\documentclass[12pt,leqno,draft]{amsart}
\usepackage{amsmath,color}
\usepackage[round]{natbib}
\textwidth 6.50in
\topmargin -0.50in
\oddsidemargin 0in
\evensidemargin 0in
\textheight 9.0in

\newcommand{\R}{{\mathbb R}}
\newcommand{\Z}{{\mathbb Z}}

\newcommand{\B}{{\mathbb B}}

\newtheorem{lemma}{Lemma}[section]

\newtheorem{theorem}[lemma]{Theorem}

\newtheorem{proposition}[lemma]{Proposition}
\newtheorem{definition}[lemma]{Definition}
\newtheorem{corollary}[lemma]{Corollary}

\newcommand{\C}{\mathbb{C}} 	 
\newcommand{\N}{\mathbb{N}} 	 
\newcommand{\Hi}{\mathbb{H}} 	 

\newcommand{\Sg}{\mathbb{S}} 	 

\newcommand{\esssup}{\operatornamewithlimits{ess\text{ }sup}}
\newcommand{\essinf}{\operatornamewithlimits{ess\text{ }inf}}

\newcommand{\Intervall}{\in (\frac{1}{2},1)}


\begin{document}
\title[Limit theorems for Hilbert space-valued linear processes]
{Limit theorems for Hilbert space-valued linear processes under long range dependence}
\author[M. D\"uker]{Marie-Christine D\"uker}
\today

\address{
Fakult\"at f\"ur Mathematik, Ruhr-Universit\"at Bo\-chum, 
44780 Bochum, Germany}
\email{Marie-Christine.Dueker@ruhr-uni-bochum.de}
\keywords{Linear processes, Long memory, functional central limit theorem, self-similarity, Hilbert space}

\thanks{Research supported by the 
Research Training Group 2131 - {\em High-dimensional Phenomena in Probability - Fluctuations and Discontinuity}.}

\begin{abstract}
Let $(X_{k})_{k \in \Z }$ be a linear process with values in a separable Hilbert space $\Hi$ given by 
$X_{k}	=\sum_{j=0}^{\infty} (j+1)^{-N}\varepsilon_{k-j}$ for each $k \in \Z$, where $N:\Hi \to \Hi$ is a bounded, linear normal operator and $(\varepsilon_{k})_{ k \in \Z }$ is a sequence of independent, identically distributed $\Hi$-valued random variables with $E\varepsilon_{0}=0$ and $E\| \varepsilon_{0} \|^2<\infty$. We investigate the central and the functional central limit theorem for $(X_{k})_{k \in \Z }$ when the series of operator norms $\sum_{j=0}^{\infty} \|(j+1)^{-N}\|_{op}$ diverges. Furthermore we show that the limit process in case of the functional central limit theorem generates an operator self-similar process.
\end{abstract}
\maketitle

\section{Introduction}
In this paper, we study long-range dependent linear processes with values in a separable Hilbert space $\Hi$.
Given a sequence of bounded linear operators $u_{j}: \Hi \to \Hi$, $j \geq 0$, and a sequence of independent, identically distributed $\Hi$-valued random variables $(\varepsilon_{k})_{ k \in \Z }$ with $E\varepsilon_{0}=0$ and $E\| \varepsilon_{0} \|^2<\infty$, we define the linear process
	\begin{equation} \label{equality_general_process}
	X_{k}=\sum_{j=0}^{\infty} u_{j}(\varepsilon_{k-j}), \hspace{0.2cm} k \in \Z.
	\end{equation}
We investigate the asymptotic distribution of the partial sums $S_{n}=\sum_{k=1}^{n} X_{k}$ and of the partial sums process $\zeta_{n}(t)=S_{\lfloor nt \rfloor} + \{nt\} X_{ \lfloor nt \rfloor +1}$ with $t \in [0,1]$, where $\lfloor \cdot \rfloor$ denotes the floor function and $\{x\}=x-\lfloor x \rfloor$.\\
The behaviour of the linear process $(X_{k})_{k \in \Z }$ depends crucially on the convergence respectively divergence of the series $\sum_{j=0}^{\infty} \|u_{j}\|_{op}$, where $\|\cdot\|_{op}$ denotes the operator norm. If $\sum_{j=0}^{\infty} \|u_{j}\|_{op}<\infty$, the process $(X_{k})_{k \in \Z }$ is short range dependent. In this case, the central limit theorem holds with the usual normalizing sequence $n^{-\frac{1}{2}}$ and the normalized partial sums converge in distribution to an $\Hi$-valued Gaussian random element (see \citet{Rackauskas2010} and \citet{Merlevede_Sharp}). We are interested in the situation when the series diverges.\\
\citet{Rack_Suquet_Op_2011} investigate a functional central limit theorem for $(X_{k})_{k \in \Z }$ as in (\ref{equality_general_process}) with values in a Hilbert space $\Hi$ when $\sum_{j=0}^{\infty} \|u_{j}\|_{op}$ diverges with $u_{0}=I$ and $u_{j}=j^{-T}$ for $j \geq 1$, where $T\in L(\Hi)$ satisfies $\frac{1}{2}I<T<I$ and is self-adjoint. Additionally they assume that the operator $T$ commutes with the covariance operator of $\varepsilon_{0}$.\\
\citet{Chara_Rack_Operator,Chara_Rack_Central} consider $(X_{k})_{k \in \Z }$ with values in the Hilbert space $L_{2}(\mu)=L_{2}(\mathbb{S},\mathcal{S},\mu)$ of square-integrable real-valued functions, where $(\mathbb{S},\mathcal{S},\mu)$ is a $\sigma$-finite measure space. They choose $u_{j}=(j+1)^{-D}$ without requiring that the operator commutes with the covariance operator of $\varepsilon_{0}$. In their case $D$ is a multiplication operator given by $Df=\{d(s)f(s) | s\in \Sg\}$ for each $f \in L_{2}(\mu)$ for a measurable function $d:\Sg \to \R$.\\
We combine both results, constructing a process with values in a complex Hilbert space $\Hi$ with inner product $\langle \cdot, \cdot \rangle$ and the corresponding norm $\| \cdot \|$, choosing 
	\begin{equation} \label{equality_choice_uj}
	u_{j}=(j+1)^{-N}
	\end{equation}
for each $j\geq0$, where $N\in L(\Hi)$ is a normal operator, i.e. $N$ commutes with its hermitian adjoint denoted by $N^{*}$, that is $NN^{*}=N^{*}N$.\\
To be more precise we give some details about operators. Let $A \in L(\Hi)$, then it is called non-negative if $\langle Ax,x \rangle \geq 0$ for all $x \in \Hi$. For an additional operator $B \in L(\Hi)$ the inequality $A\geq B$ means $A-B \geq 0$. We set $\exp(A)=\sum_{k=0}^{\infty}\frac{A^k}{k!}$ and $a^{A}=\exp(A\log a)$ for $a>0$. For further details about operators we refer to \citet{comway1994course} and \citet{Linear_operators_Hilbert_space}.\\
Our main results establish sufficient conditions for a central and a functional central limit theorem. More precisely we show convergence in distribution of $n^{-H}S_{n}$ in $\Hi$ and of $n^{-H}\zeta_{n}$ in the space $C([0,1],\Hi)$ to a Gaussian stochastic process with $H=\frac{3}{2}I-N$, where $N$ is a normal operator and $C([0,1],\Hi)$ is the Banach space of continuous functions $x:[0,1]\to \Hi$ endowed with the norm $\|x\|=\sup_{0\leq t \leq1} \|x(t)\|$.
\vspace*{0.2cm}\\
As in \citet{Chara_Rack_Operator} we get an operator self-similar process. Such processes were first introduced by \citet{Lamperti1962} and play an important role in the context of long memory. Later operator self-similar processes were studied by \citet{Laha1981}. In our case we get a self-similar process with values in a complex Hilbert space $\Hi$. With this in mind, we repeat the definition of self-similarity of Hilbert space-valued random sequences referring to \citet{Matache_Operator}.
	\begin{definition} \ \\
	A stochastic process $\{Y(t) | t \geq 0\}$ on a Hilbert space $\Hi$ is called operator self-similar, if there exists a family $\{T(a)|a>0\}\subset L(\mathbb{H})$, such that
		\[
		\{ Y(at) | t \geq 0 \}\overset{f.d.d.}{=} \{T(a)Y(t) | t \geq 0 \},
		\]
for each $a>0$, where $\overset{f.d.d.}{=}$ denotes the equality of the finite-dimensional distributions.
	\end{definition}
The set $\{T(a)|a>0\}\subset L(\mathbb{H})$ is also called scaling family of operators. If $T(a)=a^{G}I$, where $G$ is a fixed scalar and $I$ is the identity operator, the process is called self-similar.
\vspace*{0.2cm}\\
In the following section we first present our main results with sufficient conditions for the central and the functional central limit theorem for the process $(X_{k})_{k \in \Z }$ with values in a general Hilbert space $\Hi$ constructed as in (\ref{equality_general_process}) with $(u_{j})_{j \geq 0}$ given by (\ref{equality_choice_uj}). In section \ref{section_application} we give an application to a convolution operator. Next, we
present an extension of the results given in \citet{Chara_Rack_Operator, Chara_Rack_Central}, which are needed to proof our main results. Especially we consider a process $(X_{k})_{k \in \Z }$ with values in the Hilbert space $L_{2}(\mu,\C)$ of square-integrable complex-valued functions.\\
In section \ref{section_proofs_of_the_main_results} we present the proofs of our main results, including the proof of the existence of an operator self-similar process with values in a Hilbert space $\Hi$.\\
The appendix consists of the proofs of the preliminary results given in section \ref{section_preliminary_results}.

\section{Main results} \label{section_main_results}
Before presenting our main results, we need some introducing definitions and preliminary results.\\
The spectral theorem for normal operators (see \citet[chapter 9, theorem 4.6]{comway1994course}) states that it is possible to decompose each normal operator $N\in L(\Hi)$ into a unitary operator $U:\Hi \to L_{2}(\mu,\C)$ and a multiplication operator $D:L_{2}(\mu,\C) \to L_{2}(\mu,\C)$. More precisely there exist a $\sigma$-finite measure space $(\Sg,\mathcal{S},\mu)$ and a unitary operator $U: \Hi \rightarrow L^{2}(\mu,\C)$ together with a bounded function $d: \Sg \rightarrow \C$, such that
		\begin{align} \label{equality_unitary_decomposition}
		UNU^{*}=D,
		\end{align}
where $D$ is a multiplication operator given by $Df=\{d(s)f(s)| s \in \Sg \}$ for each $f \in L^{2}(\mu,\C)$ with $d:\Sg \to \C$. We denote by
	\begin{equation}
	h(s)=\frac{1}{2}(d(s)+\overline{d(s)})
	\end{equation} 
the real part of $d(s)$ and write $d(r,s)=d(r)+\overline{d(s)}$. It is well known that the so called beta function is a function of two complex numbers $a,b$ with positive real part defined by $\operatorname{Beta}(a,b)
	=\int_{0}^{1}x^{a-1}(1-x)^{b-1}dx$. It may be also written as $\operatorname{Beta}(a,b)
	=\int_{0}^{\infty}x^{a-1}(x+1)^{-(a+b)}dx$.
We define the function $c: \Sg \times \Sg \to \C $ by
	\begin{align} \label{equality_beta}
	c(r,s)
	=\operatorname{Beta}(1-d(r),d(r,s)-1)
	=\int_{0}^{\infty}x^{-d(r)}(x+1)^{-\overline{d(s)}}dx.
	\end{align}
and introduce the further notations
\begin{align*}
	\sigma_{U}(r,s)
:=	E((U\varepsilon_{0})(r)(U\varepsilon_{0})(s)),
	\hspace*{0.2cm}
	\sigma^2_{U}(s)
:=	E|(U\varepsilon_{0})(s)|^2
	\hspace*{0.2cm}
	r,s \in \Sg.
\end{align*}
If $h(s)>\frac{1}{2}$ $\mu$-almost surely, the series
	\begin{align} 
	X_{k}	=\sum_{j=0}^{\infty} (j+1)^{-N}\varepsilon_{k-j},
	\end{align}
which defines the process $(X_{k})_{k \in \Z }$, converges. We postpone the proof to section \ref{section_proofs_of_the_main_results}.\\
Now, we are ready to present the central limit theorem.
	\begin{theorem} \label{theorem_central_normal_op}
	Suppose that $h(s) \Intervall$ for each $s\in \Sg$ and that the integrals
		\begin{align*}
		\int_{\Sg} \frac{\sigma^2_{U}(s)}{(1-h(s))^{2}}\mu(ds)
		\hspace*{0.2cm} \text{ and } \hspace*{0.2cm}
		\int_{\Sg} \frac{\sigma^2_{U}(s)}{(1-h(s))(2h(s)-1)} \mu(ds)
		\end{align*}
	are finite. Then
		\begin{align*}
		n^{-H}S_{n} \overset{D}{\longrightarrow} G \hspace*{0.2cm} \text{ as } \hspace*{0.2cm} n \to \infty,
		\end{align*}
	in $\Hi$ with $H=\frac{3}{2}I-N$. The Hilbert space-valued random variable $G$ is Gaussian with covariance operator $C_{G}:
	\Hi	\rightarrow \Hi$ defined by
		\begin{align*}
		C_{G}(x)=
			U^{*} \int_{\Sg} 		
			\left(\frac{c(r,s)+\overline{c(s,r)}}{(2-d(r,s))(3-d(r,s))} \sigma_{U}(r,s)\right)
			(Ux)(s) \mu(ds).
		\end{align*}
	\end{theorem}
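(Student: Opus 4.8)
The plan is to use the spectral decomposition $UNU^{*}=D$ from \eqref{equality_unitary_decomposition} to transport the whole problem to the concrete space $L^{2}(\mu,\C)$, where the coefficient operators become multiplications, and then to establish the central limit theorem there via the extension of \citet{Chara_Rack_Operator,Chara_Rack_Central} prepared in section~\ref{section_preliminary_results}. Since $U$ is unitary one has $U(j+1)^{-N}U^{*}=(j+1)^{-D}$ and $Un^{-H}U^{*}=n^{-(\frac{3}{2}I-D)}$, so with the i.i.d.\ centered innovations $\eta_{k}:=U\varepsilon_{k}$ in $L^{2}(\mu,\C)$ we obtain
\[
U(n^{-H}S_{n})=n^{-(\frac{3}{2}I-D)}\sum_{k=1}^{n}Y_{k},
\qquad Y_{k}=\sum_{j=0}^{\infty}(j+1)^{-D}\eta_{k-j}.
\]
Because $U$ and $U^{*}$ are continuous, $n^{-H}S_{n}\claw G$ in $\Hi$ is equivalent to convergence of the right-hand side to $UG$ in $L^{2}(\mu,\C)$, with covariance operators related by $C_{UG}=UC_{G}U^{*}$; the latter is exactly the integral operator on $L^{2}(\mu,\C)$ with kernel $K(r,s)=\frac{c(r,s)+\overline{c(s,r)}}{(2-d(r,s))(3-d(r,s))}\sigma_{U}(r,s)$. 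Hence it suffices to prove the central limit theorem for $(Y_{k})$ with this limit covariance.

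Next I would rewrite the normalized sum as a sum of independent summands. Reindexing by the innovation, $\sum_{k=1}^{n}Y_{k}=\sum_{i\leq n}b_{n,i}\eta_{i}$, where $b_{n,i}$ is multiplication by $\beta_{n,i}(s)=\sum_{m}(m+1)^{-d(s)}$ over the range of $m$ determined by $i$ (namely $\sum_{m=0}^{n-i}$ for $1\leq i\leq n$ and $\sum_{m=1-i}^{n-i}$ for $i\leq 0$). Thus $n^{-(\frac{3}{2}I-D)}\sum_{i\leq n}b_{n,i}\eta_{i}$ is a triangular array of independent, mean-zero $L^{2}(\mu,\C)$-valued elements whose covariance operator is the integral operator with kernel
\[
K_{n}(r,s)=\sigma_{U}(r,s)\,n^{-3+d(r,s)}\sum_{i\leq n}\beta_{n,i}(r)\overline{\beta_{n,i}(s)}.
\]
For such an array the Gaussian limit follows from two ingredients: convergence of the covariance operators $C_{n}\to C$, and a Lindeberg-type asymptotic negligibility condition for the summands.

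The heart of the proof, and the step I expect to be hardest, is the covariance computation
\[
\lim_{n\to\infty}n^{-3+d(r,s)}\sum_{i\leq n}\beta_{n,i}(r)\overline{\beta_{n,i}(s)}
=\frac{c(r,s)+\overline{c(s,r)}}{(2-d(r,s))(3-d(r,s))}.
\]
I would split the sum into the within-sample range $1\leq i\leq n$ and the past $i\leq 0$, approximate each partial sum $\beta_{n,i}$ by the corresponding integral (legitimate since $\operatorname{Re}(1-d(s))=1-h(s)\in(0,\tfrac{1}{2})$), and rescale the index by $n$ to turn the $i$-sum into a Riemann integral. The within-sample range contributes $\frac{1}{(1-d(r))(1-\overline{d(s)})(3-d(r,s))}$, and the past contributes
\[
\frac{1}{(1-d(r))(1-\overline{d(s)})}\int_{0}^{\infty}\bigl[(1+v)^{1-d(r)}-v^{1-d(r)}\bigr]\bigl[(1+v)^{1-\overline{d(s)}}-v^{1-\overline{d(s)}}\bigr]\,dv;
\]
recognizing this integral through the beta functions $c(r,s)=\operatorname{Beta}(1-d(r),d(r,s)-1)$ and $\overline{c(s,r)}$ and combining the two pieces yields the claimed kernel. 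The delicate points are controlling the approximation error uniformly in $i$ and passing to the limit under the outer integral $\int_{\Sg}\cdots\mu(ds)$, and this is exactly where the hypotheses enter: the second integrability condition guarantees that the limit kernel has finite trace $\int_{\Sg}K(s,s)\mu(ds)<\infty$, so that $C$ is a genuine trace-class covariance operator, while the first condition provides the uniform bound $\sup_{n}\int_{\Sg}K_{n}(s,s)\mu(ds)<\infty$, bounded by a constant times $\int_{\Sg}\frac{\sigma_{U}^{2}(s)}{(1-h(s))^{2}}\mu(ds)<\infty$, needed for tightness and for dominated convergence.

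Finally I would verify the negligibility condition to exclude a non-Gaussian limit. Since the multipliers $n^{-\frac{3}{2}+d(s)}\beta_{n,i}(s)$ are uniformly $o(1)$ and $E\|\eta_{0}\|^{2}<\infty$, a standard truncation argument yields the Lindeberg condition for the array. Combined with the covariance convergence this gives $n^{-(\frac{3}{2}I-D)}\sum_{i\leq n}b_{n,i}\eta_{i}\claw UG$ with $UG$ Gaussian of covariance $C$; applying $U^{*}$ returns the assertion $n^{-H}S_{n}\claw G$ and identifies $G$ as the Gaussian element with covariance operator $C_{G}$.
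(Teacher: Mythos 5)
Your proposal is correct in its overall architecture and its first reduction is exactly the paper's: conjugate by the unitary $U$ from the spectral theorem, work with $Z_k=\sum_j(j+1)^{-D}(U\varepsilon_{k-j})$ in $L_2(\mu,\C)$, and pull the limit back with $U^*$ via the continuous mapping theorem, with $C_{UG}=UC_GU^*$ identifying the covariance operator. Where you genuinely diverge is in how the $L_2(\mu,\C)$-level limit theorem is proved. The paper outsources it to Theorem \ref{theorem_complex_multiplication_operator_central_limit}, whose proof runs through the Cremers--Kadelka criterion (Theorem \ref{theorem_weak convergence_Cremers_Kadelka}): pointwise-in-$s$ finite-dimensional convergence obtained by the Ra\v{c}kauskas--Suquet Gaussian-comparison Lemma \ref{Lemma_Bedingungen_Operator}, second-moment convergence from Lemma \ref{Lemma_complex_operator_limit_E(SnSn)} (computed via the Laplace-transform representation $s^{-a}=\Gamma(a)^{-1}\int_0^\infty t^{a-1}e^{-st}dt$ and the generalized geometric sum), and an integrable majorant (Lemma \ref{lemma_dominated convergence_integrable function_central_limt}). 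You instead prove a direct triangular-array CLT in $L_2(\mu,\C)$ and compute the covariance limit by Euler--Maclaurin/Riemann-sum asymptotics of $\sum_i\beta_{n,i}(r)\overline{\beta_{n,i}(s)}$; since $E(S_n(r)\overline{S_n(s)})=\sigma_U(r,s)\sum_i\beta_{n,i}(r)\overline{\beta_{n,i}(s)}$, your target limit is literally the paper's Lemma \ref{Lemma_complex_operator_limit_E(SnSn)} by another computation, and the split into the in-sample and past contributions with the beta-function identity is the standard alternative derivation. What the paper's route buys is that every hard condition only has to be checked pointwise in $s$ plus one domination estimate; what your route buys is a self-contained argument without the $\varrho_3$-metric machinery.

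Two places in your sketch need more than you give them. First, for row-wise independent Hilbert-space arrays, weak convergence of covariance operators plus Lindeberg is \emph{not} sufficient for the CLT; one also needs trace convergence $\int_{\Sg}K_n(s,s)\mu(ds)\to\int_{\Sg}K(s,s)\mu(ds)$ (flat concentration), not merely the uniform bound $\sup_n\int_{\Sg}K_n(s,s)\mu(ds)<\infty$ you invoke. This is recoverable: the pointwise limit of $K_n(s,s)$ together with the integrable majorant $g(s)=2\bigl(\tfrac{\sigma_U^2(s)}{(1-h(s))^2}+\tfrac{\sigma_U^2(s)}{(1-h(s))(2h(s)-1)}\bigr)$ (the paper's Lemma \ref{lemma_dominated convergence_integrable function_central_limt}) gives trace convergence by dominated convergence, so you should say that explicitly. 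Second, the Lindeberg verification is not a one-liner: the natural bound $\sup_i\esssup_s|n^{-3/2+d(s)}\beta_{n,i}(s)|\lesssim n^{-1/2}\esssup_s(1-h(s))^{-1}$ is useless when $\esssup_s h(s)=1$, which the theorem does not exclude (only Theorem \ref{theorem_functional_central_normal_op} with $p=2$ assumes $\bar h<1$). You must instead keep the $s$-dependence inside the $\mu$-integral, bound $\|X_{ni}\|^2\le n^{-1}\int_{\Sg}|(U\varepsilon_i)(s)|^2(1-h(s))^{-2}\mu(ds)$, and use the two integrability hypotheses with dominated convergence to kill the truncated sum; with that repair the argument closes.
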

Before we continue with the functional central limit theorem we need some further preliminaries.
We define the Gaussian stochastic process $\mathcal{G}=\{\mathcal{G}(t)| t \in \R_{\geq 0} \}$ with the help of its covariance operator $C_{\mathcal{G}}: \Hi \rightarrow \Hi$ given by
		\begin{align*}
		C_{\mathcal{G}}(x) = U^{*} \int_{\Sg} 		V_{U}((r,t)(s,u)) (Ux)(s) \mu(ds),
		\end{align*}
where
		\begin{align*}
		&V_{U}((r,t),(s,u))\\
		=&\frac{\sigma_{U}(r,s)}{(2-d(r,s))(3-d(r,s))}
			[ \overline{c(s,r)}t^{3-d(r,s)}+c(r,s) u^{3-d(r,s)}-C(r,s;t-u) \vert t-u \vert ^{3-d(r,s)} ]
		\end{align*}
and
\begin{align*}
	C(r,s;t)=
	\begin{cases}
		c(r,s)					& \text{, if } t<0 \vspace*{0.17cm}\\
		\overline{c(s,r)}	& \text{, if } t>0
	\end{cases}.
\end{align*}
	\begin{theorem} \label{theorem_functional_central_normal_op}
	Suppose that $h(s) \Intervall$ for each $s\in \Sg$, the integrals
		\begin{align*}
		E\left( \int_{\Sg} \frac{|(U\varepsilon_{0})(s)|^{2}}{(1-h(s))^2} \mu(ds)\right) ^{\frac{p}{2}}
		\hspace*{0.2cm} \text{ and } \hspace*{0.2cm}
		\int_{\Sg} \frac{\sigma_{U}^2(v)}{(1-h(s))(2h(s)-1)} \mu(ds)
		\end{align*}
	are finite and $p=2$ and $\bar{h}=\esssup_{s \in \Sg} h(s)<1$ or $p>2$. Then
	\begin{align*}
	n^{-H}\zeta_{n}\overset{D}{\rightarrow} G 
	\hspace*{0.2cm} \text{ as } \hspace*{0.2cm} 
	n\rightarrow \infty
	\end{align*}
in $C([0,1], \Hi)$ with $H=\frac{3}{2}I-N$. The process $G=\{G(t)| t \in [0,1]\}$ is the restriction of $\mathcal{G}$ to the unit interval.
	\end{theorem}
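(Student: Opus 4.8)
The plan is to prove a functional central limit theorem, so the standard strategy splits into two parts: \textbf{(i)} convergence of the finite-dimensional distributions of $n^{-H}\zeta_n$ to those of the Gaussian process $G$, and \textbf{(ii)} tightness of the sequence $(n^{-H}\zeta_n)_n$ in $C([0,1],\Hi)$. Together these yield weak convergence in $C([0,1],\Hi)$ by the standard characterization of weak convergence in spaces of continuous functions.

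For the finite-dimensional distributions, I would first reduce to the scalar/multiplication-operator setting via the spectral theorem: applying the unitary $U$ turns $n^{-H}\zeta_n$ into a process in $L_2(\mu,\C)$ where $N$ acts as the multiplication operator $D$, so that $(j+1)^{-N}$ becomes pointwise multiplication by $(j+1)^{-d(s)}$. This is exactly the situation treated in the preliminary results extending \citet{Chara_Rack_Operator,Chara_Rack_Central} to complex-valued $L_2$ functions, which I would invoke directly. Fixing times $0\le t_1<\dots<t_m\le 1$ and coefficients, I would compute the covariance structure of the linear combinations $\sum_i \langle n^{-H}\zeta_n(t_i), x_i\rangle$, using the representation $\zeta_n(t)=S_{\lfloor nt\rfloor}+\{nt\}X_{\lfloor nt\rfloor+1}$; the contribution of the fractional correction term $\{nt\}X_{\lfloor nt\rfloor+1}$ is negligible after normalization. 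The limiting covariance is identified through the Beta-function integrals defining $c(r,s)$ in (\ref{equality_beta}), yielding precisely $V_U((r,t),(s,u))$ and hence the covariance operator $C_{\mathcal G}$; asymptotic normality of the finite linear combinations then follows from a Lindeberg-type CLT for the triangular array of $\varepsilon_k$'s, exactly as in the proof of Theorem \ref{theorem_central_normal_op}.

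The main obstacle is tightness, and this is where the distinction between the cases $p=2$ with $\bar h<1$ and $p>2$ enters. The plan is to establish a moment bound of the form $E\|n^{-H}(\zeta_n(t)-\zeta_n(s))\|^{p}\le C\,|t-s|^{\gamma}$ for some $\gamma>1$, which gives tightness in $C([0,1],\Hi)$ by a Kolmogorov-type continuity criterion. After passing through $U$, the increment $\zeta_n(t)-\zeta_n(s)$ is an increment of a partial-sum process of the linear filter; I would bound its $L_2(\mu,\C)$-norm by controlling $\sum_{j}\|(j+1)^{-N}\|$-type quantities on the relevant block of indices $\lfloor ns\rfloor<k\le\lfloor nt\rfloor$, and then raise to the $p/2$ power using the finiteness of $E\big(\int_{\Sg}|(U\varepsilon_0)(s)|^2(1-h(s))^{-2}\mu(ds)\big)^{p/2}$. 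When $p>2$ the exponent $\gamma$ exceeds $1$ automatically; when $p=2$ one only gets $\gamma=1$, which is insufficient, so the extra hypothesis $\bar h=\esssup_s h(s)<1$ is needed to sharpen the increment bound (it guarantees a uniform lower bound $3-2\bar h>1$ on the self-similarity exponent across the spectrum) and to recover tightness via a maximal-inequality argument on dyadic blocks.

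Finally, having verified (i) and (ii), weak convergence of $n^{-H}\zeta_n$ to $G$ in $C([0,1],\Hi)$ follows, and since the finite-dimensional distributions match those of $\mathcal G$ restricted to $[0,1]$, the limit is identified as $G=\{\mathcal G(t)\mid t\in[0,1]\}$. I would close by remarking that the continuity of the limit process, needed for the limit to live in $C([0,1],\Hi)$ rather than merely in a Skorokhod space, is itself a consequence of the increment moment bound established for tightness.
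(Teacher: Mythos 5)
Your proposal follows essentially the same route as the paper: reduce to the multiplication-operator setting in $L_{2}(\mu,\C)$ via the spectral decomposition $N=U^{*}DU$, invoke the preliminary functional central limit theorem for that case, obtain the finite-dimensional convergence by pushing the limit through the continuous (unitary) map $U^{*}$, and establish tightness from the increment moment bound $E\|n^{-H}(\zeta_{n}(t)-\zeta_{n}(u))\|^{p}\leq C|t-u|^{(3-2\bar h)p/2}$, with the case split $p=2,\ \bar h<1$ versus $p>2$ playing exactly the role you describe. The only cosmetic difference is that the paper transfers the tightness bound verbatim through the isometry of $U^{*}$ rather than re-deriving it, and packages the criterion as a Billingsley-type proposition rather than a Kolmogorov continuity argument; these are equivalent.
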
	


\section{Application: Convolution operator} \label{section_application}
An example of a normal operator is the so called convolution operator $F: L_{2}(\R) 	\rightarrow 	L_{2}(\R)$ defined by
	\begin{align}
	F(f)= \int_{\R} K(x-y)f(y)dy=:K*f(x)
	\end{align}
with $K \in L_{1}(\R) \cap L_{2}(\R)$, where $L_{p}(\R) := \{f:\R \to \C | f \text{ measurable and } \int_{\R} |f(x)|^p dx < \infty\}$. It should be noted that the operator is self-adjoint, if the kernel function $K$ is hermitian, i.e. if $K(-x)=\overline{K(x)}$.\\
The Fourier transform defined by $(\mathcal{F}g)(s)=\frac{1}{\sqrt{2\pi}} \int_{\R} g(t) e^{-ist}dt$ with $s \in \R$ factorizes the convolution, i.e. $ \mathcal{F}(K*f)=\sqrt{2\pi}\mathcal{F}(K) \cdot \mathcal{F}(f) $. Defining the multiplication operator $D:	L_{2}(\R) \to 	L_{2}(\R), f\mapsto d \cdot f$ by $d(s)=\sqrt{2\pi} (\mathcal{F}K)(s)$, we obtain the spectral decomposition in (\ref{equality_unitary_decomposition}) of a convolution operator $F$, given by $\mathcal{F} F \mathcal{F}^{-1}=D$.\\
Choosing $u_{j}=(j+1)^{-F}$ in (\ref{equality_general_process}) the process $(X_{k})_{ k \in \Z }$ is defined by
	\begin{align*}
	X_{k}	= \sum_{j=0}^{\infty} (j+1)^{-\mathcal{F}^{-1}D\mathcal{F}}\varepsilon_{k-j}.
	\end{align*}
To formulate the central limit theorem for the above process, we maintain the notations introduced in the previous section
\begin{align*}
	\sigma_{\mathcal{F}}(r,s)
:=	E((\mathcal{F}\varepsilon_{0})(r)(\mathcal{F}\varepsilon_{0})(s)),
	\hspace*{0.2cm}
	\sigma^2_{\mathcal{F}}(s)
:=	E|(\mathcal{F}\varepsilon_{0})(s)|^2
	\hspace*{0.2cm}
	r,s \in \Sg
\end{align*}
and get the following corollary.
\begin{corollary}
Suppose that the real part of $\sqrt{2\pi}\mathcal{F}K(s)$ takes values in the interval $(\frac{1}{2},1)$ for each $s\in \Sg$ and that the integrals
		\begin{align*}
		\int_{\R} \frac{\sigma^2_{\mathcal{F}}(s)}{(1-h(s))^{2}}ds
		\hspace*{0.2cm} \text{ and } \hspace*{0.2cm}
		\int_{\R} \frac{\sigma^2_{\mathcal{F}}(s)}{(1-h(s))(2h(s)-1)} ds
		\end{align*}
	are finite. Then
		\begin{align*}
		n^{-H}S_{n} \overset{D}{\longrightarrow} G \hspace*{0.2cm} \text{ as } \hspace*{0.2cm} n \to \infty,
		\end{align*}
	in $L_{2}(\R)$ with $H=\frac{3}{2}I-F$. The $L_{2}(\R)$-valued random variable $G$ is Gaussian with covariance operator $C_{G}:
	L_{2}(\R)	\rightarrow L_{2}(\R)$ defined by
		\begin{align*}
			C_{G}(x)
		=	&
			\mathcal{F}^{-1} \int_{\R} 		
			\left(\frac{c(r,s)+\overline{c(s,r)}}{(2-d(r,s))(3-d(r,s))} \sigma_{\mathcal{F}}(r,s)\right)
			(\mathcal{F}x)(s) ds\\
		=	&
			\int_{\R} \int_{\R} 	\int_{\R}
			\frac{ \int_{0}^{\infty}x^{-\sqrt{2\pi}\mathcal{F}K(t_{1})}(x+1)^{-\sqrt{2\pi}\mathcal{F}^{-1}\overline{K(s)}}+x^{-\sqrt{2\pi}\mathcal{F}^{-1}\overline{K(s)}}(x+1)^{-\sqrt{2\pi}\mathcal{F}K(t_{1})}dx 
			}
			 {(2-\sqrt{2\pi}(\mathcal{F}K(t_{1})+\mathcal{F}^{-1}\overline{K(s)})(3-\sqrt{2\pi}(\mathcal{F}K(t_{1})+\mathcal{F}^{-1}\overline{K(s)})} 
			\\&			 
			 \sigma_{\mathcal{F}}(t_{1},s)
			 e^{-i(st_{2}-rt_{1})} x(t_{2}) dt_{2}   dt_{1} ds .
		\end{align*}
\end{corollary}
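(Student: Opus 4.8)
The plan is to read this corollary as a direct specialization of Theorem \ref{theorem_central_normal_op} in which the Fourier transform plays the role of the unitary operator $U$ appearing in the spectral decomposition (\ref{equality_unitary_decomposition}). Nothing new has to be proved about the limit theorem itself; the work consists of (i) checking that the convolution operator $F$ is normal so that the theorem applies, (ii) identifying the spectral data $(\Sg,\mu,U,d)$, (iii) translating the hypotheses, and (iv) unfolding the abstract covariance operator into the explicit kernel form. It is worth stressing that $F$ need not be self-adjoint (it is only when $K$ is hermitian), so this is genuinely a normal, non-self-adjoint instance, which is exactly the generality that Theorem \ref{theorem_central_normal_op} was designed to cover.

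First I would verify normality. The adjoint $F^{*}$ is again a convolution operator, with kernel $\widetilde K(x)=\overline{K(-x)}$, and since convolution is commutative we get $FF^{*}=F^{*}F$; hence $F$ is normal and Theorem \ref{theorem_central_normal_op} is in force. Next I would fix the spectral data. By Plancherel's theorem $\mathcal{F}:L_{2}(\R)\to L_{2}(\R)$ is unitary, and the factorization $\mathcal{F}(K*f)=\sqrt{2\pi}\,\mathcal{F}(K)\cdot\mathcal{F}(f)$ yields $\mathcal{F}F\mathcal{F}^{-1}=D$, where $D$ is multiplication by $d(s)=\sqrt{2\pi}(\mathcal{F}K)(s)$. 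Because $K\in L_{1}(\R)$, the Riemann--Lebesgue lemma shows $\mathcal{F}K$ is bounded, so $d$ is a bounded function as required. Thus the abstract triple of (\ref{equality_unitary_decomposition}) is realized here with measure space $(\R,\text{Lebesgue})$, unitary $U=\mathcal{F}$, and multiplier $d$, giving $U^{*}=\mathcal{F}^{-1}$.

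With these identifications the hypotheses match verbatim: $h(s)=\operatorname{Re} d(s)$ is the real part of $\sqrt{2\pi}(\mathcal{F}K)(s)$, the condition $h(s)\in(\tfrac12,1)$ is precisely the assumed one, the quantities $\sigma_{U}(r,s)$ and $\sigma^{2}_{U}(s)$ become $\sigma_{\mathcal{F}}(r,s)$ and $\sigma^{2}_{\mathcal{F}}(s)$, and the two finiteness conditions on the integrals coincide with those imposed in the corollary. Therefore Theorem \ref{theorem_central_normal_op} gives at once $n^{-H}S_{n}\overset{D}{\to}G$ in $L_{2}(\R)$ with $H=\tfrac32 I-F$, and the first displayed form of $C_{G}$ is nothing but the covariance operator of Theorem \ref{theorem_central_normal_op} with $U$ replaced by $\mathcal{F}$.

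Finally, the second, fully explicit expression for $C_{G}$ is obtained by unfolding the abstract one. Here I would insert the integral representation (\ref{equality_beta}) for $c(r,s)$ and $\overline{c(s,r)}$, write the outer application of $U^{*}=\mathcal{F}^{-1}$ and the inner $(Ux)(s)=(\mathcal{F}x)(s)$ as explicit integrals against the exponential kernels, and collect the resulting integration variables. I do not expect a genuine obstacle in this corollary: the only delicate point is the bookkeeping in the last step --- tracking the several integration variables and the two conjugated Fourier exponentials $e^{-i(st_{2}-rt_{1})}$ while substituting the beta integrals --- and all the substantive content is already carried by Theorem \ref{theorem_central_normal_op}.
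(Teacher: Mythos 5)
Your proposal is correct and follows exactly the route the paper intends: the paper gives no separate proof of this corollary, but the preceding discussion (normality of $F$, the factorization $\mathcal{F}F\mathcal{F}^{-1}=D$ with $d(s)=\sqrt{2\pi}(\mathcal{F}K)(s)$, and $U=\mathcal{F}$ unitary by Plancherel) is precisely the specialization of Theorem \ref{theorem_central_normal_op} that you carry out. Your explicit verification of normality via the commutativity of convolution and the boundedness of $d$ via Riemann--Lebesgue are small additions the paper leaves implicit, but the argument is the same.
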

For an explicit example we take the kernel function $K(x)=e^{-a|x|}+\frac{\delta(x)}{2}$ with $a>0$, where $\delta(x)$ denotes the Dirac delta function. So the convolution operator $F: L_{2}(\R) 	\rightarrow 	L_{2}(\R)	$ is defined by
	\begin{align*}
		F(f)(x)
	=	\int_{\R} (e^{-a|x-y|}+\delta(x-y))f(y)dy
	=	\int_{\R} e^{-a|x-y|}f(y)dy+f(x).
	\end{align*}
The Fourier transform equals $\mathcal{F}K(s)=\frac{1}{\sqrt{2\pi}}(\frac{2a}{a^2+s^2}+\frac{1}{2})$ and takes values in the interval $(\frac{1}{2\sqrt{2\pi}},\frac{1}{\sqrt{2\pi}}(\frac{2}{a}+\frac{1}{2}))$. So $h(s)=\sqrt{2\pi}\mathcal{F}K(s) \Intervall$, if $a>4$, and the first assumption of the above corollary is fulfilled.

\section{Preliminary results} \label{section_preliminary_results}
\citet{Chara_Rack_Central,Chara_Rack_Operator} investigate a central and a functional central limit theorem for a linear process $(X_{k})_{k \in \Z }$ in form of (\ref{equality_general_process}) with values in the real Hilbert space $L_{2}(\mu)$ of square-integrable real-valued functions.
We extend their result to the complex Hilbert space of square-integrable complex-valued functions denoted by $L_{2}(\mu,\C)$ with inner product
	\begin{align*}
	\langle f,g \rangle = \int_{\Sg} f(s)\overline{g(s)} \mu(ds), \hspace*{0.2cm} f,g \in L_{2}(\mu,\C).
	\end{align*}
With this in mind, we choose a complex-valued multiplication operator defined by \\
$Df=\{d(s)f(s)|s \in \Sg \}$ for each $f \in L_{2}(\mu,\C)$ with $d:\Sg \to \C$ and consider the process (\ref{equality_general_process}) with 
	\begin{align} \label{equality_uj_komplex_op}
	u_{j}=(j+1)^{-D}.
	\end{align}
We denote $h(s)=\frac{1}{2}(d(s)+\overline{d(s)})$ and introduce the notations
	\begin{align*}
	\sigma(r,s):=E(\varepsilon_{0}(r)\overline{\varepsilon_{0}(s)}) , 
	\hspace*{0.2cm} 
	\sigma^2(s):=E|\varepsilon_{0}(s)|^2,
	\hspace*{0.2cm}
	r,s \in \Sg.
	\end{align*}
We start with the central limit theorem.

\begin{theorem} \label{theorem_complex_multiplication_operator_central_limit}
Suppose that $h(s) \Intervall$ for each $s\in \Sg$ and that the integrals
	\begin{align*}
	\int_{\Sg} \frac{\sigma^2(s)}{(1-h(s))^{2}}\mu(ds)
	\hspace{0.2cm} \text{ and } \hspace{0.2cm} 
	\int_{\Sg}\frac{\sigma^2(s)}{(1-h(s))(2h(s)-1)}\mu(ds)
	\end{align*}
are finite. Then
	\begin{align*}
	n^{-H}S_{n} \overset{D}{\rightarrow} G \hspace*{0.2cm} \text{ as } \hspace*{0.2cm} n \to \infty,
	\end{align*}
in $L_{2}(\mu,\C)$ with $H=\frac{3}{2}I-D$.
The $L_{2}(\mu,\C)$-valued random variable $G$ is Gaussian with zero mean and covariance
	\begin{align*}
	E(G(r)\overline{G(s)})=\frac{c(r,s)+\overline{c(s,r)}}{(3-d(r,s))(2-d(r,s))} \sigma(r,s),
	\end{align*}
with $c(r,s)=\operatorname{Beta}(1-d(r),d(r,s)-1)$, see (\ref{equality_beta}).
\end{theorem}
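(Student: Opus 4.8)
\emph{Strategy.} The plan is to exploit the fact that $D$, and hence the normalising operator $n^{-H}=n^{-(\frac32 I-D)}$, is a multiplication operator, which turns the abstract partial sum into a pointwise object. Writing $X_k(s)=\sum_{m\le k}(k-m+1)^{-d(s)}\varepsilon_m(s)$ and interchanging the finite sum over $k$ with the sum over the innovations yields
	\begin{align*}
	S_n(s)=\sum_{m\le n} b_{n,m}(s)\,\varepsilon_m(s),\qquad b_{n,m}(s)=\sum_{k=\max(1,m)}^{n}(k-m+1)^{-d(s)}.
	\end{align*}
Consequently, for every $x\in L_2(\mu,\C)$,
	\begin{align*}
	\langle n^{-H}S_n,x\rangle=\sum_{m\le n}Z_{n,m}(x),\qquad Z_{n,m}(x)=\int_{\Sg} n^{-(\frac32-d(s))}b_{n,m}(s)\,\varepsilon_m(s)\,\overline{x(s)}\,\mu(ds),
	\end{align*}
and, crucially, the summands $\{Z_{n,m}(x)\}_{m\le n}$ are independent and centred because they depend on disjoint innovations. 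I would therefore reduce the $L_2(\mu,\C)$-valued convergence to two ingredients: convergence of all finite-dimensional distributions $(\langle n^{-H}S_n,x_1\rangle,\dots,\langle n^{-H}S_n,x_k\rangle)$, to be handled by a Lindeberg--Feller central limit theorem for this row-wise independent triangular array, and tightness in $L_2(\mu,\C)$.

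\emph{Covariance asymptotics.} The analytic heart is the limit of the covariance. Using independence of the $\varepsilon_m$ and $E(\varepsilon_m(r)\overline{\varepsilon_m(s)})=\sigma(r,s)$, everything reduces to showing, for each fixed pair $(r,s)$,
	\begin{align*}
	n^{-(3-d(r,s))}\sum_{m\le n} b_{n,m}(r)\,\overline{b_{n,m}(s)}\;\longrightarrow\;\frac{c(r,s)+\overline{c(s,r)}}{(3-d(r,s))(2-d(r,s))},
	\end{align*}
where the exponent matches because $-(\frac32-d(r))-(\frac32-\overline{d(s)})=-(3-d(r,s))$. I would establish this by approximating $b_{n,m}(s)$ by $\int_0^{\,n-m}(1+u)^{-d(s)}\,du$, which is of order $(n-m)^{1-d(s)}/(1-d(s))$, turning the sum over $m$ into a Riemann sum for an integral over a scaled index; summing over all $m\le n$, that is over innovations inside the window $1\le m\le n$ and in the past $m\le 0$, and rescaling produces in the limit the symmetric combination of the beta integrals $c(r,s)=\operatorname{Beta}(1-d(r),d(r,s)-1)$ and $\overline{c(s,r)}$ via the representation in (\ref{equality_beta}). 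The two finiteness hypotheses are exactly what is needed to pass this pointwise convergence through the outer $\mu(ds)$-integration, via dominated convergence, using the bound $|b_{n,m}(s)|\le C\,n^{1-h(s)}/(1-h(s))$ (note $|1-d(s)|\ge 1-h(s)$).

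\emph{Finite-dimensional convergence and tightness.} With the variance $\Var(\langle n^{-H}S_n,x\rangle)\to\langle C_Gx,x\rangle$ identified, finite-dimensional convergence follows from the Lindeberg--Feller theorem applied to $\sum_{m\le n}Z_{n,m}(x)$, tracking real and imaginary parts jointly so that the complex Gaussian law is determined; the Lindeberg condition is verified from $E\|\varepsilon_0\|^2<\infty$ together with the uniform smallness estimate $\sup_m\bigl|n^{-(\frac32-d(s))}b_{n,m}(s)\bigr|\le C\,n^{-1/2}$, and the multivariate case reduces to this by the Cram\'er--Wold device. For tightness in the separable space $L_2(\mu,\C)$ I would fix a complete orthonormal system $(e_i)$ and show $\lim_{N\to\infty}\limsup_{n\to\infty}\sum_{i>N}E|\langle n^{-H}S_n,e_i\rangle|^2=0$. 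Since each coordinate converges, $E|\langle n^{-H}S_n,e_i\rangle|^2\to\langle C_Ge_i,e_i\rangle$, while the total $\sum_i E|\langle n^{-H}S_n,e_i\rangle|^2=E\|n^{-H}S_n\|^2$ converges to $\operatorname{trace}(C_G)$ by the diagonal case $r=s$ of the covariance limit; as the limiting trace integrand behaves like $\sigma^2(s)/((1-h(s))(2h(s)-1))$, it is finite by the second hypothesis, and convergence of nonnegative summands with convergent finite total forces the tails to be uniformly negligible. Combining finite-dimensional convergence with tightness gives weak convergence of $n^{-H}S_n$ to the centred Gaussian $G$ with the stated covariance.

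\emph{Main obstacle.} The step I expect to be the most delicate is the uniform control in the covariance asymptotics. The Riemann-sum approximation of $\sum_{m} b_{n,m}(r)\overline{b_{n,m}(s)}$ degenerates near the endpoint $m\approx n$, and the constants produced blow up at both ends of the admissible range of $h$: the beta factor $c(s,s)$ grows like $1/(2h(s)-1)$ as $h(s)\downarrow\frac12$ and like $1/(1-h(s))$ as $h(s)\uparrow1$, so the diagonal integrand behaves like $\sigma^2(s)/((1-h(s))(2h(s)-1))$ near $\frac12$ and like $\sigma^2(s)/(1-h(s))^2$ near $1$. The two integral hypotheses are precisely the integrability needed in these two boundary regimes, and obtaining a dominating function uniform in $n$ is the point at which the complex exponents $d(s)$, rather than the real exponents of \citet{Chara_Rack_Central,Chara_Rack_Operator}, force genuinely new estimates for $\int_0^\infty x^{-d(r)}(x+1)^{-\overline{d(s)}}\,dx$.
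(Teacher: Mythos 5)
Your proposal is correct in outline, but it follows a genuinely different route from the paper's. The paper does not argue through continuous linear functionals and an orthonormal-basis tightness criterion at all: it invokes the Cremers--Kadelka theorem (Theorem \ref{theorem_weak convergence_Cremers_Kadelka}), which upgrades (I) convergence of the \emph{pointwise-in-$s$} finite-dimensional distributions $(n^{-H}S_n(s_1),\dots,n^{-H}S_n(s_p))$, (II) convergence of $E|n^{-H}S_n(s)|^2$ for each $s$, and (III) a $\mu$-integrable dominating function for $E|n^{-H}S_n(s)|^2$ (Lemma \ref{lemma_dominated convergence_integrable function_central_limt}) directly into weak convergence in $L_2(\mu,\C)$; the asymptotic normality of the pointwise vectors is obtained not by Lindeberg--Feller but by the Ra\v{c}kauskas--Suquet comparison lemma (Lemma \ref{Lemma_Bedingungen_Operator}), which replaces the innovations by Gaussian ones at the cost of verifying $\sup_j\|A_{nj}\|_{op}\to 0$ and $\limsup_n\sum_j\|A_{nj}\|_{op}^2<\infty$ in the metric $\varrho_3$. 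Your substitutes --- Cram\'er--Wold plus Lindeberg--Feller for the row-wise independent array $Z_{n,m}(x)$, and flat concentration $\lim_N\limsup_n\sum_{i>N}E|\langle n^{-H}S_n,e_i\rangle|^2=0$ deduced from coordinatewise convergence together with convergence of the trace --- are classical and sound, and they avoid introducing the $\varrho_3$ machinery, at the price of having to verify the Lindeberg condition over the infinite index range $m\le n$ (your uniform bound $Cn^{-1/2}$ handles this only in combination with the finiteness of $\sum_m E|Z_{n,m}(x)|^2$, which you do have). For the covariance asymptotics the paper also proceeds differently: rather than a Riemann-sum approximation of $b_{n,m}(s)$ by $\int_0^{n-m}(1+u)^{-d(s)}du$, it inserts the Laplace-transform identity (\ref{equality_Laplace_transform}) into the autocovariances, evaluates the inner sums exactly via the generalized geometric sum (\ref{equality_generalized_geometrical_series}), rescales, and applies dominated convergence; this yields the Beta-function constants in closed form and sidesteps the endpoint degeneracy near $m\approx n$ that you correctly flag as the delicate point of the Riemann-sum route. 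Both approaches consume the two integrability hypotheses in exactly the same way, namely through the bound $c_h(s)\le \frac{1}{1-h(s)}+\frac{1}{2h(s)-1}$ of (\ref{inequality_c(s)_in_dependence_of_h}) and the resulting dominating function $g(s)$; note also that the ``genuinely new'' complex-exponent estimates you anticipate largely collapse to the real case, since $|x^{-d(r)}(x+1)^{-\overline{d(s)}}|=x^{-h(r)}(x+1)^{-h(s)}$.
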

The proof of this theorem may be found in the appendix.\\
Before continuing with the functional central limit theorem, we introduce the function $V: \mathbb{T} \times \mathbb{T}	\rightarrow 	\C$ with $\mathbb{T}=\Sg \times \R_{\geq 0}$ and
	\begin{align} \label{equality_function_V}
	&V((r,t),(s,u))  \\
	=&\frac{\sigma(r,s)}{(3-d(r,s))(2-d(r,s))} [\overline{c(s,r)}t^{3-d(r,s)}+c(r,s) u^{3-d(r,s)}-C(r,s;t-u) \vert t-u \vert ^{3-d(r,s)}] \nonumber,
	\end{align}
where
	\begin{align*}
	C(r,s;t)=
		\begin{cases}
		c(r,s)					& \text{, if } t<0	\vspace*{0.17cm}\\
		\overline{c(s,r)}	& \text{, if } t>0
		\end{cases}.
	\end{align*}
Especially there exists a Gaussian stochastic process $\mathcal{G}=\{\mathcal{G}(s,t)| (s,t)\in \mathbb{T}\}$ with zero mean and covariance function $V$. For more details see Lemma \ref{Lemma_hermitesch_positiv_definit}.
\begin{theorem} \label{theorem_complex_multiplication_operator_functional_central_limit}
Suppose that $h(s)\in (\frac{1}{2},1)$ for each $s\in\mathbb{S}$, the integrals 
	\begin{equation*}
	E\left[ \int_{\mathbb{S}}\frac{|\varepsilon_{0}(v)|^{2}}{[1-h(v)]^2}\mu(dv)\right]^{\frac{p}{2}}
	\hspace*{0.2cm} \text{ and} \hspace*{0.2cm} 
	\int_{\mathbb{S}}\frac{\sigma^{2}(v)}{[1-h(v)][2h(v)-1]}\mu(dv)
	\end{equation*}
are finite and either $p=2$ and $\bar{h}=\esssup_{s \in \Sg} h(s)<1$ or $p>2$. Then
	\begin{align*}
	n^{-H}\zeta_{n}\overset{D}{\rightarrow} G \text{ as } n\rightarrow \infty
	\end{align*}
in $C([0,1], L_{2}(\mu,\C))$ with $H=\frac{3}{2}I-D$. The process $G=\{G(s,t)| (s,t)\in \Sg \times [0,1] \}$ is a restriction to $\Sg \times [0,1]$ of the process $\mathcal{G}$.
\end{theorem}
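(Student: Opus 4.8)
The plan is to prove weak convergence in $C([0,1],L_2(\mu,\C))$ along the classical two-step route: convergence of the finite-dimensional distributions of $n^{-H}\zeta_n$ towards those of the Gaussian process $\mathcal{G}$ whose existence and covariance $V$ were recorded above, followed by tightness of the sequence $(n^{-H}\zeta_n)_{n\ge1}$. Since $\zeta_n(0)=0$ and $L_2(\mu,\C)$ is separable, these two ingredients together with the standard characterisation of weak convergence in $C([0,1],B)$ for a separable Banach space $B$ give the assertion.

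For the finite-dimensional distributions I would first diagonalise. As $D$, and hence $H=\tfrac32 I-D$, is a multiplication operator, the normalisation $n^{-H}$ acts pointwise in $s\in\Sg$ as multiplication by $n^{-(3/2-d(s))}$, and reordering the defining series by innovations gives
\[
\big(n^{-H}S_{\lfloor nt\rfloor}\big)(s)
= n^{-(3/2-d(s))}\sum_{l\le\lfloor nt\rfloor} a_{n,l}(t;s)\,\varepsilon_l(s),
\qquad
a_{n,l}(t;s)=\sum_{k=1\vee l}^{\lfloor nt\rfloor}(k-l+1)^{-d(s)} .
\]
The process $\zeta_n(t)$ differs from $S_{\lfloor nt\rfloor}$ only by the single term $\{nt\}X_{\lfloor nt\rfloor+1}$, which is asymptotically negligible after normalisation since $\|n^{-H}\|_{op}\to0$ and $E\|X_0\|^2<\infty$. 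Fixing times $t_1,\dots,t_m$ and applying the Cramér--Wold device, I would reduce the joint convergence to the asymptotic normality of real linear functionals of this triangular array of independent summands and verify a Lindeberg condition, exactly as in the proof of Theorem \ref{theorem_complex_multiplication_operator_central_limit}. The one genuinely new computation is the limiting covariance: replacing the sums $a_{n,l}$ by Riemann integrals and using the integral representation of the beta function from (\ref{equality_beta}), the cross-covariance of $n^{-H}\zeta_n(t)$ and $n^{-H}\zeta_n(u)$ converges to $V((r,t),(s,u))$, the three summands $t^{3-d(r,s)}$, $u^{3-d(r,s)}$ and $C(r,s;t-u)|t-u|^{3-d(r,s)}$ arising from the early, late and overlapping ranges of the double sum. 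At $t=u$ the third summand vanishes and one recovers the covariance of Theorem \ref{theorem_complex_multiplication_operator_central_limit} multiplied by $t^{3-d(r,s)}$, a convenient consistency check. This identifies the limiting finite-dimensional distributions with those of $\mathcal{G}$.

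For tightness I would use the Kolmogorov-type moment criterion in $C([0,1],B)$: it suffices to exhibit constants $C$ and $\gamma>1$, independent of $n$, with
\[
E\big\|n^{-H}\big(\zeta_n(t)-\zeta_n(\tau)\big)\big\|^{p}\le C\,|t-\tau|^{\gamma},
\qquad 0\le\tau\le t\le1,\ n\ge1 .
\]
Writing the increment as the partial sum of the $X_k$ over the block of indices between $\lfloor n\tau\rfloor$ and $\lfloor nt\rfloor$ plus the two fractional boundary terms, I would estimate its $L_2(\mu,\C)$-norm pointwise in $s$ through the same beta-function asymptotics for $\sum_k(k-l+1)^{-d(s)}$ and then pass to the $p$-th moment using the hypothesis $E\big(\int_\Sg|\varepsilon_0(v)|^2(1-h(v))^{-2}\mu(dv)\big)^{p/2}<\infty$. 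Each spectral mode behaves like a fractional process of index $3/2-h(s)$, so the increment has second moment of order $|t-\tau|^{2(3/2-h(s))}$ and the $p$-th moment of the norm is of order $|t-\tau|^{p(3/2-\bar h)}$. Hence $\gamma=p(3/2-\bar h)$, and the dichotomy in the hypotheses is precisely what forces $\gamma>1$: for $p=2$ one needs $\bar h<1$, whereas $p>2$ yields $\gamma\ge p/2>1$ even when $\bar h=1$.

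The step I expect to be the main obstacle is this uniform-in-$n$ moment bound underlying tightness. Because the coefficients $(j+1)^{-D}$ are not summable, increments over disjoint blocks remain correlated and the variance of a block of length $\lfloor n\theta\rfloor$ grows like $n^{3-2h}\theta^{3-2h}$ rather than linearly; producing the exponent $2(3/2-h)$ with a constant that stays controlled uniformly in $s$ as $h(s)$ approaches $\bar h$ --- where the beta factor $c(r,s)$ and the denominators $1-h$ and $2h-1$ degenerate --- is the delicate point, and it is exactly here that the finiteness of the two integral hypotheses is consumed. Once this bound is secured with $\gamma>1$, tightness follows, and combined with the finite-dimensional convergence it yields $n^{-H}\zeta_n\overset{D}{\rightarrow}G$ in $C([0,1],L_2(\mu,\C))$ with $G$ the restriction of $\mathcal{G}$ to $\Sg\times[0,1]$.
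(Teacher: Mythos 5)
Your overall architecture --- finite-dimensional convergence plus a Kolmogorov-type moment bound for tightness, with the limiting covariance identified as $V$ and the tightness exponent $\gamma=p(3/2-\bar h)=(3-2\bar h)p/2$ explaining the dichotomy between $p=2,\ \bar h<1$ and $p>2$ --- is exactly the paper's, and your covariance computation (splitting the double sum into early, late and overlapping ranges, beta-function asymptotics, consistency at $t=u$) matches Lemma \ref{Konvergenz_Kovarianzen_Invarianzpr_Multi_komplex} and Lemma \ref{lemma_tightness_complex} in substance.

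The gap is in your finite-dimensional step. You propose Cram\'er--Wold plus a Lindeberg condition for ``real linear functionals'' of the triangular array, but the objects whose distributions must converge are random elements of the infinite-dimensional space $L_{2}^{q}(\mu,\C)$, and in infinite dimensions convergence of all one-dimensional projections $\langle\,\cdot\,,f\rangle$ does \emph{not} imply weak convergence of the laws --- one needs in addition a tightness or flat-concentration control in the spatial variable $s$, which is a separate issue from the tightness in $t$ that you do address. The paper supplies this missing layer in two pieces: (i) Theorem \ref{theorem_weak convergence_Cremers_Kadelka} (Cremers--Kadelka), which upgrades almost-everywhere convergence of the finite-dimensional distributions in $s$ to convergence in $L_{2}(\mu,\C^{q})$, but only under the extra conditions that $E\|Y_n(s)\|^2\to E\|Y(s)\|^2$ pointwise and that $E\|Y_n(s)\|^2$ is dominated uniformly in $n$ by a $\mu$-integrable function --- and it is here (Lemma \ref{Lemma_integrierbare_Funktion_Invarianzpr_Multiplop}), not only in the tightness bound, that the two integral hypotheses are consumed; and (ii) for the pointwise-in-$s$ asymptotic Gaussianity itself, the paper does not run a Lindeberg argument but invokes the Ra\v{c}kauskas--Suquet comparison (Lemma \ref{Lemma_Bedingungen_Operator}), replacing $\varepsilon_j$ by Gaussian innovations with the same covariance and showing the $\varrho_3$-distance vanishes under the conditions $\sup_j\|A_{nj}\|_{op}\to0$ and $\limsup_n\sum_j\|A_{nj}\|_{op}^2<\infty$. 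A Lindeberg-type CLT for the finite spatial marginals could likely be substituted for (ii), but (i) cannot be omitted: without the second-moment convergence and the integrable dominating function your argument establishes convergence of the marginals $(\zeta_n(s_i,t_j))_{i,j}$ but not convergence in $L_{2}^{q}(\mu,\C)$, so condition (i) of Proposition \ref{proposition_tightness} and the identification of the limit in $C([0,1],L_2(\mu,\C))$ would both remain unproved.
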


Again we refer to the appendix for the proof details.

\section{Proofs of the main results} \label{section_proofs_of_the_main_results}
We start with some general results about the process $(X_{k})_{ k \in \Z }$ given by (\ref{equality_general_process}) with $u_{j}=(j+1)^{-N}$. First, we need to show the convergence of the series. Therefore we rewrite the process $(X_{k})_{k \in \Z }$ with the help of (\ref{equality_unitary_decomposition}) and obtain
	\begin{align} 
	X_{k}	=\sum_{j=0}^{\infty} (j+1)^{-N}\varepsilon_{k-j}	
				=\sum_{j=0}^{\infty} (j+1)^{-U^{*}DU}\varepsilon_{k-j}	
				=\sum_{j=0}^{\infty} U^{*}\left( (j+1)^{-D}(U\varepsilon_{k-j})\right).
	\end{align}
To avoid confusion we denote the inner product of $L^{2}(\mu,\C)$-space as $\langle \cdot,\cdot \rangle_{2}$ and the corresponding norm as $\| \cdot \|_{2}$.\\
The series of operator norms 
$\sum_{j=0}^{\infty} \|(j+1)^{-N}\|_{op}$ diverges if and only if $\essinf_{s \in \Sg} h(s) \leq 1$, because
		\begin{align*}
		\|u_{j}\|_{op}
		=& \sup\left\lbrace \frac{ \|(j+1)^{-N}f \|_{\Hi}	}{ \|f\|_{\Hi} }	| f \in \Hi \text{ with } f \neq 0\right\rbrace	\\
		=& \sup\left\lbrace \frac{\|(j+1)^{-D}(Uf) \|_{2}}{\| U f\|_{2}} | Uf \in L_{2}(\mu, \C) \text{ with } Uf \neq 0\right\rbrace	
		=(j+1)^{-\essinf_{s \in \Sg}h(s)},
		\end{align*}
where the last step follows, since the operator norm of the multiplication operator $(j+1)^{-D}$ is known. Therefor we refer to the appendix, especially to (\ref{equality_operatornorm_multiplicationoperator}).
\vspace*{0.2cm}\\
We define the process $(Z_{k})_{k \in \Z }$ by 
	\begin{align} \label{equality_process_Z}
	Z_{k}=\sum_{j=0}^{\infty} (j+1)^{-D}(U\varepsilon_{k-j}).
	\end{align}
Our aim is to apply the results of Theorems \ref{theorem_complex_multiplication_operator_central_limit} and \ref{theorem_complex_multiplication_operator_functional_central_limit} to the new process $(Z_{k})_{k \in \Z }$. With this in mind, we prove that the series $(U\varepsilon_{k})_{ k\in \Z }$ fulfills the assumptions.
Since $(\varepsilon_{k})_{ k\in \Z }$ is a sequence of independent, identically distributed random variables with values in $\Hi$ and $U: \Hi \to L_{2}(\mu,\C)$ is a unitary operator, $(U\varepsilon_{k})_{ k\in \Z }$ is a sequence of $L_{2}(\mu,\C)$-valued random variables.\\
Moreover, the expected value is zero since $E\left( U\varepsilon_{0} \right) = U(E\varepsilon_{0})=0$ holds.\\
We still need to verify the interchangeability of the expected value and the unitary operator.
Referring to properties of expectation in \citet{bosq2000linear} about the interchange of an operator and the expectation of a Hilbert space-valued random variable, it suffices to prove if \\
$\varepsilon_{0} \in L^{1}_{\Hi}(P):=\{ X | \| X \|_{1}=E \| X \|<\infty \}$.
This easily follows from
	\begin{align*}
		E\|\varepsilon_{0}\|
	=		E\left( \|\varepsilon_{0}\|^{2} \right) ^{\frac{1}{2}}
	\leq		\left( E\|\varepsilon_{0}\|^{2} \right) ^{\frac{1}{2}}<\infty.
	\end{align*}
Using unitarity of $U$, we get the finite second moments
	\begin{align*}
	E\|U\varepsilon_{0}\|^2_{2}=E\|\varepsilon_{0}\|^2<\infty.
	\end{align*}
So, referring to the assumptions in Theorems \ref{theorem_central_normal_op} and \ref{theorem_functional_central_normal_op}, the process $(Z_{k})_{ k \in \Z }$ fulfils the assumptions of Theorems \ref{theorem_complex_multiplication_operator_central_limit} and \ref{theorem_complex_multiplication_operator_functional_central_limit}, and we are able to apply Lemma \ref{Lemma_convergence_in_mean_square_complex}. Therefore, the process converges in $L_{2}(\mu,\C)$ if and only if $h(s)>\frac{1}{2}$ for $\mu$-almost all $s\in \mathbb{S}$ and if the integral
	\begin{align*}
	\int_{\Sg} \frac{\sigma_{U}^2(s)}{(1-h(s))^{2}}\mu(ds).
	\end{align*}
is finite.
The almost sure convergence and boundedness of the unitary operator enables the interchangeability of the series and the hermitian adjoint of $U$, so we obtain
	\begin{align} \label{equality_process_normal_perator_after_interchange}
	X_{k}=U^{*}\left( \sum_{j=0}^{\infty} (j+1)^{-D}(U\varepsilon_{k-j})\right)
	\end{align}
and its convergence. \\
The rest of the section is divided into two parts. First we prove the central limit theorem, secondly the functional central limit theorem. Additionally we investigate a self-similar Gaussian process with values in $\Hi$.\\
Before we start let us recall some properties of random variables with values in abstract spaces.
\vspace*{0.2cm}\\
With reference to \citet{ledoux1991probability}
a random variable $Y$ with values in $\Hi$ is called Gaussian, if for any continuous linear function $f:\Hi \to \C$, $f(Y)$ is a Gaussian random variable.
A stochastic process $(Y_{t})_{ t \in T }$ with values in $\Hi$ is Gaussian, if each linear combination $\sum_{i=1}^{n}a_{i}Y_{t_{i}}$ is a Gaussian random element, i.e. for each $n\geq1$, $a_{1},...,a_{n}$ in $\C$ and $t_{1},...,t_{n}$ in $T$.

\begin{proof}[Proof of Theorem~\ref{theorem_central_normal_op}]
As announced above we want to apply Theorem \ref{theorem_complex_multiplication_operator_central_limit} to the process $(Z_{k})_{ k \in \Z }$ defined by (\ref{equality_process_Z}). Under our assumptions we get the convergence in distribution of the partial sums $\tilde{S}_{n}=\sum_{k=1}^{n} Z_{k}$ normalized by $n^{-(\frac{3}{2}I-D)}$ to a Gaussian zero mean process $\tilde{G}=\{\tilde{G}(s) | s \in \Sg\}$ with covariance function
	\begin{align}	\label{equality_covariancenfunction_tilde{G}}
		E(\tilde{G}(r)\overline{\tilde{G}(s)})
	=\frac{c(r,s)+\overline{c(s,r)}}{(2-d(r,s))(3-d(r,s))} \sigma_{U}(r,s).
	\end{align}
In other words
	\begin{align}	\label{equality_weak_convergence_prozess_partial_sums_Zn}
	n^{-(\frac{3}{2}I-D)}\tilde{S}_{n}
	\overset{D}{\to}
	\tilde{G}
	\hspace*{0.7cm}
	\text{ as }
	n \to \infty
	\text{ in }
	L_{2}(\mu,\C).
	\end{align}
Rewriting the expression $n^{-(\frac{3}{2}-N)}S_{n}$ with $S_{n}=\sum_{k=1}^{n}X_{k}$ and help of the spectral theorem, we get
	\begin{align*}
	n^{-(\frac{3}{2}I-N)}S_{n}
	=&U^{*}(n^{D-\frac{3}{2}}
	(U S_{n}))
	=U^{*}\left( n^{D-\frac{3}{2}}
	\Big( U \sum_{k=1}^{n} (U^{*}Z_{k})\Big) \right) 	\\
	=&	U^{*}\left( n^{-(\frac{3}{2}-D)} \sum_{k=1}^{n} Z_{k} \right)
	= U^{*}\left(  n^{-(\frac{3}{2}-D)}\tilde{S}_{n}\right) .
	\end{align*}
Denote $z_{n}:=n^{\frac{3}{2}-D}$. 
We use the continuous mapping theorem and the fact that $U^{*}: L_{2}(\mu,\C) \rightarrow \Hi$ is a unitary operator, i.e. it is bounded and so particularly continuous. Applying $U^{*}: L_{2}(\mu,\C) \rightarrow \Hi$ to the convergence (\ref{equality_weak_convergence_prozess_partial_sums_Zn}), we get
	\begin{align*}
	U^{*}\left( n^{-(\frac{3}{2}I-D)}\tilde{S}_{n} \right) 
	\overset{D}{\to}
	U^{*}\tilde{G}
	\hspace*{0.7cm}
	\text{ as }
	n \to \infty
	\text{ in }
	\Hi.
	\end{align*}
So the limit process is of the form $G=\{U^{*}\tilde{G} \}$ with values in $\Hi$. Referring to Theorem \ref{theorem_complex_multiplication_operator_central_limit} $\{\tilde{G}(s)|s \in \Sg \}$ is Gaussian with values in $L_{2}(\mu,\C)$, i.e. $\tilde{f}(\tilde{G})$ is a complex-valued Gaussian random element for each continuous, linear function $\tilde{f}:L_{2}(\mu,\C) \to \C$. So
	\begin{align*}
	f(G)=f(U^{*}\tilde{G})=(f \circ U^{*})(\tilde{G})
	\end{align*}
is Gaussian for each continuous, linear function $f:\Hi \to \C$, since $f \circ U^{*}:L_{2}(\mu,\C) \to \C$.
\vspace*{0.2cm}\\
In the final step of the proof we calculate the covariance operator of the limit process. In general the covariance operator $C_{G}:\Hi \rightarrow \Hi$ is given by $C_{G}(x)=E(\langle G,x \rangle \overline{G})$ (see \citet{bosq2000linear}). An alternative definition says that $C_{G}$ is the covariance operator of $G$ if and only if $E(\langle G,x \rangle \overline{\langle y,G\rangle})
= \langle x, C_{G}y\rangle$ for each $x,y \in \Hi$.
Therefor we use equality (\ref{equality_covariancenfunction_tilde{G}}) and obtain
	\begin{align*}
		E(\langle G,x \rangle \overline{\langle y,G\rangle})
	=&	E(\langle Ux, \tilde{G} \rangle_{2} \langle \tilde{G},Uy \rangle_{2})
	=	E\left( \int_{\Sg} (Ux)(r) \overline{\tilde{G}(r)} \mu(dr)\int_{\Sg} \tilde{G}(s)\overline{(Uy)(s)} \mu(ds)\right)	\\
	=	&\int_{\Sg}\int_{\Sg}  (Ux)(r) \overline{(Uy)(s)}
		\overline{E(\tilde{G}(r)\overline{\tilde{G}(s)})}
		 \mu(dr)\mu(ds)	\\
	=	&\langle Ux, \int_{\Sg}
		E(\tilde{G}(\cdot)\overline{\tilde{G}(s)})
		(Uy)(s)\mu(ds) \rangle_{2}	\\
	=	&\langle x, U^{*} \int_{\Sg}
		\left(\frac{c(r,s)+\overline{c(s,r)}}{(2-d(r,s))(3-d(r,s))} \sigma_{U}(r,s)\right)
		(Uy)(s)\mu(ds) \rangle.
	\end{align*}
The assumed interchangeability of the expected value and the integrals easily follows from Fubini. To prove this, 
we first apply Lemma \ref{lemma_dominated convergence_integrable function_central_limt} to $n^{-(\frac{3}{2}-D)}\tilde{S}_{n}$, then the inequality also holds for the limit process $\tilde{G}$, i.e.
		\begin{align}
				E|\tilde{G}(s)|^2
		\leq		\left( \frac{\sigma_{U}^2(s)}{(1-h(s))^2} + \frac{\sigma_{U}^2(s)}{(1-h(s))(2h(s)-1)}\right)=:\tilde{g}(s).
		\end{align}
Using H\"older's inequality yields
	\begin{align*}
			&\int_{\Sg} \int_{\Sg} E|(Ux)(r)\overline{(Uy)(s)}\overline{\tilde{G}(r)}\tilde{G}(s)|\mu(dr)\mu(ds)	\\
	\leq	&\int_{\Sg} \int_{\Sg} |(Ux)(r)\overline{(Uy)(s)}|
			\tilde{g}(r)^{\frac{1}{2}} \tilde{g}(s)^{\frac{1}{2}}\mu(dr)\mu(ds)	\\
	\leq	&	\left( \int_{\Sg} |(Ux)(r)|^2\mu(dr)\right) ^{\frac{1}{2}} 
				\left(\int_{\Sg} |\tilde{g}(r)| \mu(dr)\right) ^{\frac{1}{2}}
				\left( \int_{\Sg} |(Uy)(s)|^2\mu(ds)\right) ^{\frac{1}{2}} 
				\left(\int_{\Sg} |\tilde{g}(s)| \mu(ds)\right) ^{\frac{1}{2}}	\\
	\leq	&	\|x\| \|y\|
				\int_{\Sg} |\tilde{g}(s)| \mu(ds)	
	<			\infty.
	\end{align*}
Under our assumptions in Theorem \ref{theorem_complex_multiplication_operator_central_limit} the function $\tilde{g}$ is integrable and it follows the assertion.
\end{proof}

\begin{proof}[Proof of Theorem~\ref{theorem_functional_central_normal_op}]
First we rewrite the piecewise linear function using (\ref{equality_process_normal_perator_after_interchange})
\begin{align} \label{equality_rewritten_piecewise_linear_process}
S_{\lfloor nt \rfloor} + \{nt\} X_{\lfloor nt \rfloor +1} 
=&\sum_{k=1}^{\lfloor nt \rfloor}U^{*}Z_{k} + \{nt\} U^{*}Z_{\lfloor nt \rfloor +1}		\\
=&U^{*}\left( \sum_{j=-\infty}^{\lfloor nt \rfloor +1} a_{nj}(t) (U\varepsilon_{j})\right)
:=U^{*} \tilde{\zeta}_{n}(t)	\nonumber
\end{align}
with
$
a_{nj}(t)=\sum_{k=1}^{\lfloor nt \rfloor} v_{k-j} + \{nt\} v_{\lfloor nt \rfloor +1-j}
$
and
	\begin{align*}	v_{j}=
		\begin{cases}
		(j+1)^{-D}	&\text{, if } j \geq 0	\\
		0					&\text{, if } j < 0
		\end{cases}.
	\end{align*}

We consider the sequence of piecewise linear functions $\zeta_{n}$ and the stochastic Process $G$ as random elements in the separable Banach space $C([0,1], \Hi)$. To prove the theorem, we have to show the convergence of the finite-dimensional distributions and tightness.
\vspace*{0.2cm}\\
Analogously to the proof of the central limit theorem we first use the fact that the process $(Z_{k})_{ k \in \Z }$ fulfills the assumptions of Theorem \ref{theorem_complex_multiplication_operator_functional_central_limit}. So the process $(\tilde{\zeta}_{n}(t))_{n \in \N }$ normalized by $z_{n}=n^{\frac{3}{2}-D}$ converges to a Gaussian process $\tilde{G}=\{ \tilde{G}(s,t) | (s,t) \in \Sg \times [0,1]\}$ with zero mean and covariance function $V_{U}$ in $C([0,1], L_{2}(\mu,\C))$. As stated in section \ref{section_preliminary_results} the process $\tilde{G}$ is defined as a restriction of a Gaussian stochastic process $\tilde{\mathcal{G}}=\{ \tilde{\mathcal{G}}(s,t) | (s,t) \in \Sg \times [0,\infty)\}$.

\subsubsection{Convergence of the finite-dimensional distributions}
The statements we have shown for the proof of Theorem \ref{theorem_complex_multiplication_operator_functional_central_limit} are applicable to the process $(\tilde{\zeta}_{n}(t))_{n \in \N}$ defined in (\ref{equality_rewritten_piecewise_linear_process}) since $(Z_{k})_{ k \in \Z }$ fulfills the conditions in Theorem \ref{theorem_complex_multiplication_operator_functional_central_limit}.
So using the convergence (\ref{equality_convergence_finite_dim_functional_central}) yields
		\begin{align*}
		z_{n}^{-1}\tilde{\zeta}_{n}^{(q)}:=(z_{n}^{-1}\tilde{\zeta}_{n}(t_{1}),...,z_{n}^{-1}\tilde{\zeta}_{n}(t_{q}))
		\overset{D}{\longrightarrow}
		(\tilde{G}(t_{1}),...,\tilde{G}(t_{q}))=:\tilde{G}^{(q)}
		\hspace*{0.2cm} \text{ as } \hspace*{0.2cm} 
		n \to \infty
		\end{align*}
	in $L_{2}^{q}(\mu,\C)$ for each $q \in \N$ and $t_{1},...,t_{q} \in [0,1]$. In other words
		\begin{align*}
		z_{n}^{-1}\tilde{\zeta}_{n}^{(q)}
		\overset{D}{\longrightarrow}
		\tilde{G}^{(q)}
		\hspace*{0.2cm} \text{ as } \hspace*{0.2cm} 
		n \to \infty.
		\end{align*}
To show the convergence of the finite-dimensional distributions we have to prove
		\begin{align*}
		n^{-H}\zeta_{n}^{(q)}:=
		(n^{-H}\zeta_{n}(t_{1}),...,n^{-H}\zeta_{n}(t_{q})) 
		\overset{D}{\longrightarrow}
		(G(t_{1}),...,G(t_{q}))=:G^{(q)}
		\hspace*{0.2cm} \text{ as } \hspace*{0.2cm} 
		n \to \infty
		\end{align*}
	in $\Hi^{q}$ for each $q \in \N$ and $t_{1},...,t_{q} \in [0,1]$. Using the known notations and (\ref{equality_rewritten_piecewise_linear_process}) this is equivalent to
		\begin{align*}
		(U^{*}(z_{n}^{-1}\tilde{\zeta}_{n}(t_{1})),...,U^{*}(z_{n}^{-1}\tilde{\zeta}_{n}(t_{q})))
		\overset{D}{\longrightarrow}
		(G(t_{1}),...,G(t_{q}))
		\hspace*{0.2cm} \text{ as } \hspace*{0.2cm} 
		n \to \infty
		\end{align*}
	in $\Hi^{q}$, since  $n^{-H}=U^{*}z_{n}^{-1}U$.
	Defining the mapping
		\begin{align*} \hat{U}^{*}:
		\begin{cases}
		L_{2}^{q}(\mu,\C)		&\rightarrow	\Hi ^{q}\\
		(g_{1},...,g_{q})	&\mapsto		(U^{*}g_{1},...,U^{*}g_{q})
		\end{cases},
		\end{align*}
		it is possible to rewrite the convergence condition to
		\begin{align*}
		\hat{U}^{*}(
		(z_{n}^{-1}\tilde{\zeta}_{n}(t_{1}),...,z_{n}^{-1}\tilde{\zeta}_{n}(t_{q}))
		\overset{D}{\longrightarrow} 
		(G(t_{1}),...,G(t_{q}))
		\hspace*{0.2cm} \text{ as } \hspace*{0.2cm} 
		n \to \infty,
		\end{align*}
	in other therms
		\begin{align*}
		\hat{U}^{*}\tilde{\zeta}_{n}^{(q)}
		\overset{D}{\longrightarrow}
		G^{(q)}
		\hspace*{0.2cm} \text{ as } \hspace*{0.2cm} 
		n \to \infty.
		\end{align*}
Analogously to the proof of the central limit theorem, we get the convergence statement, using the continuous mapping theorem.
		\begin{align*}
		\hat{U}^{*}\zeta_{n}^{(q)}
		\overset{D}{\to}
		\hat{U}^{*}\tilde{G}^{(q)}
		\hspace*{0.2cm} \text{ as } \hspace*{0.2cm} 
		n \to \infty.
		\end{align*}
So, the limit process $G$ is of the form $U^{*}\tilde{G}$. 
	
\subsubsection{Tightness}
In 1968, Billingsley establishes sufficient conditions for tightness for a sequence of random elements with values in $C([0,1],\R)$. Referring to \citet{Rack_Suquet_Op_2011}, we use the following extension to the space $C([0,1],\Hi)$.
\begin{proposition} \label{proposition_tightness}
Let $\Hi$ be a separable Hilbert space.
A sequence of random elements $(Y_{n})_{n\geq1}$ with values in $C([0,1],\Hi)$ is tight if 
	\begin{itemize}
	\item[(i)] for every $t\in[0,1]$, $(Z_{n}(t))_{n\geq1}$ is tight in $\Hi$.
	\item[(ii)] there exist constants $\gamma \geq 0$, $\alpha>1$ and a continuous increasing function
					$F:[0,1] \rightarrow \R$, such that
					\begin{align*}
					P(\| Y_{n}(t)-Y_{n}(u) \| > \lambda) \leq \lambda^{-\gamma}|F(t)-F(u)|^{\alpha}.
					\end{align*}
	\end{itemize}
\end{proposition}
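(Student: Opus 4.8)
The plan is to deduce the proposition from Prokhorov's theorem together with the Arzel\`a--Ascoli characterization of compactness in $C([0,1],\Hi)$. Since $C([0,1],\Hi)$ is a complete separable metric space, a sequence $(Y_{n})_{n\geq1}$ is tight if and only if for every $\eta>0$ there is a relatively compact set $K\subset C([0,1],\Hi)$ with $\sup_{n} P(Y_{n}\notin K)<\eta$. By Arzel\`a--Ascoli for $\Hi$-valued functions, $\overline{K}$ is compact precisely when (a) for each $t\in[0,1]$ the section $\{f(t):f\in K\}$ is relatively compact in $\Hi$, and (b) $K$ is uniformly equicontinuous, i.e. $\lim_{\delta\to0}\sup_{f\in K}w(f,\delta)=0$ with $w(f,\delta)=\sup_{|t-u|\leq\delta}\|f(t)-f(u)\|$. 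It therefore suffices, given $\eta$, to construct such a $K$ carrying all but $\eta$ of the mass uniformly in $n$.

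The quantitative heart of the argument is to turn the two-point tail bound (ii) into uniform control of the oscillation. First I would prove that condition (ii), with $\alpha>1$, forces $\lim_{\delta\to0}\sup_{n} P(w(Y_{n},\delta)>\epsilon)=0$ for every $\epsilon>0$. I would obtain this by a dyadic chaining argument of Kolmogorov--Chentsov type: for nearby $t,u$ write $\|Y_{n}(t)-Y_{n}(u)\|$ as a telescoping sum of increments along successive dyadic refinements, apply the tail bound (ii) at each level, and sum the resulting series. The exponent $\alpha>1$ is exactly what makes this series converge and pins the modulus of continuity down to be small with high probability; continuity and monotonicity of $F$ guarantee that $|F(t)-F(u)|$ is uniformly small when $|t-u|$ is, which feeds the estimate. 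This step is the coordinate-free analogue of Billingsley's classical tightness criterion on $C([0,1],\R)$.

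With the oscillation bound in hand I would assemble the compact set. Fix $\eta>0$. Using (i), for each $t_{j}$ in a countable dense set $\{t_{j}\}\subset[0,1]$ pick a compact $K_{j}\subset\Hi$ with $\sup_{n} P(Y_{n}(t_{j})\notin K_{j})<\eta 2^{-j-1}$. Using the oscillation bound, pick $\delta_{m}\downarrow0$ and $\epsilon_{m}\downarrow0$ with $\sup_{n} P(w(Y_{n},\delta_{m})>\epsilon_{m})<\eta 2^{-m-1}$. Let $K$ be the set of $f$ with $f(t_{j})\in K_{j}$ for all $j$ and $w(f,\delta_{m})\leq\epsilon_{m}$ for all $m$. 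The constraints on $\delta_{m},\epsilon_{m}$ make $K$ uniformly equicontinuous, and equicontinuity together with $f(t_{j})\in K_{j}$ propagates total boundedness of the sections to every $t$ by approximating $f(t)$ by $f(t_{j})$ for a nearby $t_{j}$; hence $\overline{K}$ is compact by Arzel\`a--Ascoli. A union bound then gives $\sup_{n} P(Y_{n}\notin K)\leq\sum_{j}\eta 2^{-j-1}+\sum_{m}\eta 2^{-m-1}\leq\eta$, which is the required tightness.

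The main obstacle I expect is the oscillation estimate of the second paragraph --- extracting uniform control of $w(Y_{n},\delta)$ from the single two-point bound (ii). In finite dimensions this is the substance of Billingsley's theorem; in the Hilbert setting the only adjustment is that the chaining is run on the scalar quantity $\|Y_{n}(t)-Y_{n}(u)\|$, so the triangle inequality suffices and no inner-product structure is needed. A secondary subtlety worth flagging is that condition (i) cannot be dispensed with: unlike in $C([0,1],\R)$, boundedness does not entail compactness in $\Hi$, so pointwise tightness must enter as a genuine hypothesis, and the interaction of (i) and (ii) happens precisely at the Arzel\`a--Ascoli step.
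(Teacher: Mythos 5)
The paper offers no proof of this proposition: it is imported verbatim as a known extension of Billingsley's tightness criterion to $C([0,1],\Hi)$, with a citation to Ra\v{c}kauskas and Suquet, so there is no in-paper argument to compare yours against. Judged on its own, your outline is correct and follows the standard route: reduce tightness to exhibiting, for each $\eta>0$, a relatively compact set of paths via the Arzel\`a--Ascoli characterization of compactness in $C([0,1],\Hi)$ (uniform equicontinuity plus relatively compact sections), upgrade the two-point tail bound (ii) to uniform control of the modulus $w(Y_n,\delta)$ by the dyadic maximal-inequality/chaining argument, and assemble the compact set from countably many section constraints at a dense set of times together with countably many modulus constraints, closing with a union bound. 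The only step carrying real content is the passage from (ii) to $\lim_{\delta\to0}\sup_n P(w(Y_n,\delta)>\epsilon)=0$; you defer it to ``Kolmogorov--Chentsov type chaining,'' which is acceptable because, as you observe, the scalar proof (Billingsley's Theorems 12.1--12.3) uses nothing beyond the triangle inequality applied to $\|Y_n(t)-Y_n(u)\|$ together with the monotonicity and uniform continuity of $F$, so it transfers verbatim; the hypothesis $\alpha>1$ is exactly what makes the geometric series over dyadic levels converge. You also correctly isolate the one genuine departure from the real-valued case: condition (i) must be imposed at every $t$ (equivalently, at a dense set of $t$, since equicontinuity then propagates total boundedness of all sections), because closed balls in $\Hi$ are not compact, whereas Billingsley needs tightness only at a single time point. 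Two minor quibbles: what you invoke at the outset is simply the definition of tightness of the sequence of laws, not Prokhorov's theorem; and the $(Z_n(t))$ appearing in item (i) of the statement is evidently a typo for $(Y_n(t))$, which you have silently and correctly repaired.
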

The first point easily follows since the central limit theorem is still proved. In detail, since the process $n^{-H}S_{n}$ converges in distribution in $\Hi$, the sequence $\{n^{-H}\zeta_{n}(t)\}$ converges in distribution in $\Hi$ and using Prohorov, it is tight on $\Hi$ for each $t\in [0,1]$
\vspace*{0.2cm} \\
The second point is an implication of Lemma \ref{lemma_tightness_complex}. Using additionally the linearity of $U^{*}$, we obtain
		\begin{align*}
			 E\| n^{-H}\zeta_{n}(t)-n^{-H}\zeta_{n}(u) \|^{p}
		=	&E\| U^{*}(z_{n}^{-1}\tilde{\zeta}_{n}(t))-
				 U^{*}(z_{n}^{-1}\tilde{\zeta}_{n}(u)) \|^{p}\\
		=	&E\| U^{*}(	z_{n}^{-1}\tilde{\zeta}_{n}(t)-
									z_{n}^{-1}\tilde{\zeta}_{n}(u)) \|^{p}	\\
		=	&E\| z_{n}^{-1}\tilde{\zeta}_{n}(t)-z_{n}^{-1}\tilde{\zeta}_{n}(u) \|^{p}_{2}	\\
		\leq &C|t-u|^{\frac{(3-2\bar{h})p}{2}},
		\end{align*}
where the last inequality follows with the help of the mentioned lemma and under the assumptions of Theorem \ref{theorem_complex_multiplication_operator_functional_central_limit}.
\end{proof}

\subsubsection*{Properties of the process $\mathcal{G}$}
Finally we show some properties of the process $\mathcal{G}=\{\mathcal{G}(t)| t \in \R_{\geq 0} \}$, especially that it is Gaussian, calculate the cross-covariances and show self-similarity.
\vspace*{0.2cm}\\
The process is Gaussian, if 
	\begin{align*}
	f\left( \sum_{i=1}^{n} a_{i} \mathcal{G}(t_{i}) \right)
	\end{align*}
is a Gaussian random element in $\C$ for each $n\geq1$, $a_{1},...,a_{n}$ in $\C$ and $t_{1},...,t_{n}$ in $\R_{\geq 0}$ and also for all continuous, linear functions $f:\Hi \rightarrow \C$.\\
Using the linearity of the unitary operator $U^{*}$, we get
	\begin{align*}
	f\left( \sum_{i=1}^{n} a_{i} \mathcal{G}(t_{i}) \right)
	=f\left( \sum_{i=1}^{n} a_{i} U^{*} \tilde{\mathcal{G}}(t_{i}) \right)
	=(f \circ U^{*}) \left( \sum_{i=1}^{n} a_{i} \tilde{\mathcal{G}}(t_{i}) \right).
	\end{align*}
The composition $f \circ U^{*}$ is a mapping from $L_{2}(\mu,\C)$ into the complex numbers. Since the process $\tilde{\mathcal{G}}=\{\tilde{\mathcal{G}}(s,t)| (s,t) \in \Sg \times \R_{\geq 0} \}$ is Gaussian, we get the assertion.
\vspace*{0.2cm}\\
The following calculation yields the cross-covariance of the process $\mathcal{G}$.
	\begin{align*}
		E(\langle x,\mathcal{G}\rangle \overline{\langle y,\mathcal{G}\rangle})
	=	&E(\langle Ux, \tilde{\mathcal{G}} \rangle_{2} \langle \tilde{\mathcal{G}},Uy \rangle_{2})	\\
	=	&\int_{\Sg} \int_{\Sg} (Ux)(r) \overline{(Uy)(s)} \overline{E(\tilde{\mathcal{G}}(r,t) \overline{\tilde{\mathcal{G}}(s,u)})} \mu(dr)\mu(ds))\\
	=	&\langle Ux, \int_{\Sg}
		V_{U}((\cdot,t),(s,u))
		(Uy)(s)\mu(ds) \rangle_{2}	\\
	=	&\langle x, U^{*} \int_{\Sg}
		V_{U}((r,t),(s,u))
		(Uy)(s)\mu(ds) \rangle
	\end{align*}
It remains to show the interchangeability of the expected value and the integrals. Applying inequality (\ref{inequality_bound_function_functional_central_limit}) to the process $\tilde{\mathcal{G}}$, we obtain
\begin{align} \label{inequality_covariances_limit_process_invariance_principle}
				&E\left( \int_{\Sg} |\tilde{\mathcal{G}}(r,t)|^2 \mu(dr)\right) \\
		\leq	&\max\{t,t^2\} \left( \int_{\Sg} \frac{\sigma_{U}^2(r)}{(1-h(r))^2}\mu(dr) + 
				\int_{\Sg} \frac{\sigma_{U}^2(r)}{(1-h(r))(2h(r)-1)} \mu(dr)\right). \nonumber
		\end{align}
With the help of Fubini's theorem, the unitarity of $U$ and H\"older's inequality we get
	\begin{align*}
			&\int_{\Sg} \int_{\Sg} E|(Ux)(r)\overline{(Uy)(s)}\overline{\tilde{\mathcal{G}}(r,t)}\tilde{\mathcal{G}}(s,u)|\mu(dr)\mu(ds)	\\
	\leq	&\int_{\Sg} \int_{\Sg} |(Ux)(r)\overline{(Uy)(s)}|
			(E|\tilde{\mathcal{G}}(r,t)|^2)^{\frac{1}{2}} (E|\tilde{\mathcal{G}}(s,u)|^2)^{\frac{1}{2}}\mu(dr)\mu(ds)\\
	\leq		&	\|x\| \|y\|
				\left(\int_{\Sg} |E|\tilde{\mathcal{G}}(r,t)|^2| \mu(ds)\right) ^{\frac{1}{2}}
				\left(\int_{\Sg} |E|\tilde{\mathcal{G}}(s,u)|^2| \mu(ds)\right) ^{\frac{1}{2}}
	<			\infty.
	\end{align*}
The finiteness follows by using inequality (\ref{inequality_covariances_limit_process_invariance_principle}) and the assumptions in theorem \ref{theorem_functional_central_normal_op}.
\vspace*{0.2cm}\\
As a last step we show the existence of an operator self-similar process with values in $\Hi$.
\begin{lemma}
The stochastic process $\{\mathcal{G}(t) | t \in [0,\infty)\}$ is operator self-similar with scaling family $\{a^{H}|a>0\}$, where $H$ is equal to $\frac{3}{2}I-N$ and $N$ is a normal operator. 
\end{lemma}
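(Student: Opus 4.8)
The plan is to use that $\mathcal{G}$, with $\mathcal{G}(t)=U^{*}\tilde{\mathcal{G}}(t)$, is a centred Gaussian process, as shown above. For a fixed $a>0$ both $\{\mathcal{G}(at)\mid t\geq0\}$ (a reparametrisation in time) and $\{a^{H}\mathcal{G}(t)\mid t\geq0\}$ (the image of a Gaussian process under the bounded operator $a^{H}=\exp(H\log a)$) are again centred Gaussian processes. Hence the equality of finite-dimensional distributions reduces to matching their cross-covariance operators. Writing $\Gamma_{t,u}$ for the operator determined by $E(\langle x,\mathcal{G}(t)\rangle\,\overline{\langle y,\mathcal{G}(u)\rangle})=\langle x,\Gamma_{t,u}y\rangle$, which by the cross-covariance computation above is
\[
\Gamma_{t,u}y=U^{*}\int_{\Sg}V_{U}\big((\cdot,t),(s,u)\big)(Uy)(s)\,\mu(ds),
\]
it suffices to prove the scaling relation $a^{H}\Gamma_{t,u}(a^{H})^{*}=\Gamma_{at,au}$ for all $t,u\geq0$.

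The first step is a homogeneity identity for the kernel $V_{U}$. Since $a>0$, each power $t^{3-d(r,s)}$ is homogeneous of degree $3-d(r,s)=3-d(r)-\overline{d(s)}$, one has $|at-au|=a\,|t-u|$, and $\operatorname{sgn}(at-au)=\operatorname{sgn}(t-u)$ so that $C(r,s;at-au)=C(r,s;t-u)$. Inspecting the three terms in the definition of $V_{U}$ therefore gives
\[
V_{U}\big((r,at),(s,au)\big)=a^{\,3-d(r)-\overline{d(s)}}\,V_{U}\big((r,t),(s,u)\big),
\]
and consequently $\Gamma_{at,au}y=U^{*}\int_{\Sg}a^{\,3-d(r)-\overline{d(s)}}\,V_{U}((\cdot,t),(s,u))(Uy)(s)\,\mu(ds)$.

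The second step realises the scalar factor $a^{3-d(r)-\overline{d(s)}}$ through the spectral decomposition. From $a^{H}=U^{*}a^{\frac{3}{2}I-D}U$ we obtain $(a^{H})^{*}=U^{*}a^{\frac{3}{2}I-D^{*}}U$, where $D^{*}$ denotes multiplication by $\overline{d(\cdot)}$, so that $\big(U(a^{H})^{*}y\big)(s)=a^{\frac{3}{2}-\overline{d(s)}}(Uy)(s)$. Substituting this into $\Gamma_{t,u}(a^{H})^{*}y$ and then applying $a^{H}$, whose multiplication part $a^{\frac{3}{2}I-D}$ contributes the factor $a^{\frac{3}{2}-d(r)}$ in the remaining variable, yields
\[
a^{H}\Gamma_{t,u}(a^{H})^{*}y=U^{*}\int_{\Sg}a^{\frac{3}{2}-d(r)}\,a^{\frac{3}{2}-\overline{d(s)}}\,V_{U}\big((\cdot,t),(s,u)\big)(Uy)(s)\,\mu(ds).
\]
Because $a^{\frac{3}{2}-d(r)}a^{\frac{3}{2}-\overline{d(s)}}=a^{3-d(r)-\overline{d(s)}}$, the right-hand side equals $\Gamma_{at,au}y$, which establishes the scaling relation and hence operator self-similarity with scaling family $\{a^{H}\mid a>0\}$.

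I expect the main difficulty to be the careful bookkeeping of the two adjoints: since $N$, and therefore $D$, is only normal and not self-adjoint, $a^{H}$ and $(a^{H})^{*}$ carry the exponents $d$ and $\overline{d}$ respectively, and one must check, using $a>0$ and hence $\overline{a^{\frac{3}{2}-d(s)}}=a^{\frac{3}{2}-\overline{d(s)}}$, that these combine correctly with the conjugations in the $L_{2}(\mu,\C)$ inner product to reproduce exactly the factor $a^{3-d(r,s)}$ coming from the kernel. A second point worth recording is that over $\C$ the law of a centred Gaussian family is fixed by its Hermitian covariance together with the pseudo-covariance $E(\langle x,\mathcal{G}(t)\rangle\langle y,\mathcal{G}(u)\rangle)$; the latter is governed by $E(\tilde{\mathcal{G}}(r,t)\tilde{\mathcal{G}}(s,u))$, which obeys the same homogeneity in $t,u$, so the identical computation applies and no additional obstacle arises.
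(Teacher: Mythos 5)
Your proof is correct, and it rests on the same two ingredients as the paper's: the homogeneity $V_{U}((r,at),(s,au))=a^{3-d(r,s)}V_{U}((r,t),(s,u))$ of the covariance kernel and the spectral representation $a^{H}=U^{*}a^{\frac{3}{2}I-D}U$. The packaging differs, though. The paper first proves self-similarity of the multiplication-operator process $\tilde{\mathcal{G}}$ in $L_{2}(\mu,\C)$ (its Lemma on $\tilde{\mathcal{G}}$, which is exactly your kernel-homogeneity step) and then transports it to $\Hi$ by conjugating with $U$ inside the expectation; this forces it to justify interchanging $U^{*}$ with the expectation via a H\"older estimate and the moment bound on $\tilde{\mathcal{G}}$. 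You instead verify the single operator identity $a^{H}\Gamma_{t,u}(a^{H})^{*}=\Gamma_{at,au}$ directly on the deterministic kernel, which sidesteps that interchange entirely (the only Fubini-type step needed is the one already used to identify $\Gamma_{t,u}$, which the paper establishes beforehand). Your bookkeeping of the adjoint, $(a^{H})^{*}=U^{*}a^{\frac{3}{2}I-D^{*}}U$ with $D^{*}$ multiplication by $\overline{d}$, is the right way to see where the factor $a^{\frac{3}{2}-\overline{d(s)}}$ comes from. Your closing remark about the pseudo-covariance $E(\langle x,\mathcal{G}(t)\rangle\langle y,\mathcal{G}(u)\rangle)$ is a genuine point of care that the paper glosses over when it asserts that equality of (Hermitian) covariance operators suffices for equality in law of complex Gaussian families; since the relation function inherits the same $a^{3-d(r)-\overline{d(s)}}$ homogeneity, your argument closes that gap as well.
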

	\begin{proof}
	It is necessary to prove 		
		\[
		\{\mathcal{G}(at)| t\in [0,\infty)\}\overset{f.d.d.}{=}a^{H}\{\mathcal{G}(t)| t\in [0,\infty)\}.
		\]
	Since both sides are Gaussian processes with zero mean it suffices to show that the covariance operators are equal.
		\begin{align*}
		\langle E[\langle \mathcal{G}(at),f \rangle \overline{\mathcal{G}(au)}],g \rangle
		=\langle E[\langle a^{H} \mathcal{G}(t),f \rangle \overline{a^{H} \mathcal{G}(u)}],g \rangle
		\hspace*{0.2cm} \text{ for each } f,g \in \Hi
		\end{align*}
Using the decomposition into a unitary operator $U^{*}$ and the process $\tilde{\mathcal{G}}$ of which we still know the operator self-similarity, we get
		\begin{align*}
			\langle E[ \langle a^H \mathcal{G}(t),f \rangle \overline{a^H\mathcal{G}(u)}],g\rangle
		=	&\langle E[ \langle a^H (U^{*}\tilde{\mathcal{G}}(x,t)),f \rangle
			\overline{a^H(U^{*}\tilde{\mathcal{G}}(y,u))}],g\rangle \\
		=	&\langle E[ \langle U^{*}(a^{\frac{3}{2}-d(x)} \tilde{\mathcal{G}}(x,t)),f \rangle
			\overline{U^{*}(a^{\frac{3}{2}-d(y)}\tilde{\mathcal{G}}(y,u))}],g\rangle\\
		=	&\langle E[ \langle a^{\frac{3}{2}-d(\cdot)} \tilde{\mathcal{G}}(\cdot,t),Uf \rangle_{2} 
			\overline{U^{*}}(\overline{a^{\frac{3}{2}-d(y)}\tilde{\mathcal{G}}(y,u)})],g\rangle.
	\intertext{At this point we interchange the operator $U^{*}$ and the expected value. The regularity will checked out later.}
		=	&\langle \overline{U^{*}}(E[ \langle a^{\frac{3}{2}-d(\cdot)} \tilde{\mathcal{G}}(\cdot,t),Uf \rangle_{2} 
			\overline{a^{\frac{3}{2}-d(\cdot)}\tilde{\mathcal{G}}(\cdot,u)}]),g\rangle\\
		=	&\langle E[ \langle a^{\frac{3}{2}-d(\cdot)} \tilde{\mathcal{G}}(\cdot,t),Uf \rangle_{2} 
			\overline{a^{\frac{3}{2}-d(s)}\tilde{\mathcal{G}}(\cdot,u)}],\overline{U}g\rangle_{2} \\
		=	&\langle E[ \langle a^{\frac{3}{2}-d(\cdot)} \tilde{\mathcal{G}}(\cdot,t),\tilde{f} \rangle 
			\overline{a^{\frac{3}{2}-d(s)}\tilde{\mathcal{G}}(\cdot,u)}], \tilde{g} \rangle
	\intertext{Since $\tilde{f}:=Uf$ and $\tilde{g}:=\overline{U}g$ in $L_{2}(\mu,\C)$, the operator self-similarity of $\tilde{\mathcal{G}}$ (see Lemma \ref{lemma_operator_self_similar_multi}) is applicable and therefore}
		=	& \langle E[ \langle \tilde{\mathcal{G}}( \cdot ,at),\tilde{f} \rangle \overline{\tilde{\mathcal{G}}(s,au)}], \tilde{g} \rangle.
		\end{align*}
	As announced we prove the interchangeability of the operator $U^{*}$ and the expected value. Define $b(t):=\langle a^{\frac{3}{2}-d(\cdot)} \tilde{\mathcal{G}}(\cdot,t),Uf \rangle_{2} $. We have to prove $b(t)\overline{a^{-(\frac{3}{2}-d(y))}\tilde{\mathcal{G}}(y,u)} \in L^{1}_{L_{2}(\mu,\C)}(P)$.
		\begin{align*}
		& E \|b(t)\overline{a^{-(\frac{3}{2}-d(\cdot))}\tilde{\mathcal{G}}(\cdot,u)} \|_{2}
		=		E \left( \int_{\Sg} |b(t)\overline{a^{\frac{3}{2}-d(s)}\tilde{\mathcal{G}}(s,u)}|^2 \mu(ds)\right) ^{\frac{1}{2}}	\\
		\leq&	\left( E|b(t)|^2\right) ^{\frac{1}{2}}\left( E \int_{\Sg} |a^{\frac{3}{2}-d(s)}\tilde{\mathcal{G}}(s,u)|^2 \mu(ds)\right) ^{\frac{1}{2}}	\\
		\leq&	\left( E \int_{\Sg} |a^{\frac{3}{2}-d(r)} \tilde{\mathcal{G}}(r,t)|^2\mu(dr) \int_{\Sg} |(Uf)(r)|^2\mu(dr) \right) ^{\frac{1}{2}}
					\left( E \int_{\Sg} |a^{\frac{3}{2}-d(s)}\tilde{\mathcal{G}}(s,u)|^2 \mu(ds)\right) ^{\frac{1}{2}}	\\
		=		&	\left( \int_{\Sg} |(Uf)(r)|^2\mu(dr) \right) ^{\frac{1}{2}}
					\left( E \int_{\Sg} |a^{\frac{3}{2}-d(r)}\tilde{\mathcal{G}}(r,t)|^2 \mu(dr) \right)^{\frac{1}{2}}
					\left( E \int_{\Sg} |a^{\frac{3}{2}-d(s)}\tilde{\mathcal{G}}(s,u)|^2 \mu(ds) \right)^{\frac{1}{2}}  \\
		\leq&		\|f\|
					\max\{a^{\frac{1}{2}},a\}
					\left( \int_{\Sg} E|\tilde{\mathcal{G}}(r,t)|^2 \mu(dr) \right)^{\frac{1}{2}}
					\left( \int_{\Sg} E|\tilde{\mathcal{G}}(s,u)|^2 \mu(ds)\right) ^{\frac{1}{2}}
		<		 \infty
		\end{align*}
	We used H\"older's inequality and (\ref{inequality_covariances_limit_process_invariance_principle}).
	\end{proof}

\appendix
\section{Proofs of the preliminary results}

The proofs of theorems \ref{theorem_complex_multiplication_operator_central_limit} and \ref{theorem_complex_multiplication_operator_functional_central_limit} are closely related to the proofs in \citet{Chara_Rack_Central,Chara_Rack_Operator}. So we will focus on the passages which differ.
\vspace*{0.2cm}\\
We start with some preliminaries. It is well known that
the Beta function can be expressed as
	\begin{align*}
	 \operatorname{Beta}(x,y)=\frac{\Gamma(x)\Gamma(y)}{\Gamma(x+y)}.
	\end{align*}
Using this identity and $d(r,s)=d(r)+\overline{d(s)}$, we obtain
	\begin{align} \label{equality_Beta_function_alternative}
	\operatorname{Beta}(1-d(r),d(r,s)-1)=\frac{\Gamma(d(r,s)-1)\operatorname{Beta}(d(r),1-d(r))}{\Gamma(d(r))\Gamma(\overline{d(s)})}.
	\end{align}
For simplicity we denote $h(r,s)=h(r)+h(s)$. Recall that we write $h(s)$ for the real part of $d(s)$. Now, let $\tilde{h}(s)$ denote the imaginary part of $d(s)$. We define $c(s):=c(s,s)$, for the definition of $c(s,s)$ see (\ref{equality_Beta_function_alternative}). Using $a^{-i\tilde{h}(s)}=\exp(-i\tilde{h}(s)\log(a))$ and Euler's formula yields
	\begin{align} \label{inequality_realpart_c(s)}
	\frac{1}{2}(c(s)+\overline{c(s)})
	=& \int_{0}^{\infty}(x(x+1))^{-h(s)}\cos\Big( \log\Big( \frac{x}{x+1}\Big) ^{-\tilde{h}(s)}\Big) dx	\\
	\leq & \int_{0}^{\infty}(x(x+1))^{-h(s)}dx=:c_{h}(s).\nonumber 
	\end{align}
Continuing estimation of the right hand side gives
	\begin{align} \label{inequality_c(s)_in_dependence_of_h}
	c_{h}(s)\leq\frac{1}{1-h(s)}+\frac{1}{2h(s)-1}.
	\end{align}
Since $(u_{j})_{ j \in \Z }$ given by (\ref{equality_uj_komplex_op}) are multiplication operators in $L(L_{2}(\mu.\C))$, we have the operator norm 
	\begin{align}	\label{equality_operatornorm_multiplicationoperator}
	\|(j+1)^{-D}\|_{op}=(j+1)^{-\essinf_{s \in \Sg} h(s)}
	\end{align}
referring to \citet{comway1994course}.
So the series of operator norms $\sum_{j=0}^{\infty} \|(j+1)^{-D}\|_{op}$ diverges if and only if $\essinf_{s \in \Sg} h(s)\leq 1$. \

\begin{lemma} \label{Lemma_convergence_in_mean_square_complex}
The series (\ref{equality_general_process}) with $u_{j}$ as in (\ref{equality_uj_komplex_op}) converges in mean square if and only if $h(s)>\frac{1}{2}$ for $\mu$-almost all $s\in \mathbb{S}$ and if the integral
	\begin{align*}
	\int_{\Sg} \frac{\sigma^2(s)}{2h(s)-1}\mu(ds)
	\end{align*}
is finite. Then the series converges also almost surely.
\end{lemma}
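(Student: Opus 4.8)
The plan is to exploit that the summands are pairwise orthogonal in the Hilbert space $L^{2}_{L_{2}(\mu,\C)}(P)$ of square-integrable random elements, so that mean-square convergence collapses to a single summability condition which can then be evaluated explicitly. Write $Y_{j}:=(j+1)^{-D}\varepsilon_{k-j}$ and let $\|\cdot\|$ denote the $L_{2}(\mu,\C)$-norm. Since the $\varepsilon_{k-j}$ are independent with $E\varepsilon_{0}=0$, one has $E\langle Y_{i},Y_{j}\rangle=0$ for $i\neq j$, whence the Pythagorean identity $E\|\sum_{j=M}^{N}Y_{j}\|^{2}=\sum_{j=M}^{N}E\|Y_{j}\|^{2}$. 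Because $L^{2}_{L_{2}(\mu,\C)}(P)$ is complete, the partial sums form a Cauchy sequence—and hence converge in mean square—if and only if $\sum_{j=0}^{\infty}E\|Y_{j}\|^{2}<\infty$.

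The next step is to compute $E\|Y_{j}\|^{2}$. As $D$ is multiplication by $d$, the operator $(j+1)^{-D}$ is multiplication by $(j+1)^{-d(s)}$, and $|(j+1)^{-d(s)}|^{2}=(j+1)^{-(d(s)+\overline{d(s)})}=(j+1)^{-2h(s)}$. Therefore, by Tonelli,
\begin{align*}
E\|Y_{j}\|^{2}=E\int_{\Sg}(j+1)^{-2h(s)}|\varepsilon_{k-j}(s)|^{2}\mu(ds)=\int_{\Sg}(j+1)^{-2h(s)}\sigma^{2}(s)\,\mu(ds),
\end{align*}
and summing over $j$ (again Tonelli, all terms non-negative) gives
\begin{align*}
\sum_{j=0}^{\infty}E\|Y_{j}\|^{2}=\int_{\Sg}\sigma^{2}(s)\Big(\sum_{j=0}^{\infty}(j+1)^{-2h(s)}\Big)\mu(ds).
\end{align*}

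To close the equivalence I would control the inner series by the integral test: for $h(s)>\tfrac12$ one has $\frac{1}{2h(s)-1}\leq\sum_{j=0}^{\infty}(j+1)^{-2h(s)}\leq 1+\frac{1}{2h(s)-1}$, while the series diverges when $h(s)\leq\tfrac12$. Consequently the sum is finite precisely when $h(s)>\tfrac12$ for $\mu$-almost every $s$ with $\sigma^{2}(s)>0$ and $\int_{\Sg}\frac{\sigma^{2}(s)}{2h(s)-1}\mu(ds)<\infty$; the additive constant $1$ is harmless, since $\int_{\Sg}\sigma^{2}(s)\mu(ds)=E\|\varepsilon_{0}\|^{2}<\infty$ by hypothesis. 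This yields the asserted characterization in both directions. For the final clause, I would note that the partial sums $S_{M}:=\sum_{j=0}^{M}Y_{j}$ form a martingale with respect to the filtration generated by $\varepsilon_{k},\varepsilon_{k-1},\dots$ (because $\varepsilon_{k-M-1}$ is independent of the past and centred), and that it is $L^{2}$-bounded once the integral condition holds, as $\sup_{M}E\|S_{M}\|^{2}=\sum_{j}E\|Y_{j}\|^{2}<\infty$. The Hilbert-space martingale convergence theorem then gives almost sure convergence; alternatively one may invoke Itô--Nisio, by which mean-square (hence in-probability) convergence of a series of independent zero-mean elements already forces almost sure convergence.

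The main obstacle is the two-sided matching in the third step: translating the abstract criterion $\sum_{j}E\|Y_{j}\|^{2}<\infty$ into the clean integral condition of the lemma, i.e. verifying that the blow-up as $h(s)\downarrow\tfrac12$ is captured exactly by $\frac{1}{2h(s)-1}$ and that the residual bounded contribution is absorbed by the finite-variance assumption. By contrast, the interchanges of sum, integral and expectation are routine appeals to Tonelli via non-negativity, and the almost sure statement is a direct citation of a standard convergence theorem.
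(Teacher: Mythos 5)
Your proposal is correct and follows essentially the same route as the paper: both reduce mean-square convergence to the summability of $\sum_{j}E\|(j+1)^{-D}\varepsilon_{k-j}\|^{2}$ (the paper via Cauchy's criterion, you via the equivalent Pythagorean identity), evaluate this as $\int_{\Sg}\sigma^{2}(s)\sum_{j}(j+1)^{-2h(s)}\mu(ds)$, apply the same integral-test bounds $\frac{1}{2h(s)-1}\leq\sum_{j\geq1}j^{-2h(s)}\leq 1+\frac{1}{2h(s)-1}$ with the constant absorbed by $E\|\varepsilon_{0}\|^{2}<\infty$, and conclude almost sure convergence from the L\`evy--It\^o--Nisio theorem. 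Your added care on the ``only if'' direction and the alternative martingale argument are fine but do not change the method.
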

	\begin{proof}
	We want to apply Cauchy`s criterion. Defining 
	$ X_{nk}=\sum_{j=0}^{n}(j+1)^{-D}\varepsilon_{k-j} $, we have to prove
		\begin{align*}
		\lim_{M,N \to \infty} E \| X_{Nk}-X_{Mk} \|^2=0.
		\end{align*}
	Since $(j+1)^{-D}f=\{(j+1)^{-d(s)}f(s): s\in \Sg\}$ for each $j \geq 1$ and $f \in L_{2}(\mu,\C)$
		\begin{align*}
			E \| X_{Nk}-X_{Mk} \|^2
		=	\sum_{j=M+1}^{N}\int_{\Sg} (j+1)^{-2h(s)}\sigma^2(s)\mu(ds).
		\end{align*}
	Since
		\begin{align*}
		&\frac{1}{2h(s)-1} \leq \sum_{j=1}^{\infty}j^{-2h(s)} \leq 1+\frac{1}{2h(s)-1}	
		\intertext{we get}
		&\int_{\Sg} \frac{\sigma^2(s)}{2h(s)-1} \mu(ds) 
		\leq \int_{\Sg} \sum_{j=0}^{\infty}(j+1)^{-2h(s)}\sigma^2(s)\mu(ds) \leq 
		E\|\varepsilon_{0}\|^2 + \int_{\Sg} \frac{\sigma^2(s)}{2h(s)-1} \mu(ds).
		\end{align*}
Mean square convergence implies convergence in probability and applying the L\`evy-It\^o-Nisio theorem (\citet{ledoux1991probability}) almost sure convergence follows.
	\end{proof}
We need a generalization of the geometric sum
	\begin{align}	\label{equality_generalized_geometrical_series}
	&\sum_{j=m_{1}}^{m_{2}} (m_{3}-j)e^{-jx}	  \\
	=&
	\frac{(m_{3}-a_{1})e^{(2-m_{1})x}-(m_{3}-m_{2}-1)e^{(1-m_{2})x}-(m_{3}-m_{1}+1)e^{(1-m_{1})x}+(m_{3}-m_{2})e^{-m_{2}x}}
	{(e^x-1)^2} \nonumber
	\end{align}
with $m_{i}\in \N$ for each $i \in \{1,2,3\}$ (see \citet{Rack_Suquet_Op_2011}).\\
We calculate the autocovariance function of $(X_{k}(r))_{k \geq 1}$ and $(X_{k}(s))_{k \geq 1}$, which are stationary for fixed $r$, $s$. Using $(j+1)^{-D}f=\{(j+1)^{-d(s)}f(s) | s \in \Sg \}$ we get
	\begin{align} \label{equality_autocovariance}
	\gamma_{h}(r,s)
	:=E(X_{0}(r)\overline{X_{h}(s)})	
	=&\sigma(r,s)\sum_{j=0}^{\infty}(j+1)^{-d(r)}(j+1+h)^{-\overline{d(s)}}.
	\end{align}

Before we present the proofs we cite some helpful theorems. First we refer to \citet{Cremers_Kadelka_Weak_Con}) for a theorem, which gives sufficient conditions for the weak convergence of random sequences with paths in $L_{p}(\mu,\B)$.
\begin{theorem} H. Cremers und D. Kadelka	\label{theorem_weak convergence_Cremers_Kadelka}
Let $(Y_{n})_{ n \in \N }$ be a sequence of stochastic processes with paths in $\mathcal{L}_{p}(\mu,\B)$. Then $Y_{n}\overset{D}{\rightarrow}Y$ as $n \to \infty$, if
	\begin{itemize}
	\item[(I)]
	the finite-dimensional distributions of $Y_{n}$ converge weakly to those of $Y$ almost everywhere, i.e.
		\begin{align*}
		(Y_{n}(s_{1}),...,Y_{n}(s_{k})) \overset{D}{\rightarrow} (Y(s_{1}),...,Y(s_{k}))
		\end{align*}
	for each $k\in \N$ and almost all $s_{1},...,s_{k}$ in $\Sg$,
	\item[(II)]
	$ \lim_{n \to \infty} E \|Y_{n}(s)\|^p  = E\|Y(s)\|^p $ for each $s \in \Sg$ and $n \in \N$,
	\item[(III)]
	there exists a square $\mu$-integrable function $f: \Sg \to \R$ such that $ E \|Y_{n}(s)\|^p \leq f(s)$ for each $ s\in \Sg$.
	\end{itemize}
\end{theorem}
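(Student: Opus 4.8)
The plan is to prove weak convergence in $L_{p}(\mu,\B)$ by the classical route of testing against a class of functionals that separates probability laws, thereby reducing the statement to a Vitali-type convergence theorem for integral functionals in which hypotheses (II) and (III) supply exactly the uniform integrability needed to upgrade the pointwise-in-$s$ distributional convergence (I) to convergence of $\mu$-integrals.

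First I would record a uniform moment bound and check that the candidate limit is a genuine $L_{p}(\mu,\B)$-valued element. By Tonelli and (III), $\sup_{n} E\|Y_{n}\|_{L_{p}}^{p}=\sup_{n}\int_{\Sg} E\|Y_{n}(s)\|^{p}\,\mu(ds)\le \int_{\Sg} f\,d\mu<\infty$; combining (II) with this bound gives $E\|Y(s)\|^{p}=\lim_{n} E\|Y_{n}(s)\|^{p}\le f(s)$ for $\mu$-a.e.\ $s$, whence $E\|Y\|_{L_{p}}^{p}\le \int_{\Sg} f\,d\mu<\infty$. In particular $\{\mathcal{L}(Y_{n})\}$ has uniformly bounded $p$-th moments, and, writing $\Sg=\bigcup_{m}\Sg_{m}$ along a finite-measure exhaustion, the domination $\int_{\Sg\setminus\Sg_{m}} f\,d\mu\to 0$ controls the spatial tails uniformly in $n$; a Markov estimate then yields tightness of $(Y_{n})$ in $L_{p}(\mu,\B)$, so by Prohorov it is relatively compact.

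The core step is to identify every limit point as $Y$. It suffices to show that for each bounded continuous $\phi$ (and, for the joint statement, finitely many such evaluated over finitely many measurable pieces) the integral functional $\int_{\Sg}\phi(Y_{n}(s))\,\mu(ds)$ converges in distribution to $\int_{\Sg}\phi(Y(s))\,\mu(ds)$, since such functionals separate probability measures on $L_{p}(\mu,\B)$; combined with the tightness above this pins down the unique limit point and gives $Y_{n}\overset{D}{\to}Y$. The mechanism is a two-stage uniform-integrability argument. By (I) and the continuous mapping theorem, $Y_{n}(s)\overset{D}{\to}Y(s)$ and $\|Y_{n}(s)\|^{p}\overset{D}{\to}\|Y(s)\|^{p}$ for a.e.\ $s$; feeding (II) into the standard lemma that convergence in distribution together with convergence of $p$-th moments forces uniform integrability shows that $\{\|Y_{n}(s)\|^{p}\}_{n}$ is uniformly integrable for a.e.\ fixed $s$, which upgrades the distributional convergence to $E\phi(Y_{n}(s))\to E\phi(Y(s))$ for every continuous $\phi$ of at most $p$-th order growth. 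Finally (III) furnishes a $\mu$-integrable envelope permitting the limit to be carried through the $s$-integration, uniformly in $n$, which delivers the convergence of the integral functionals.

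I expect the main obstacle to be the simultaneous interchange of the two limiting operations, namely refining and truncating the spatial integral over $\Sg$ and letting $n\to\infty$ in distribution. This is precisely where (II) and (III) are indispensable and must be used in tandem: (III) bounds the oscillation and the tail of $\int_{\Sg}\phi(Y_{n}(s))\,\mu(ds)$ uniformly in $n$ by a fixed $\mu$-integrable function, while (II) prevents any loss of mass in the pointwise-in-$s$ passage to the limit by forcing pointwise uniform integrability in $\omega$. Making the uniform-in-$n$ spatial approximation quantitative, and verifying that the chosen integral functionals genuinely separate the laws on $L_{p}(\mu,\B)$, is the delicate technical heart of the argument.
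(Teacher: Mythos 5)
A preliminary remark: the paper does not prove this statement at all --- it is quoted as an external result of Cremers and Kadelka --- so your attempt can only be measured against the standard proof of that theorem, and against correctness.

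Your architecture has a fatal gap at the tightness step. You claim that the uniform moment bound $\sup_{n} E\|Y_{n}\|_{L_{p}}^{p}\le\int_{\Sg} f\,d\mu$ together with a Markov estimate and an exhaustion of $\Sg$ by sets of finite measure yields tightness of the laws of $(Y_{n})$ in $L_{p}(\mu,\B)$. Markov's inequality only shows that the laws put mass $1-\epsilon$ on balls $\{\|y\|_{L_{p}}\le R\}$, and in the infinite-dimensional space $L_{p}(\mu,\B)$ closed balls are not compact; tightness requires compact sets, which by Kolmogorov--Riesz-type criteria demands in addition a uniform-in-$n$ control of oscillations in $s$, and nothing of that kind follows from (I)--(III) (they contain no regularity-in-$s$ hypothesis whatsoever). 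Since your identification-of-limit-points step presupposes relative compactness via Prohorov, this gap brings down the whole plan. It is precisely because tightness is not directly accessible from (I)--(III) that the known proof avoids Prohorov entirely.

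The identification mechanism is also too weak, even granting tightness. What your two-stage uniform-integrability argument actually delivers is $E\phi(Y_{n}(s))\to E\phi(Y(s))$ for a.e.\ $s$, and then, via the envelope (III), convergence of the \emph{expectations} $E\int_{A}\phi(Y_{n}(s))\,\mu(ds)\to E\int_{A}\phi(Y(s))\,\mu(ds)$. That is not convergence in distribution of the random variables $\int_{A}\phi(Y_{n}(s))\,\mu(ds)$; it only pins down averaged one-dimensional marginals, which do not determine the law of a process (compare $Y(s)\equiv\xi$ with a process having the same marginals but a different dependence structure across $s$). Notice that the joint f.d.d.\ hypothesis (I) for $k\ge 2$ is never actually used in your mechanism. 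Moreover, without the localization to sets $A$, the functionals $\int_{\Sg}\phi(y(s))\,\mu(ds)$ do not even separate laws: two distinct rearrangements of the same function give identical values for every $\phi$. The proof that works (essentially Cremers--Kadelka's) replaces both steps by an approximation argument: set $Y_{n}^{\mathcal{P}}(s)=\sum_{i}1_{A_{i}}(s)Y_{n}(\sigma_{i})$ for a partition $\mathcal{P}=\{A_{i}\}$ with representative points $\sigma_{i}$; hypothesis (I) plus continuity of the map $(y_{1},\ldots,y_{m})\mapsto\sum_{i}1_{A_{i}}y_{i}$ from $\B^{m}$ into $L_{p}(\mu,\B)$ gives $Y_{n}^{\mathcal{P}}\overset{D}{\to}Y^{\mathcal{P}}$ for a.e.\ choice of representatives; hypotheses (II)+(III) --- through exactly the uniform integrability you set up, applied to $E\|Y_{n}(s)-Y_{n}(\sigma)\|^{p}$ and dominated by $2^{p-1}(f(s)+f(\sigma))$ --- show that the $L_{p}$-mean approximation error $E\|Y_{n}-Y_{n}^{\mathcal{P}}\|_{L_{p}}^{p}$ becomes small uniformly in $n$ along refining partitions; Billingsley's approximation theorem for weak convergence then concludes. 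Your uniform-integrability computations would be reusable inside that scheme, but the surrounding architecture (Prohorov plus separating functionals) has to be abandoned.
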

Next we refer to \citet{Rack_Suquet_Op_2011} for a theorem, which gives sufficient conditions for the weak convergence of linear processes with values in a Hilbert space. Let $\Hi$ and $\mathbb{E}$ be two Hilbert spaces and $(\varepsilon_{j})_{ j \in \Z }$ a sequence of independent, identically distributed random variables with values in $\mathbb{E}$.
Define $(X_{n})_{ n \in \Z }$ with
		\begin{align}	\label{Prozessdarstellung_für_Beweis}
		X_n=\sum_{j\in \Z} A_{nj} \varepsilon_{j}
		\end{align}
and $A_{nj} \in L(\Hi,\mathbb{E})$. Now, we define a second process $(Y_{n})_{ n \in \Z }$ with
		\begin{align*}
		Y_n=\sum_{j\in \Z} A_{nj} \tilde{\varepsilon}_{j},
		\end{align*}
where $A_{nj}$ is the same operator as above and $\tilde{\varepsilon}_{j}$ is a sequence of Gaussian random elements with values in $\mathbb{E}$, zero-mean and the same covariance operator as $\varepsilon_{j}$.
\vspace*{0.2cm}\\
Before we state Ra\v{c}kauskas and Suquet`s lemma we need a definition of a metric on the space of probability measures on Hilbert spaces.
	\begin{definition} 
	Let $X,Y$ be $\Hi$-valued random variables, then the metric $\varrho_{k}$ is defined by 
		\begin{align*}
		\varrho_{k} (X,Y)=\sup_{f \in F_{k}} \left| Ef(X) - Ef(Y) \right|,
		\end{align*}
	where $F_{k}$ is the set of all $k$ times Fr\`{e}chet differentiable functions $f:\Hi\rightarrow \R$ such that\\
	$\sup_{x \in \Hi}|f^{(i)}(x)|\leq 1$ for $i=0,...,k$.
	\end{definition}
	No we are able to present the lemma.
		\begin{lemma} \label{Lemma_Bedingungen_Operator}
		If the conditions  
			\begin{align} \label{Bedingungen_Operator}
			\lim_{n \to \infty} \sup_{j \in \Z} \|A_{nj}\|_{op}=0 
			\hspace*{0.2cm} \text{ and } \hspace*{0.2cm}
			\limsup_{n \to \infty} \sum_{j \in \Z} \|A_{nj}\|_{op}^2<\infty
			\end{align}
		are fulfilled, then
			\begin{align*}
			\lim_{n \to \infty} \varrho_{3} (X_{n},Y_{n})=0.
			\end{align*}
		\end{lemma}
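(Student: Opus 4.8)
The plan is to prove this by a Lindeberg-type swapping argument tailored to the smooth metric $\varrho_{3}$. Since every $f\in F_{3}$ has derivatives up to order three bounded by $1$, it suffices to bound $|Ef(X_{n})-Ef(Y_{n})|$, and the idea is to replace the innovations $\varepsilon_{j}$ by their Gaussian analogues $\tilde{\varepsilon}_{j}$ one index at a time. First I would fix $n$, take the sequences $(\varepsilon_{j})_{j\in\Z}$ and $(\tilde{\varepsilon}_{j})_{j\in\Z}$ mutually independent, and for each $j$ introduce the hybrid sum $U_{j}=\sum_{i<j}A_{ni}\tilde{\varepsilon}_{i}+\sum_{i>j}A_{ni}\varepsilon_{i}$, which is independent of the pair $(\varepsilon_{j},\tilde{\varepsilon}_{j})$. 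A telescoping over $j$ then gives $Ef(X_{n})-Ef(Y_{n})=\sum_{j\in\Z}\big(Ef(U_{j}+A_{nj}\varepsilon_{j})-Ef(U_{j}+A_{nj}\tilde{\varepsilon}_{j})\big)$, where the summability furnished by the second condition in (\ref{Bedingungen_Operator}) justifies truncating to a finite index set and passing to the limit.

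For a single summand I would Taylor expand $f$ about $U_{j}$ to second order in the increment $y=A_{nj}\varepsilon_{j}$ (respectively $y=A_{nj}\tilde{\varepsilon}_{j}$). Conditioning on $U_{j}$ and using independence, the zeroth order terms cancel, the first order terms vanish because $E\varepsilon_{j}=E\tilde{\varepsilon}_{j}=0$, and the second order terms cancel because $\varepsilon_{j}$ and $\tilde{\varepsilon}_{j}$ carry the same covariance operator. Hence each summand reduces to the difference of the two third order Taylor remainders. Since $\sup_{x}|f''(x)|\le 1$ and $\sup_{x}|f'''(x)|\le 1$, the remainder after the second order expansion is controlled simultaneously by $\|y\|^{2}$ and by $\tfrac16\|y\|^{3}$, so each term is bounded by $E\min\{\|A_{nj}\varepsilon_{j}\|^{2},\tfrac16\|A_{nj}\varepsilon_{j}\|^{3}\}+\tfrac16 E\|A_{nj}\tilde{\varepsilon}_{j}\|^{3}$.

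The Gaussian contribution is the easy part: by Fernique's theorem $E\|\tilde{\varepsilon}_{0}\|^{3}<\infty$, so $\sum_{j}E\|A_{nj}\tilde{\varepsilon}_{j}\|^{3}\le\big(\sup_{j}\|A_{nj}\|_{op}\big)E\|\tilde{\varepsilon}_{0}\|^{3}\sum_{j}\|A_{nj}\|_{op}^{2}\to 0$ directly from the two conditions in (\ref{Bedingungen_Operator}). The main obstacle is the term involving $\varepsilon_{j}$, for which only $E\|\varepsilon_{0}\|^{2}<\infty$ is available, so the third moment need not exist; this is exactly where the classical Lindeberg truncation must enter. For fixed $\delta>0$ I would split, using $\|A_{nj}\varepsilon_{j}\|\le\|A_{nj}\|_{op}\|\varepsilon_{j}\|$, as $E\min\{\|A_{nj}\varepsilon_{j}\|^{2},\tfrac16\|A_{nj}\varepsilon_{j}\|^{3}\}\le\tfrac{\delta}{6}\|A_{nj}\|_{op}^{2}E\|\varepsilon_{0}\|^{2}+\|A_{nj}\|_{op}^{2}E\big(\|\varepsilon_{0}\|^{2}\mathbf{1}\{\|\varepsilon_{0}\|>\delta/\sup_{i}\|A_{ni}\|_{op}\}\big)$. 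Summing over $j$ and invoking $\sup_{i}\|A_{ni}\|_{op}\to 0$ together with dominated convergence (the truncation level diverges while $E\|\varepsilon_{0}\|^{2}<\infty$), the second piece tends to $0$, so $\limsup_{n}\varrho_{3}(X_{n},Y_{n})\le\tfrac{\delta}{6}E\|\varepsilon_{0}\|^{2}\limsup_{n}\sum_{j}\|A_{nj}\|_{op}^{2}$. Since $\delta>0$ is arbitrary and the last $\limsup$ is finite by (\ref{Bedingungen_Operator}), the limit is $0$, which is the assertion.
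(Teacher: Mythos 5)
Your proof is correct. Note that the paper does not actually prove this lemma at all: it is quoted from \citet{Rack_Suquet_Op_2011} and used as a black box, so there is no in-paper argument to compare against. Your Lindeberg swapping scheme --- telescoping over hybrid sums $U_{j}$, cancelling the zeroth-, first- and second-order Taylor terms via independence, zero mean and equality of covariance operators, bounding the remainder by $\min\{\|y\|^{2},\tfrac{1}{6}\|y\|^{3}\}$, handling the Gaussian third moments via Fernique together with $\sup_{j}\|A_{nj}\|_{op}\to 0$, and running the classical truncation at level $\delta/\sup_{i}\|A_{ni}\|_{op}$ for the innovations with only second moments --- is exactly the standard argument behind results stated in the $\varrho_{3}$ metric, and every step checks out: the $L^{2}$-convergence of $\sum_{j}A_{nj}\varepsilon_{j}$ guaranteed by the second condition in (\ref{Bedingungen_Operator}) together with the Lipschitz bound on $f\in F_{3}$ does justify passing the telescoping identity from finite index sets to $\Z$, and the final bound $\limsup_{n}\varrho_{3}(X_{n},Y_{n})\le\tfrac{\delta}{6}E\|\varepsilon_{0}\|^{2}\limsup_{n}\sum_{j}\|A_{nj}\|_{op}^{2}$ with $\delta$ arbitrary delivers the claim. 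In effect you have supplied a self-contained proof of an ingredient the paper only cites.
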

Referring to \citet{Gine_Leon_On_the_central_limit_theorem_in_Hilbert_space}, the processes have the same convergence behaviour if $\lim_{n \to \infty} \varrho_{3} (X_{n},Y_{n})=0$, since the metric induces the weak topology on the set of probability measures on $\Hi$.

\begin{proof} [Proof of theorem~\ref{theorem_complex_multiplication_operator_central_limit}]
We start with the main part of the proof. At the same time this is the part, which completely differs from the real-valued case.
We calculate the limit behaviour of the cross-covariances of the partial sums $S_{n}$.\\
\begin{lemma} \label{Lemma_complex_operator_limit_E(SnSn)}
If $h(r) \Intervall$ and $h(s) \Intervall$, then
	\begin{align*}
	\lim_{n \to \infty} n^{d(r,s)-3}E(S_{n}(r)\overline{S_{n}(s)}) 
	=
	\frac{c(r,s)+\overline{c(s,r)}}{(3-d(r,s))(2-d(r,s))} \sigma(r,s).
	\end{align*}
\end{lemma}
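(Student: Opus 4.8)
The plan is to reduce the statement to the large-lag asymptotics of the autocovariance function $\gamma_h(r,s)$ from (\ref{equality_autocovariance}) and then to read off the limit as the value of a Riemann sum. Since $(X_k(r))_{k}$ and $(X_k(s))_{k}$ are stationary, I would first write
\begin{align*}
E(S_n(r)\overline{S_n(s)})
= \sum_{k=1}^{n}\sum_{l=1}^{n} E(X_k(r)\overline{X_l(s)})
= \sum_{h=-(n-1)}^{n-1}(n-|h|)\gamma_h(r,s),
\end{align*}
where for $h\geq 0$ the coefficient $\gamma_h(r,s)$ is given by (\ref{equality_autocovariance}) and for $h<0$ one has the companion expression $\gamma_h(r,s)=\sigma(r,s)\sum_{m=0}^{\infty}(m+1)^{-\overline{d(s)}}(m+1+|h|)^{-d(r)}$, equivalently $\gamma_h(r,s)=\overline{\gamma_{-h}(s,r)}$.

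The second step is the tail behaviour of $\gamma_h$. Approximating the defining series by an integral through the substitution $j+1=|h|x$ and controlling the error by the regularity of $x\mapsto x^{-d(r)}(x+1)^{-\overline{d(s)}}$, I expect
\begin{align*}
\gamma_h(r,s)\sim\sigma(r,s)\,c(r,s)\,h^{1-d(r,s)}\quad(h\to+\infty),
\qquad
\gamma_h(r,s)\sim\sigma(r,s)\,\overline{c(s,r)}\,|h|^{1-d(r,s)}\quad(h\to-\infty),
\end{align*}
using the integral representation $c(r,s)=\int_0^\infty x^{-d(r)}(x+1)^{-\overline{d(s)}}dx$ from (\ref{equality_beta}) and, for the negative tail, the identity $\overline{c(s,r)}=\int_0^\infty x^{-\overline{d(s)}}(x+1)^{-d(r)}dx$ obtained from (\ref{equality_Beta_function_alternative}). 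The emergence of $\overline{c(s,r)}$ rather than $c(r,s)$ in the left tail is precisely the phenomenon that distinguishes the complex-valued case from the real one.

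Third, after multiplying by $n^{d(r,s)-3}$ I would set $h=\lfloor nt\rfloor$ and treat the sum over $h$ as a Riemann sum over $t\in[-1,1]$. Using $n-|h|=n(1-|t|)+o(n)$ together with the tail asymptotics, the normalized summand converges pointwise to
\begin{align*}
\sigma(r,s)\big[c(r,s)\mathbf{1}_{\{t>0\}}+\overline{c(s,r)}\mathbf{1}_{\{t<0\}}\big](1-|t|)\,|t|^{1-d(r,s)},
\end{align*}
and, because $\operatorname{Re}(d(r,s))=h(r)+h(s)\in(1,2)$, the dominating envelope $(1-|t|)|t|^{1-(h(r)+h(s))}$ has an integrable singularity at the origin and vanishes at $t=\pm1$, so dominated convergence applies. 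Splitting the integral at $t=0$ and using $\int_0^1(1-t)t^{1-d(r,s)}dt=\tfrac{1}{2-d(r,s)}-\tfrac{1}{3-d(r,s)}=\tfrac{1}{(2-d(r,s))(3-d(r,s))}$ on each half, the two contributions add up to the claimed limit $\frac{c(r,s)+\overline{c(s,r)}}{(3-d(r,s))(2-d(r,s))}\sigma(r,s)$.

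The main obstacle is to make the passage to the limit rigorous rather than merely heuristic, i.e. to produce a uniform envelope $|\gamma_h(r,s)|\leq C_{r,s}(1+|h|)^{1-(h(r)+h(s))}$ valid for all $h$ (not just to leading order) so that the Riemann-sum terms are dominated by an integrable function after normalization, and to verify that the low-lag terms contribute nothing in the limit. For the envelope I would use the elementary comparison of $\sum_j(j+1)^{-2h(\cdot)}$ with an integral as in Lemma \ref{Lemma_convergence_in_mean_square_complex} together with the closed form of the generalized geometric sum (\ref{equality_generalized_geometrical_series}); the low-lag contribution is negligible after multiplication by $n^{d(r,s)-3}$ precisely because $h(r)+h(s)>1$.
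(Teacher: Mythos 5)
Your proposal is correct, but it takes a genuinely different route from the paper. You reduce the statement to the large-lag asymptotics $\gamma_h(r,s)\sim\sigma(r,s)c(r,s)h^{1-d(r,s)}$ (and $\overline{c(s,r)}$ for the negative tail), and then pass to the limit of a one-dimensional Riemann sum over $t\in[-1,1]$, using $\int_0^1(1-t)t^{1-d(r,s)}dt=\frac{1}{(2-d(r,s))(3-d(r,s))}$. The paper instead keeps everything exact until the very end: it uses the same three-term decomposition (its $n\gamma_0+\sum_k(n-k)\gamma_k(r,s)+\sum_k(n-k)\overline{\gamma_k(s,r)}$ is your sum over signed lags), but represents $(j+1)^{-d(r)}(j+1+k)^{-\overline{d(s)}}$ via the Laplace transform $s^{-a}=\frac{1}{\Gamma(a)}\int_0^\infty t^{a-1}e^{-st}dt$, sums the resulting geometric series in closed form via (\ref{equality_generalized_geometrical_series}), rescales $x_i\mapsto t_i/n$, and applies dominated convergence to an explicit double integral, recovering the Beta-function constant through (\ref{equality_Beta_function_alternative}). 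Your approach is more transparent about why two distinct constants $c(r,s)$ and $\overline{c(s,r)}$ appear (positive versus negative lags, which is exactly the feature distinguishing the complex case), and it is the classical route for scalar long-memory linear processes; its cost is that you must genuinely prove the asymptotic equivalence of $\gamma_h$ with explicit error control for oscillating complex powers, plus the uniform envelope $|\gamma_h(r,s)|\le C_{r,s}(1+|h|)^{1-h(r)-h(s)}$ — both doable by the integral-comparison arguments you indicate (the envelope is essentially what the paper itself uses in Lemma \ref{lemma_dominated convergence_integrable function_central_limt}). The paper's route avoids any asymptotic statement about $\gamma_h$ itself at the price of a heavier explicit computation and a more delicate construction of the dominating function for the double integral.
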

	\begin{proof}
	Changing the order of summation, we have
	\begin{align*}
	E(S_{n}(r)\overline{S_{n}(s)})
	=n\gamma_{0}(r,s)+\sum_{k=1}^{n}(n-k)\gamma_{k}(r,s)+\sum_{k=1}^{n}(n-k)\overline{\gamma_{k}(s,r)}.
	\end{align*}
	Adding the normalization, we calculate the limit for each summand. First
	\begin{align*}
	\lim_{n \to \infty} n^{d(r,s)-3}n\gamma_{0}(r,s)=0,
	\end{align*}
	since $|n^{d(r,s)-2}| \leq  n^{h(r,s)-2}$ and $h(r,s)-2 \in (-1,0)$.\\
	To prove convergence of the second summand, we calculate the Laplace transform $\mathcal{L}$ of the power function $f:[0,\infty) \to \C, t \mapsto t^{a-1}$.
	\begin{align*}
	\mathcal{L}(t^{a-1})(s)=\int_{0}^{\infty} t^{a-1} e^{-st}dt=\frac{\Gamma(a)}{s^{a}}
	\hspace*{0.2cm} \text{ for } \hspace*{0.2 cm} 
	\operatorname{Re}(s)>0 \text{ and } \operatorname{Re}(a)>0.
	\end{align*}
	Rearranging the terms, we get
	\begin{align}	\label{equality_Laplace_transform}
	s^{-a}=\frac{1}{\Gamma(a)}\int_{0}^{\infty} t^{a-1} e^{-st}dt
	\hspace*{0.2cm} \text{ for } \hspace*{0.2 cm} 
	\operatorname{Re}(s)>0 \text{ and } \operatorname{Re}(a)>0.
	\end{align}
	Combining (\ref{equality_autocovariance}) and (\ref{equality_Laplace_transform}), and applying (\ref{equality_generalized_geometrical_series}) we have
	\begin{align*}
		&n^{d(r,s)-3}\sum_{k=1}^{n}(n-k)\gamma_{k}(r,s)\\
	=&	\frac{n^{d(r,s)-3}\sigma(r,s)}{\Gamma(d(r))\Gamma(\overline{d(s)})}\sum_{k=1}^{n}(n-k)
		\int_{0}^{\infty}\int_{0}^{\infty}\sum_{j=0}^{\infty}e^{-(j+1)(x_{1}+x_{2})}e^{-kx_{2}}
		x_{1}^{d(r)-1}x_{2}^{\overline{d(s)}-1}dx_{1}dx_{2} \\
	=&	\frac{n^{d(r,s)-3}\sigma(r,s)}{\Gamma(d(r))\Gamma(\overline{d(s)})}
		\int_{0}^{\infty}\int_{0}^{\infty}\frac{1}{e^{x_{1}+x_{2}}-1}\sum_{k=1}^{n}(n-k)e^{-kx_{2}}
		x_{1}^{d(r)-1}x_{2}^{\overline{d(s)}-1}dx_{1}dx_{2}	\\
	=&	\frac{n^{d(r,s)-3}\sigma(r,s)}{\Gamma(d(r))\Gamma(\overline{d(s)})}
		\int_{0}^{\infty}\int_{0}^{\infty}\frac{1}{e^{x_{1}+x_{2}}-1}
		\frac{(n-1)e^{x_{2}}+e^{(1-n)x_{2}}-n}{(e^{x_{2}}-1)^2}
		x_{1}^{d(r)-1}x_{2}^{\overline{d(s)}-1}dx_{1}dx_{2}.
\intertext{Next, we substitute $nx_{1}=t_{1}$, $nx_{2}=t_{2}$ and take the limit by the dominated convergence theorem.}
	=&	\frac{\sigma(r,s)}{\Gamma(d(r))\Gamma(\overline{d(s)})}
		\int_{0}^{\infty}\int_{0}^{\infty}
		\frac{(n-1)e^{\frac{t_{2}}{n}}+e^{(1-n)\frac{t_{2}}{n}}-n}
		{n^{3} (e^{\frac{t_{2}}{n}}-1)^2 \left( e^{\frac{t_{1}+t_{2}}{n}}-1 \right) }
		t_{1}^{d(r)-1} t_{2}^{\overline{d(s)}-1}dt_{1}dt_{2}	\\
\overset{n \to \infty}{\longrightarrow}&
		\frac{\sigma(r,s)}{\Gamma(d(r))\Gamma(\overline{d(s)})}
		\int_{0}^{\infty}\int_{0}^{\infty}
		\frac{t_{2}+e^{-t_{2}}-1}{t_{2}^2(t_{1}+t_{2})}
		t_{1}^{d(r)-1} t_{2}^{\overline{d(s)}-1}dt_{1}dt_{2} \\
	=&	\frac{\sigma(r,s)}{\Gamma(d(r))\Gamma(\overline{d(s)})}
		\int_{0}^{\infty} x^{d(r)-1} (x+1)^{-1} dx
		\int_{0}^{\infty} (t_{2}+e^{-t_{2}}-1) t_{2}^{d(r)+\overline{d(s)}-4}dt_{2}	\\
	=&	\frac{\sigma(r,s)B(d(r),1-d(r))}{\Gamma(d(r))\Gamma(\overline{d(s)})}
		\frac{1}{3-d(r,s)}\frac{1}{2-d(r,s)}\Gamma(d(r,s)-1)
	=	\frac{\sigma(r,s)c(r,s)}{(3-d(r,s))(2-d(r,s))}.
	\end{align*}
After a repeated substitution, in this case with $t_{1}=xt_{2}$ and integration by parts twice of the second integral we applied (\ref{equality_Laplace_transform}) again.
The last step then follows from (\ref{equality_Beta_function_alternative}).	\\
Analogously we obtain the limit of the third summand by interchanging $r$ and $s$ and complex conjunction.\\
Finally, we verify that the dominated convergence theorem is applicable.
We have to prove the existence of an integrable function $g(t_{1},t_{2})$, which fulfills
	\begin{align*}
	\left\vert \frac{(n-1)e^{\frac{t_{2}}{n}}+e^{(1-n)\frac{t_{2}}{n}}-n}
		{n^{3} (e^{\frac{t_{2}}{n}}-1)^2 \left( e^{\frac{t_{1}+t_{2}}{n}}-1 \right) }
		t_{1}^{d(r)-1} t_{2}^{\overline{d(s)}-1}\right \vert
		\leq g(t_{1},t_{2})
	\end{align*}
for each $n \geq 1$. The numerator is obviously non-negative. The same holds for the denominator, since the function $n(1-e^{-\frac{t_{2}}{n}})$ monotone in $n$
	\begin{align*}
	1-e^{-t_{2}} \leq n(1-e^{-\frac{t_{2}}{n}})
	\end{align*}
holds, it follows
	\begin{align*}
	(n-1)e^{\frac{t_{2}}{n}}+e^{(1-n)\frac{t_{2}}{n}}-n \geq 0.
	\end{align*}
Using the inequalities $e^{-x} \geq 1-x$ and $x \leq e^x-1$ for each value $x \in \R$, we obtain
	\begin{align*}
	&\frac{(n-1)e^{\frac{t_{2}}{n}}+e^{(1-n)\frac{t_{2}}{n}}-n}
		{n^{3} (e^{\frac{t_{2}}{n}}-1)^2 \left( e^{\frac{t_{1}+t_{2}}{n}}-1 \right) }	
	\leq
	\frac{e^{\frac{t_{2}}{n}}}{n^{2}(e^{\frac{t_{2}}{n}}-1)^2}\frac{t_{2}-1+e^{-t_{2}}}
		{n\left( e^{\frac{t_{1}+t_{2}}{n}}-1 \right) }	\\
	\leq&
	\frac{e^{\frac{t_{2}}{n}}}{n^{2}(e^{\frac{t_{2}}{n}}-1)^2}\frac{t_{2}-1+e^{-t_{2}}}
		{t_{1}+t_{2}}
	\leq
	\frac{t_{2}-1+e^{-t_{2}}}
		{t_{2}^2(t_{2}+t_{1})}.
	\end{align*}
In the final step. we have applied the inequality
	\begin{align*}
	\frac{e^{\frac{x}{n}}}{n^2(e^{\frac{x}{n}}-1)^2}\leq \frac{1}{x^2}.
	\end{align*}
Using the power series representation we prove the inequality $e^{\frac{x}{2n}}\frac{x}{n} \leq e^{\frac{x}{n}}-1$ by
	\begin{align} \label{inequality_power_series_rep}
	e^{\frac{x}{2n}}\frac{x}{n}
	=\sum_{k=0}^{\infty} \frac{(\frac{x}{2n})^{k}}{k!}\frac{x}{n}
	=\sum_{j=1}^{\infty} \frac{\frac{j}{2^{j-1}}(\frac{x}{n})^{j}}{j!}
	\leq \sum_{j=1}^{\infty} \frac{(\frac{x}{n})^{j}}{j!}
	= e^{\frac{x}{n}}-1.
	\end{align}
All in all we obtain
	\begin{align*}
	\left\vert \frac{(n-1)e^{\frac{t_{2}}{n}}+e^{(1-n)\frac{t_{2}}{n}}-n}
		{n^{3} (e^{\frac{t_{2}}{n}}-1)^2 \left( e^{\frac{t_{1}+t_{2}}{n}}-1 \right) }
		t_{1}^{d(r)-1} t_{2}^{\overline{d(s)}-1}\right \vert
		\leq 
		\frac{t_{2}-1+e^{-t_{2}}}
		{t_{2}^2(t_{2}+t_{1})}t_{1}^{h(r)-1} t_{2}^{\overline{h(s)}-1}=:g(t_{1},t_{2})
	\end{align*}
for each $n \geq 1$. The integrability follows from the rewritten limit above.
	\end{proof}
To prove the weak convergence we use theorem \ref{theorem_weak convergence_Cremers_Kadelka}.
For the first point, we rewrite the process in form of (\ref{Prozessdarstellung_für_Beweis}). 
Denote
	\begin{align*}	v_{j}(s)=
		\begin{cases}
		(j+1)^{-d(s)}	&\text{, if } j\geq0	\\
		0				&\text{, if } j<0
		\end{cases}
	\end{align*}
and rewrite the partial sums as
	\begin{align}
	S_{n}(s)
	= \sum_{k=1}^{n} \sum_{j=0}^{\infty} v_{j}(s) \varepsilon_{k-j}(s)
	=\sum_{j=-\infty}^{n +1} a_{nj}(s) \varepsilon_{j}(s) \label{Darstellung_Partialsummenprozess}
	\end{align}
with $a_{nj}(s)=\sum_{k=1}^{n} v_{k-j}(s)$. We define $z_{n}(s)=n^{d(s)-\frac{3}{2}}$ and we consider for $s_{1},...,s_{p} \in \Sg$ the sequence of random vectors
	\begin{align*}
	(n^{-H}S_{n}(s_{1}),...,n^{-H}S_{n}(s_{p}))^t=\sum_{j=-\infty}^{\infty} A_{nj} \varepsilon_{j}
	\end{align*}
with $A_{nj}=\operatorname{diag}(z_{n}^{-1}(s_{1})a_{nj}(s_{1}),...,z_{n}^{-1}(s_{p})a_{nj}(s_{p}))$ and $\varepsilon_{j}=(\varepsilon_{j}(s_{1}),...,\varepsilon_{j}(s_{p}))^{t}$.
\begin{lemma}
If $h(s) \Intervall$, the sequence of operators $A_{nj}$ fulfills the conditions (\ref{Bedingungen_Operator}).
\end{lemma}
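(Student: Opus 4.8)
The plan is to reduce the operator conditions (\ref{Bedingungen_Operator}) to scalar estimates on the coefficients $a_{nj}(s)$ and then to settle these by a Riemann-sum comparison. First I would exploit that $A_{nj}$ acts diagonally on $\C^{p}$, so that its operator norm is the largest modulus among the diagonal entries,
	\begin{align*}
	\|A_{nj}\|_{op}=\max_{1\leq i \leq p}|z_{n}^{-1}(s_{i})a_{nj}(s_{i})|.
	\end{align*}
Since $p$ is finite, the maximum over $i$ may be pulled out of both conditions, so it suffices to prove, for each fixed $s$ with $h(s)\Intervall$, that $\sup_{j}|z_{n}^{-1}(s)a_{nj}(s)|\to 0$ and that $\sum_{j}|z_{n}^{-1}(s)a_{nj}(s)|^{2}$ stays bounded as $n\to\infty$. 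Throughout I would use $|z_{n}^{-1}(s)|=n^{h(s)-\frac{3}{2}}$ and $|m^{-d(s)}|=m^{-h(s)}$.

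Next I would make $a_{nj}(s)=\sum_{k=1}^{n}v_{k-j}(s)$ explicit. The surviving terms give $a_{nj}(s)=\sum_{m=1}^{n-j+1}m^{-d(s)}$ for $1\leq j\leq n$, $a_{nj}(s)=\sum_{m=2-j}^{n-j+1}m^{-d(s)}$ for $j\leq 0$, and $a_{nj}(s)=0$ for $j>n$. Passing to moduli and comparing these partial sums of $m^{-h(s)}$ with $\int x^{-h(s)}\,dx$, which is legitimate since $h(s)\in(\frac{1}{2},1)$, I expect a constant $C$ with $|a_{nj}(s)|\leq C\,(n-j+1)^{1-h(s)}$ for $1\leq j\leq n$, while for $j=-\ell$ with $\ell\geq 0$ the sharper form $|a_{nj}(s)|\leq C\,[(n+1+\ell)^{1-h(s)}-(1+\ell)^{1-h(s)}]$ holds, which decays like $n\,\ell^{-h(s)}$ for large $\ell$. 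By subadditivity of $x\mapsto x^{1-h(s)}$ the latter is in any case at most $C\,n^{1-h(s)}$, so $\sup_{j}|a_{nj}(s)|\leq C\,n^{1-h(s)}$ uniformly.

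With these bounds the first condition is immediate: $\sup_{j}|z_{n}^{-1}(s)a_{nj}(s)|\leq C\,n^{h(s)-\frac{3}{2}}n^{1-h(s)}=C\,n^{-\frac{1}{2}}\to 0$. For the second I would write $\sum_{j}|z_{n}^{-1}(s)a_{nj}(s)|^{2}=n^{2h(s)-3}\sum_{j}|a_{nj}(s)|^{2}$ and split the inner sum. The block $1\leq j\leq n$ contributes $\sum_{m=1}^{n}m^{2(1-h(s))}$, of order $n^{3-2h(s)}$ because $2(1-h(s))>-1$. The block $j\leq 0$ I would split at $\ell\approx n$: for $\ell\leq n$ the uniform bound $C\,n^{1-h(s)}$ gives a contribution of order $n\cdot n^{2(1-h(s))}=n^{3-2h(s)}$, while for $\ell>n$ the decaying difference bound gives the convergent tail $\sum_{\ell>n}n^{2}\ell^{-2h(s)}$, again of order $n^{3-2h(s)}$ since $2h(s)>1$. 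Multiplying by $n^{2h(s)-3}$ leaves a bounded quantity, which is the second condition.

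The hard part will be the uniform control of $a_{nj}(s)$ across the whole index range together with the exact cancellation of exponents underlying the second condition: the growth of order $n^{3-2h(s)}$ of $\sum_{j}|a_{nj}(s)|^{2}$ must offset precisely the decay $n^{2h(s)-3}$ of the squared normalization. The delicate point is the infinite tail $j\to-\infty$, where the crude bound $C\,n^{1-h(s)}$ summed over infinitely many $\ell$ diverges and one genuinely needs the decay $n\,\ell^{-h(s)}$ of the difference form. It is exactly here that both ends of the hypothesis $h(s)\Intervall$ enter: $h(s)<1$ produces the divergent growth of the coefficients, and $h(s)>\frac{1}{2}$ renders the tail square-summable.
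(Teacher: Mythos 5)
Your proof is correct, and for the first condition in (\ref{Bedingungen_Operator}) it coincides with the paper's: both reduce $\|A_{nj}\|_{op}$ to the maximum of the diagonal moduli and bound $\sup_j|a_{nj}(s)|$ by $\sum_{m\le n}m^{-h(s)}\sim n^{1-h(s)}/(1-h(s))$, so that the normalization leaves $n^{-1/2}$. For the second condition, however, you take a genuinely different route. The paper does not estimate $\sum_j|a_{nj}(s)|^2$ directly at all: it uses the identity $E|S_n(s)|^2=\sigma^2(s)\sum_{j}|a_{nj}(s)|^2$ (the $\varepsilon_j(s)$ being i.i.d.) to convert the sum into the normalized variance $n^{2h(s)-3}\sigma^{-2}(s)E|S_n(s)|^2$, whose limit $\bigl(c(s)+\overline{c(s)}\bigr)/\bigl((3-2h(s))(2-2h(s))\bigr)$ was already computed in Lemma \ref{Lemma_complex_operator_limit_E(SnSn)} via the Laplace-transform representation of the autocovariances. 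That argument is two lines long and yields the exact limit, but it leans on the heavy machinery of the covariance lemma (and, as written, on $\sigma^2(s)>0$, which is harmless since the case $\sigma^2(s)=0$ is trivial). Your argument is self-contained and elementary: the integral-comparison bounds $|a_{nj}(s)|\le C(n-j+1)^{1-h(s)}$ for $1\le j\le n$ and $|a_{n,-\ell}(s)|\le C\bigl[(n+1+\ell)^{1-h(s)}-(1+\ell)^{1-h(s)}\bigr]\lesssim n\ell^{-h(s)}$ for the tail are exactly right, the three blocks each contribute $O(n^{3-2h(s)})$, and you correctly identify that $h(s)>\tfrac12$ is what makes the tail $\sum_{\ell>n}n^2\ell^{-2h(s)}$ square-summable while $h(s)<1$ governs the growth of the coefficients. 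The only price you pay is the extra bookkeeping over the index ranges, and you lose the identification of the exact limiting value of $\sum_j\|A_{nj}\|_{op}^2$, which the lemma does not require anyway.
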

	\begin{proof}
	Since $\|A_{nj}\|_{op}=\max_{1\leq i \leq q} | n^{d(s_{i})-\frac{3}{2}}a_{nj}(s_{i})|$, first using the triangle inequality
		\begin{align*}
		 \sup_{j \in \Z}	| n^{d(s)-\frac{3}{2}}a_{nj}(s)|
		\leq \sup_{j \in \Z}	n^{h(s)-\frac{3}{2}}\sum_{k=1}^{n}|v_{k-j}(s)|
		\sim	 n^{h(s)-\frac{3}{2}}\frac{n^{1-h(s)}}{1-h(s)}.
		\end{align*}
Second using the relationship between the variances of the partial sums and the sequence $a_{nj}(s)$
		\begin{align*}
		  E|S_{n}(s)|^2
		=\sigma^2(s)\sum_{j=-\infty}^{n +1} |a_{nj}(s)|^2,
		\end{align*}
	 we obtain
		\begin{align*}
		\sum_{j\in \Z} \|A_{nj}\|_{op}^2
		=\sum_{j\in \Z}| n^{d(s)-\frac{3}{2}}a_{nj}(s)|^2
		=n^{2h(s)-3}\sum_{j\in \Z}|a_{nj}(s)|^2
		=n^{2h(s)-3}\frac{1}{\sigma^2(s)}E|S_{n}(s)|^2
		\end{align*}
	and using Lemma \ref{Lemma_complex_operator_limit_E(SnSn)}
		\begin{align*}
		\lim_{n \to \infty} n^{2h(s)-3}\frac{1}{\sigma^2(s)}E|S_{n}(s)|^2
		=&\frac{\overline{c(s)}+c(s)}{(3-2h(s))(2-2h(s))}.
		\end{align*}
	\end{proof}
It remains to show the convergence of the process
		\begin{align*}
		(n^{-H}\tilde{S}_{n}(s_{1}),...,n^{-H}\tilde{S}_{n}(s_{p}))
		=\sum_{j=-\infty}^{\infty} A_{nj}\tilde{\varepsilon}_{j},
		\end{align*}
which follows from Lemma \ref{Lemma_complex_operator_limit_E(SnSn)}.
\vspace*{0.2cm}\\
Setting $r=s$ in Lemma \ref{Lemma_complex_operator_limit_E(SnSn)}, we get the proof of point (II) in theorem \ref{theorem_weak convergence_Cremers_Kadelka}.\\
The following lemma provides the proof of point (III).
\begin{lemma} \label{lemma_dominated convergence_integrable function_central_limt}
There exists a $\mu$-integrable function $g$ with $|E|n^{-H}S_{n}(s)|^2| \leq g(s)$ for each $s$ in $\Sg$ and $n \in \N$, if the integrals
	\begin{align*}
	\int_{\Sg} \frac{\sigma^2(s)}{(1-h(s))^2}\mu(ds)
	\hspace*{0.2cm} \text{ and } \hspace*{0.2cm}
	\int_{\Sg} \frac{\sigma^2(s)}{(1-h(s))(2h(s)-1)}\mu(ds)
	\end{align*}	
are finite.
\end{lemma}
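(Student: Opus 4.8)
The goal is to produce a single $\mu$-integrable dominating function $g$ for the family $\bigl\{\,s\mapsto \bigl|E\,|n^{-H}S_n(s)|^2\bigr|\,\bigr\}_{n\ge 1}$. The natural strategy is to bound $E|n^{-H}S_n(s)|^2 = n^{2h(s)-3}\,E|S_n(s)|^2$ uniformly in $n$, using the representation of $E|S_n(s)|^2$ in terms of the autocovariances already computed in (\ref{equality_autocovariance}). The plan is to split $E|S_n(s)|^2$ into the diagonal term $n\gamma_0(s,s)$ and the two lagged sums $\sum_{k=1}^{n}(n-k)\gamma_k(s,s)$ and its conjugate, exactly as in Lemma~\ref{Lemma_complex_operator_limit_E(SnSn)} with $r=s$, and bound each piece by something of the form $\sigma^2(s)$ times an explicit function of $h(s)$.

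\textbf{Key steps.} First I would write, using (\ref{equality_autocovariance}) and the triangle inequality,
\begin{align*}
\bigl|E\,|n^{-H}S_n(s)|^2\bigr|
= n^{2h(s)-3}\,|E|S_n(s)|^2|
\leq n^{2h(s)-3}\,\sigma^2(s)\sum_{j=-\infty}^{n+1}|a_{nj}(s)|^2,
\end{align*}
invoking the identity $E|S_n(s)|^2=\sigma^2(s)\sum_j|a_{nj}(s)|^2$ from the preceding lemma. Here $a_{nj}(s)=\sum_{k=1}^n v_{k-j}(s)$, so $|a_{nj}(s)|\le\sum_{k=1}^n(k-j+1)^{-h(s)}$ when the summands are nonempty. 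The second step is to bound $\sum_j|a_{nj}(s)|^2$ by comparing the tail sums $\sum_{\ell\ge m}(\ell+1)^{-h(s)}$ with the integral $\int x^{-h(s)}\,dx$, which on the relevant range $h(s)\in(\tfrac12,1)$ produces powers of $n$ governed by $1-h(s)$ in the numerator and $2h(s)-1$ in the denominator; after multiplying by $n^{2h(s)-3}$, the $n$-dependence should cancel, leaving a bound
\begin{align*}
\bigl|E\,|n^{-H}S_n(s)|^2\bigr|
\leq C\,\sigma^2(s)\left(\frac{1}{(1-h(s))^2}+\frac{1}{(1-h(s))(2h(s)-1)}\right)=:g(s)
\end{align*}
for a universal constant $C$ and every $n\ge 1$. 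The third step is simply to observe that $g$ is $\mu$-integrable precisely because the two hypothesized integrals are finite, which closes the argument.

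\textbf{Main obstacle.} The delicate part is making the $n$-uniformity genuinely uniform rather than merely asymptotic: the computations in Lemma~\ref{Lemma_complex_operator_limit_E(SnSn)} established the \emph{limit} of $n^{2h(s)-3}E(S_n(r)\overline{S_n(s)})$, but here one needs a bound valid for \emph{all} $n\ge 1$, including small $n$ and values of $h(s)$ near the endpoints $\tfrac12$ and $1$. Thus I expect the careful step to be the discrete-to-integral comparison for $\sum_j|a_{nj}(s)|^2$, controlling the boundary contributions near $k=j$ and near $k=n$ with the inequalities $\tfrac{1}{2h(s)-1}\le\sum_{j\ge1}j^{-2h(s)}\le 1+\tfrac{1}{2h(s)-1}$ already recorded in the proof of Lemma~\ref{Lemma_convergence_in_mean_square_complex}, and in tracking the constant $C$ so that it does not blow up as $h(s)\to\tfrac12^+$ or $h(s)\to1^-$. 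Once the two competing factors $(1-h(s))^{-1}$ and $(2h(s)-1)^{-1}$ are correctly isolated, the integrability conclusion is immediate from the assumptions.
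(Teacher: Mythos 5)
Your proposal is correct and follows essentially the same route as the paper: reduce to the uniform-in-$n$ bound $|E|n^{-H}S_{n}(s)|^{2}|\leq C\,\sigma^{2}(s)\bigl[(1-h(s))^{-2}+((1-h(s))(2h(s)-1))^{-1}\bigr]$ via the decomposition $E|S_{n}(s)|^{2}=n\gamma_{0}(s)+\sum_{k=1}^{n}(n-k)\gamma_{k}(s)+\sum_{k=1}^{n}(n-k)\overline{\gamma_{k}(s)}$ with $r=s$, and then read off integrability of $g$ from the two hypothesized integrals. The paper carries out the lag-sum estimate through $|\gamma_{k}(s)|\leq\sigma^{2}(s)k^{1-2h(s)}c_{h}(s)$ combined with the bound $c_{h}(s)\leq(1-h(s))^{-1}+(2h(s)-1)^{-1}$ of (\ref{inequality_c(s)_in_dependence_of_h}), which is precisely the endpoint-uniform discrete-to-integral comparison you identify as the delicate step.
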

	\begin{proof}
	Rewriting the variances and using the triangular inequality, we get
		\begin{align*}
			|E|n^{-H}S_{n}(s)|^2|
		\leq n^{2h(s)-3}
			\left( n|\gamma_{0}(s)|+\sum_{k=1}^{n}(n-k)\left| \gamma_{k}(s)\right| +\sum_{k=1}^{n}(n-k)\left| \overline{\gamma_{k}(s)}\right| \right) .
		\end{align*}
		The first two summands are approxiamable as
		\begin{align*}
			n^{2h(s)-3} n|\gamma_{0}(s)|
		\leq& \sigma^2(s)\left( 1 + \frac{1}{2h(s)-1}\right)	\\
	\intertext{and}
			n^{2h(s)-3} \sum_{k=1}^{n}(n-k)\left| \gamma_{k}(s)\right|
		\leq&	n^{2h(s)-3} \sigma^2(s)\sum_{k=1}^{n}(n-k) k^{1-2h(s)}c_{h}(s)	\\
		\leq& \sigma^2(s)\frac{1}{2}\left( \frac{1}{(1-h(s))^2} + \frac{1}{(1-h(s))(2h(s)-1)}\right),
		\end{align*}
where the last inequality follows from (\ref{inequality_c(s)_in_dependence_of_h}).
The approximation of the third one works analogously and altogether we obtain
		\begin{align*}
		|E|n^{-H}S_{n}(s)|^2|
		\leq 	2\left( \frac{\sigma^2(s)}{(1-h(s))^2} + \frac{\sigma^2(s)}{(1-h(s))(2h(s)-1)}\right)=:g(s).
		\end{align*}
	\end{proof}	
The proof of the central limit theorem is completed.
	\end{proof}

	\begin{proof} [Proof of theorem~\ref{theorem_complex_multiplication_operator_functional_central_limit}]
	\allowdisplaybreaks
	We are interested in the convergence behaviour of the piecewise linear process $(\zeta_{n}(t))_{ n \in \N }$ with
	\begin{align*}
	\zeta_{n}(t)=S_{\lfloor nt \rfloor} + \{nt\} X_{ \lfloor nt \rfloor +1}
	\hspace*{0.2cm} \text{ for } \hspace{0.2cm} t\in [0,1].
	\end{align*}
With reference to \citet{Chara_Rack_Operator} the expression may be represented as a linear combination of a sequence of operators and the random process $(\varepsilon_{k})_{ k \in \Z }$. Let
		\begin{align*}	v_{j}(s)=
		\begin{cases}
		(j+1)^{-d(s)}	&\text{, if } j\geq0	\\
		0				&\text{, if } j<0
		\end{cases},
	\end{align*}
then
	\begin{align*}
	\zeta_{n}(s,t)
	=S_{\lfloor nt \rfloor}(s) + \{nt\} X_{\lfloor nt \rfloor +1}(s)
	=	\sum_{j=-\infty}^{\lfloor nt \rfloor +1} a_{nj}(s,t) \varepsilon_{j}(s)
	\end{align*}
with
		\begin{align} \label{anj}
		a_{nj}(s,t)=\sum_{k=1}^{\lfloor nt \rfloor} v_{k-j}(s) + \{nt\} v_{\lfloor nt \rfloor +1-j}(s).
		\end{align}

Next we show the relation between the limit of the normalised cross-covariances of the piecewise linear process $\zeta_{n}$ and of the normalised cross-covariances of the partial sums $S_{n}$.
\begin{lemma} \label{Lemma_limE(zetazeta)=limE(SnSn)}
If $h(s) \Intervall$ and $h(r) \Intervall$, then
	\begin{align*}
	\lim_{n \to \infty}n^{d(r,s)-3}E(\zeta_{n}(r,t)\overline{\zeta_{n}(s,u)})
	=\lim_{n \to \infty}n^{d(r,s)-3}E(S_{\lfloor nt \rfloor}(r)\overline{S_{\lfloor nu \rfloor}(s)}).
	\end{align*}
\end{lemma}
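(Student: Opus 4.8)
The plan is to exploit the decomposition $\zeta_{n}(s,t)=S_{\lfloor nt\rfloor}(s)+\{nt\}X_{\lfloor nt\rfloor+1}(s)$ and simply multiply out the cross-covariance. Writing out $E(\zeta_{n}(r,t)\overline{\zeta_{n}(s,u)})$ produces four terms,
\begin{align*}
E(\zeta_{n}(r,t)\overline{\zeta_{n}(s,u)})
=&E(S_{\lfloor nt\rfloor}(r)\overline{S_{\lfloor nu\rfloor}(s)})
+\{nu\}E(S_{\lfloor nt\rfloor}(r)\overline{X_{\lfloor nu\rfloor+1}(s)})\\
&+\{nt\}E(X_{\lfloor nt\rfloor+1}(r)\overline{S_{\lfloor nu\rfloor}(s)})
+\{nt\}\{nu\}E(X_{\lfloor nt\rfloor+1}(r)\overline{X_{\lfloor nu\rfloor+1}(s)}).
\end{align*}
The first term is already the right-hand side of the lemma, so it suffices to multiply by $n^{d(r,s)-3}$ and to show that the three remaining terms vanish as $n\to\infty$. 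Throughout I would pass to moduli using $|n^{d(r,s)-3}|=n^{h(r,s)-3}$ with $h(r,s)=h(r)+h(s)$, and use that $\{nt\},\{nu\}\in[0,1)$ enter only as bounded factors.

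For the estimates I would invoke two order bounds. Setting $r=s$ in Lemma \ref{Lemma_complex_operator_limit_E(SnSn)} shows that $n^{2h(s)-3}E|S_{n}(s)|^{2}$ converges, whence $E|S_{m}(s)|^{2}=O(m^{3-2h(s)})$; and from the autocovariance formula (\ref{equality_autocovariance}) the variance $E|X_{m}(s)|^{2}=\gamma_{0}(s,s)=\sigma^{2}(s)\sum_{j\geq0}(j+1)^{-2h(s)}$ is a finite constant, independent of $m$, since $2h(s)>1$. Applying the Cauchy-Schwarz inequality to the second term gives $|E(S_{\lfloor nt\rfloor}(r)\overline{X_{\lfloor nu\rfloor+1}(s)})|=O(n^{3/2-h(r)})$, so after multiplication by $n^{h(r,s)-3}$ it is $O(n^{h(s)-3/2})$; symmetrically the third term is $O(n^{h(r)-3/2})$; and the fourth term is bounded by a constant, hence $O(n^{h(r,s)-3})$ after normalization.

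It remains only to check the signs of the exponents, which is where the hypothesis $h(r),h(s)\in(\frac{1}{2},1)$ is used: $h(s)-\frac{3}{2}<-\frac{1}{2}$, $h(r)-\frac{3}{2}<-\frac{1}{2}$, and $h(r,s)-3<-1$, so all three normalized remainder terms tend to $0$ and the two limits coincide. I expect the only genuine bookkeeping obstacle to be the consistent passage to the real part $h(r,s)$ of the complex normalizing exponent $d(r,s)-3$, together with the observation that the fractional-part corrections are really of lower order than the leading partial-sum contribution, which scales like $n^{3-h(r,s)}$; once the two a priori bounds $E|S_{m}|^{2}=O(m^{3-2h})$ and $E|X_{m}|^{2}=O(1)$ are recorded, the Cauchy-Schwarz splitting makes the decay transparent.
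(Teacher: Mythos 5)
Your proof is correct and follows essentially the same route as the paper: expand $E(\zeta_{n}(r,t)\overline{\zeta_{n}(s,u)})$ into the four terms and show the three containing a factor $X_{\lfloor n\cdot\rfloor+1}$ are $o(n^{3-h(r,s)})$. The only (harmless) difference is in the bookkeeping: the paper bounds the mixed term directly by $n^{h(r,s)-2}\,\gamma_{0}(r,s)$ via the autocovariances, whereas you use Cauchy--Schwarz together with $E|S_{m}|^{2}=O(m^{3-2h})$ and $E|X_{m}|^{2}=O(1)$, which yields the slightly sharper rate $O(n^{h(s)-3/2})$ but the same conclusion.
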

\begin{lemma} \label{Konvergenz_Kovarianzen_Invarianzpr_Multi_komplex}
If $h(s) \Intervall$ and $h(r) \Intervall$, then
	\begin{align*}
	\lim_{n \to \infty}n^{d(r,s)-3}E(S_{\lfloor nt \rfloor}(r)\overline{S_{\lfloor nu \rfloor}(s)}) 
	=V((r,t),(s,u))
	\end{align*}
for $(r,t),(s,u) \in \Sg \times [0,1]$, where $V$ is given in (\ref{equality_function_V}).
\end{lemma}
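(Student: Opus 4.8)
The plan is to reduce the two-time covariance to the single-time computation of Lemma~\ref{Lemma_complex_operator_limit_E(SnSn)} plus one genuinely new off-diagonal estimate, using the Hermitian symmetry of $V$ to halve the work. Write $m=\lfloor nt\rfloor$ and $l=\lfloor nu\rfloor$. Since $E(S_m(r)\overline{S_l(s)})=\overline{E(S_l(s)\overline{S_m(r)})}$ and $\overline{d(r,s)}=d(s,r)$, $\overline{\sigma(s,r)}=\sigma(r,s)$, one checks directly from (\ref{equality_function_V}) that $V((r,t),(s,u))=\overline{V((s,u),(r,t))}$; hence it suffices to treat $t\geq u$, the case $t<u$ following by conjugation and interchanging the roles of $(r,t)$ and $(s,u)$. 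The case $t=u$ is Lemma~\ref{Lemma_complex_operator_limit_E(SnSn)} after the rescaling $n^{d(r,s)-3}=(l/n)^{3-d(r,s)}\,l^{d(r,s)-3}$ with $l/n\to u$. For $t>u$ I would expand the cross-covariance over the stationary autocovariances: by stationarity $E(X_a(r)\overline{X_b(s)})=\gamma_{b-a}(r,s)$, so
\[
E(S_m(r)\overline{S_l(s)})=\sum_{a=1}^{m}\sum_{b=1}^{l}\gamma_{b-a}(r,s),
\]
and, splitting according to the sign of $b-a$ via $\gamma_{-p}(r,s)=\overline{\gamma_{p}(s,r)}$ and counting multiplicities on the resulting trapezoidal index set,
\[
E(S_m(r)\overline{S_l(s)})=\underbrace{\sum_{k=0}^{l-1}(l-k)\gamma_{k}(r,s)}_{=:\Sigma_{+}}+\underbrace{l\sum_{p=1}^{m-l}\overline{\gamma_{p}(s,r)}+\sum_{p=m-l+1}^{m-1}(m-p)\overline{\gamma_{p}(s,r)}}_{=:\Sigma_{-}}.
\]

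The part $\Sigma_{+}$ carries the positive-lag (hence $c(r,s)$) contribution. Its $k=0$ term $l\gamma_{0}(r,s)$ is negligible, since its normalized modulus is $O(n^{h(r,s)-2})\to 0$ because $h(r,s)<2$; the remainder is exactly the ``second summand'' of Lemma~\ref{Lemma_complex_operator_limit_E(SnSn)} with upper limit $l$ in place of $n$. Rescaling by $(l/n)^{3-d(r,s)}\to u^{3-d(r,s)}$ then gives $n^{d(r,s)-3}\Sigma_{+}\to u^{3-d(r,s)}\,\dfrac{\sigma(r,s)c(r,s)}{(3-d(r,s))(2-d(r,s))}$.

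The real work is the limit of $\Sigma_{-}$, and I would mirror the proof of Lemma~\ref{Lemma_complex_operator_limit_E(SnSn)} step by step. Inserting the autocovariance formula (\ref{equality_autocovariance}) and representing each power through the Laplace transform (\ref{equality_Laplace_transform}) turns $\overline{\gamma_{p}(s,r)}$ into a double integral against $x_{1}^{\overline{d(s)}-1}x_{2}^{d(r)-1}$ with factor $e^{-px_{2}}/(e^{x_{1}+x_{2}}-1)$. Summing over $p$ then reduces to evaluating the ordinary geometric sum $l\sum_{p=1}^{m-l}e^{-px_{2}}$ and the generalized geometric sum $\sum_{p=m-l+1}^{m-1}(m-p)e^{-px_{2}}$, which is precisely (\ref{equality_generalized_geometrical_series}) with $m_{1}=m-l+1$, $m_{2}=m-1$, $m_{3}=m$. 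After the substitution $t_{i}=nx_{i}$ and passage to the limit by dominated convergence, the two pieces combine; using the Beta-function identity (\ref{equality_Beta_function_alternative}) with $d(r)$ and $\overline{d(s)}$ interchanged (legitimate since $d(r,s)=d(r)+\overline{d(s)}$ is symmetric under this swap) one obtains $n^{d(r,s)-3}\Sigma_{-}\to\dfrac{\sigma(r,s)\overline{c(s,r)}}{(3-d(r,s))(2-d(r,s))}\bigl(t^{3-d(r,s)}-(t-u)^{3-d(r,s)}\bigr)$. Adding the $\Sigma_{+}$ limit and recalling that $t-u>0$ makes $C(r,s;t-u)=\overline{c(s,r)}$ reproduces $V((r,t),(s,u))$ exactly.

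The main obstacle is the dominated-convergence step for $\Sigma_{-}$: unlike the triangular sum in Lemma~\ref{Lemma_complex_operator_limit_E(SnSn)}, the generalized geometric sum now produces several exponential terms with $n$-dependent coefficients spread over a trapezoid, and I must bound the full integrand uniformly in $n$ by a single $(t_{1},t_{2})$-integrable function while tracking two different scalings (the weight $l\sim nu$ in the first piece versus $m-p\sim n(t-w)$ in the second). I expect to reuse the elementary inequalities $e^{-x}\geq 1-x$, $x\leq e^{x}-1$ and the power-series bound (\ref{inequality_power_series_rep}) already employed there, but producing a clean uniform domination across the join at $p=m-l$ is the delicate point; the Beta-integral evaluations and the reassembly into $V$ are then routine.
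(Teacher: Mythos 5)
Your proposal is correct, and at its core it runs on the same machinery as the paper's proof: reduce as much as possible to Lemma~\ref{Lemma_complex_operator_limit_E(SnSn)}, and evaluate the genuinely new sum by writing $\gamma_k$ through the Laplace transform (\ref{equality_Laplace_transform}), summing the resulting (generalized) geometric series via (\ref{equality_generalized_geometrical_series}), substituting $t_i=nx_i$, and passing to the limit by dominated convergence and the Beta identity (\ref{equality_Beta_function_alternative}). The only real difference is how the double sum $\sum_{a\le \lfloor nt\rfloor}\sum_{b\le\lfloor nu\rfloor}\gamma_{b-a}(r,s)$ is partitioned before that machinery is applied. The paper (taking $t<u$) splits off the square block $E(S_{\lfloor nt\rfloor}(r)\overline{S_{\lfloor nt\rfloor}(s)})$, which it recycles wholesale from the central limit theorem to produce the $(c(r,s)+\overline{c(s,r)})t^{3-d(r,s)}$ term, and then treats the remaining rectangle by a three-piece generalized geometric sum organized around $m_n=\min(\lfloor nt\rfloor,\lfloor nu\rfloor-\lfloor nt\rfloor)$. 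You instead split by the sign of the lag, so that $\Sigma_+$ reuses only the positive-lag half of that lemma (correctly yielding $c(r,s)u^{3-d(r,s)}$ after the $(l/n)^{3-d(r,s)}$ rescaling) and $\Sigma_-$ is a new two-piece trapezoidal sum; your claimed limit $\overline{c(s,r)}\bigl(t^{3-d(r,s)}-(t-u)^{3-d(r,s)}\bigr)$ for it is the right one, and the Beta-identity bookkeeping with $d(r)$ and $\overline{d(s)}$ swapped does produce $\overline{c(s,r)}$ as you say. Your version buys a slightly cleaner structure: the Hermitian symmetry of $V$ is made explicit (the paper disposes of the reversed time order with ``analogously''), and the join at $p=m-l$ involves one fewer case than the paper's case analysis in $m_n$. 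The one step you leave open --- a uniform dominating function for the $\Sigma_-$ integrand --- is precisely the step the paper carries out in detail for its own three pieces, and the same elementary bounds ($e^{-y}\ge 1-y$, $e^{y}-1\ge y$, and (\ref{inequality_power_series_rep})) close it; the resulting majorant is integrable exactly because $1<h(r)+h(s)<2$, so there is no gap in substance.
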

	\begin{proof} We assume $t<u$ and supplement $\overline{S_{\lfloor nt \rfloor}(s)}$ in the expression of the cross-covariances.
		\begin{align*}
		E(S_{\lfloor nt \rfloor}(r)\overline{S_{\lfloor nu \rfloor}(s)})
		=E(S_{\lfloor nt \rfloor}(r)\overline{S_{\lfloor nt \rfloor}(s)}) + 
		E(S_{\lfloor nt \rfloor}(r)(\overline{S_{\lfloor nu \rfloor}(s)-S_{\lfloor nt \rfloor}(s)}) 
		\end{align*}
	Using Lemma \ref{Lemma_complex_operator_limit_E(SnSn)}, we are able to calculate the limit of the first summand as
		\begin{align}	\label{Gleichung aus ZGWS}
		\lim_{n \to \infty} n^{d(r,s)-3}E(S_{\lfloor nt \rfloor}(r)\overline{S_{\lfloor nt \rfloor}(s)})
		=
		\frac{c(r,s)+\overline{c(s,r)}}{(3-d(r,s))(2-d(r,s))} \sigma(r,s)t^{3-d(r,s)}.
		\end{align}
The second therm is may be separated into three sums, where $m_{n}=\min(\lfloor nt \rfloor,\lfloor nu \rfloor-\lfloor nt \rfloor)$.
		\begin{align*}
		&E(S_{\lfloor nt \rfloor}(r)(\overline{S_{\lfloor nu \rfloor}(s)-S_{\lfloor nt \rfloor}(s)})\\
		=& 	\sum_{k=1}^{m_{n}-1}k\gamma_{k}(r,s)
			+ m_{n} \sum_{k=m_{n}}^{\lfloor nu \rfloor -m_{n}}\gamma_{k}(r,s)
			+ \sum_{k=\lfloor nu \rfloor-m_{n}+1}^{\lfloor nu \rfloor-1}(\lfloor nu \rfloor-k)\gamma_{k}(r,s)
		\end{align*}
We remind of the representation of the autocovariance function, we already used in the proof of Lemma \ref{Lemma_complex_operator_limit_E(SnSn)} by using the Laplace transform.
	\begin{align*}
	\gamma_{k}(r,s)=\int_{0}^{\infty}\int_{0}^{\infty}\frac{1}{e^{x_{1}+x_{2}}-1}e^{-kx_{2}}
		x_{1}^{d(r)-1}x_{2}^{\overline{d(s)}-1}dx_{1}dx_{2}
	\end{align*}
With the help of (\ref{equality_generalized_geometrical_series}) we get three expressions:
	\begin{align*}
		G^1_{n}(x_{2})
	:=	& \sum_{k=1}^{m_{n}-1}ke^{-kx_{2}}
	=	\frac{e^{x_{2}}-m_{n}e^{(2-m_{n})x_{2}}-(1-m_{n})e^{(1-m_{n})x_{2}}}{(e^{x_{2}}-1)^2}	\\
		G^2_{n}(x_{2})
	:=&	m_{n}\sum_{k=m_{n}}^{\lfloor nu \rfloor - m_{n}}e^{-kx_{2}}
	=	m_{n}\frac{e^{(2-m_{n})x_{2}}-e^{(1-(\lfloor nu \rfloor-m_{n}))x_{2}}-e^{(1-m_{n})x_{2}}+e^{-(\lfloor nu \rfloor-m_{n})x_{2}}}{(e^{x_{2}}-1)^2}	\\
		G^3_{n}(x_{2})
	:=&	\sum_{k=\lfloor nu \rfloor-m_{n}+1}^{\lfloor nu \rfloor-1}(\lfloor nu \rfloor-k)e^{-kx_{2}}	\\
	=&	\frac{(m_{n}-1)e^{(m_{n}-\lfloor nu \rfloor+1)x_{2}}-m_{n}e^{(m_{n}-\lfloor nu \rfloor)x_{2}}+e^{(1-\lfloor nu \rfloor x_{2})}}{(e^{x_{2}}-1)^2}
	\end{align*}
	Combining our results, adding the normalization sequence and substituting $x_{1}=\frac{t_{1}}{n}$ and $x_{2}=\frac{t_{2}}{n}$, we obtain
	\begin{align*}
	& \frac{n^{d(r,s)-3} \sigma(r,s)}{\Gamma(d(r))\Gamma(\overline{d(s)})}
		\int_{0}^{\infty}\int_{0}^{\infty}\frac{1}{e^{x_{1}+x_{2}}-1}
		\left( G^1_{n}(x_{2}) +
			G^2_{n}(x_{2})  +
			G^3_{n}(x_{2}) \right)
		x_{1}^{d(r)-1}x_{2}^{\overline{d(s)}-1}dx_{1}dx_{2}	\\
	=&n^{d(r,s)-3}\frac{\sigma(r,s)}{\Gamma(d(r))\Gamma(\overline{d(s)})}
			\int_{0}^{\infty}\int_{0}^{\infty}
			\frac{e^{\frac{t_{2}}{n}}-e^{(1-m_{n})\frac{t_{2}}{n}}-e^{(m_{n}-\lfloor nu\rfloor+1)\frac{t_{2}}{n}}+e^{(1-\lfloor nu\rfloor)\frac{t_{2}}{n}}}{(e^{\frac{t_{2}}{n}}-1)^2} \cdot\\
			& \phantom{n^{d(r,s)-3}\frac{\sigma(r,s)}{\Gamma(d(r))\Gamma(\overline{d(s)})}
			\int_{0}^{\infty}\int_{0}^{\infty}}
			\frac{1}{  e^{\frac{t_{1}+t_{2}}{n}}-1} 
			\left( \frac{t_{1}}{n}\right)^{d(r)-1} \left( \frac{t_{2}}{n}\right) ^{\overline{d(s)}-1}n^{-2}dt_{1}dt_{2}\\
	=&\frac{\sigma(r,s)}{\Gamma(d(r))\Gamma(\overline{d(s)})}
			\int_{0}^{\infty}\int_{0}^{\infty}
			\frac{e^{\frac{t_{2}}{n}}-e^{(1-m_{n})\frac{t_{2}}{n}}-e^{(m_{n}-\lfloor nu\rfloor+1)\frac{t_{2}}{n}}+e^{(1-\lfloor nu\rfloor)\frac{t_{2}}{n}}}{n^3 (e^{\frac{t_{2}}{n}}-1)^2(e^{\frac{t_{1}+t_{2}}{n}}-1)}
			t_{1}^{d(r)-1} t_{2}^{\overline{d(s)}-1}dt_{1}dt_{2}.
	\intertext{Let $m=\min(t,u-t)$, then we use the dominated convergence theorem and substitute with $t_{1}=x t_{2}$.}
	\overset{n \to \infty}{\longrightarrow}&
	\frac{\sigma(r,s)}{\Gamma(d(r))\Gamma(\overline{d(s)})}
			\int_{0}^{\infty}\int_{0}^{\infty}
			(1-e^{-mt_{2}}-e^{-(u-m)t_{2}}+e^{-ut_{2}})\frac{1}{t_{2}^{2}(t_{1}+t_{2})}
			t_{1}^{d(r)-1} t_{2}^{\overline{d(s)}-1}dt_{1}dt_{2}	\\
	&=	\frac{\sigma(r,s)B(d(r),1-d(r))}{\Gamma(d(r))\Gamma(\overline{d(s)})}
			\int_{0}^{\infty}
			(1-e^{-mt_{2}}-e^{-(u-m)t_{2}}+e^{-ut_{2}})
			t_{2}^{\overline{d(s)}+d(r)-4}dt_{2}	\\
	&=	c(r,s)\frac{\sigma(r,s)}{(3-d(r,s))(2-d(r,s))}\left( -m^{3-d(r,s)}-(u-m)^{3-d(r,s)}+u^{3-d(r,s)} \right).
	\end{align*}
In the last step we used integration twice by parts. 
Combining the limits and (\ref{Gleichung aus ZGWS}), we get
	\begin{align*}
	&\lim_{n \to \infty}
		n^{3-d(r,s)}E(S_{\lfloor nt \rfloor}(r)\overline{S_{\lfloor nu \rfloor}(s)}) \\
	=	&\frac{\sigma(r,s)}{(3-d(r,s))(2-d(r,s))}\\
		&\left[(c(r,s)+\overline{c(s,r)})t^{3-d(r,s)}+c(r,s)(-m^{3-d(r,s)}+u^{3-d(r,s)}-(u-m)^{3-d(r,s)}) \right]	\\
	=	&\frac{\sigma(r,s)}{(3-d(r,s))(2-d(r,s))}
		\left[\overline{c(s,r)}t^{3-d(r,s)}+c(r,s)(u^{3-d(r,s)} -(u-t)^{3-d(r,s)}) \right]
	\end{align*}
in case of $t<u$. Analogously for $t>u$
	\begin{align*}
	\lim_{n \to \infty}
		&n^{3-d(r,s)}E(S_{\lfloor nt \rfloor}(r)\overline{S_{\lfloor nu \rfloor}(s)}) \\
	=	&\frac{\sigma(r,s)}{(3-d(r,s))(2-d(r,s))}\left[c(r,s)u^{3-d(r,s)}+\overline{c(s,r)}(t^{3-d(r,s)} -(t-u)^{3-d(r,s)}) \right]
	\end{align*}
follows. Both cases together provide
	\begin{align*}
	\lim_{n \to \infty}
		&n^{3-d(r,s)}E(S_{\lfloor nt \rfloor}(r)\overline{S_{\lfloor nu \rfloor}(s)}) \\
	=	&\frac{\sigma(r,s)}{(3-d(r,s))(2-d(r,s))}
		\left[\overline{c(s,r)}t^{3-d(r,s)}+c(r,s)u^{3-d(r,s)}+C(r,s;t-u)|t-u|^{3-d(r,s)}\right].
	\end{align*}
We still need to prove the interchangeability of the integrals and the limit, so, using the dominated convergence theorem, we have to show the existence of an integrable function $g: \R_{\geq 0} \times \R_{\geq 0} \to \R_{\geq 0}$, such that
	\begin{align*}
	\left\vert  \frac{e^{\frac{t_{2}}{n}}-e^{(1-m_{n})\frac{t_{2}}{n}}-e^{(m_{n}-\lfloor nu\rfloor+1)\frac{t_{2}}{n}}
	+e^{(1-\lfloor nu\rfloor)\frac{t_{2}}{n}}}{n^3 (e^{\frac{t_{2}}{n}}-1)^2\left( e^{\frac{t_{1}+t_{2}}{n}}-1\right)}t_{1}^{d(r)-1} t_{2}^{\overline{d(s)}-1} \right\vert
	\leq
	g(t_{1},t_{2})
	\end{align*}
for each $n \geq 1$. Under the assumption $t<u$ the numerator is obviously non-negative. The denominator is positive, since
by case analysis in dependence of $m_{n}$	
	\begin{align*}
	&e^{\frac{t_{2}}{n}}-e^{(1-m_{n})\frac{t_{2}}{n}}-e^{(m_{n}-\lfloor nu\rfloor+1)\frac{t_{2}}{n}}
	+e^{(1-\lfloor nu\rfloor)\frac{t_{2}}{n}} \geq 0	\\
	\Leftrightarrow &
	0 \geq m_{n}-\lfloor nu\rfloor	\\
	\Leftrightarrow&	
	0 \geq
	\begin{cases}
	\lfloor nt\rfloor-\lfloor nu\rfloor	&\hspace*{0.2cm} \text{, for } m_{n}=\lfloor nt \rfloor\\
	-\lfloor nt\rfloor 							&\hspace*{0.2cm} \text{, for } m_{n}=\lfloor nu\rfloor-\lfloor nt \rfloor
	\end{cases},
	\end{align*}
which is fulfilled under the assumption $t<u$. So using (\ref{inequality_power_series_rep}), we obtain
	\begin{align*}
	& \left\vert  \frac{e^{\frac{t_{2}}{n}}-e^{(1-m_{n})\frac{t_{2}}{n}}-e^{(m_{n}-\lfloor nu\rfloor+1)\frac{t_{2}}{n}}
	+e^{(1-\lfloor nu\rfloor)\frac{t_{2}}{n}}}{n^3 (e^{\frac{t_{2}}{n}}-1)^2\left( e^{\frac{t_{1}+t_{2}}{n}}-1\right)}t_{1}^{d(r)-1} t_{2}^{\overline{d(s)}-1} \right\vert	\\
=&
	\frac{e^{\frac{t_{2}}{n}}}{n^2 (e^{\frac{t_{2}}{n}}-1)^2}\frac{1}{n\left( e^{\frac{t_{1}+t_{2}}{n}}-1\right)}(1-e^{-m_{n}\frac{t_{2}}{n}}-e^{(m_{n}-\lfloor nu\rfloor)\frac{t_{2}}{n}}
	+e^{-\lfloor nu\rfloor\frac{t_{2}}{n}})t_{1}^{h(r)-1} t_{2}^{h(s)-1}	\\
\leq&
	\frac{1-e^{-m t_{2}}-e^{(m-u) t_{2}}+e^{-u t_{2}}}{t_{2}^2(t_{2}+t_{1})}t_{1}^{h(r)-1} t_{2}^{h(s)-1}=:g(t_{1},t_{2})
	\end{align*}
for each $n \geq 1$. In the last step, we applied the inequality
	\begin{align*}
	1-e^{-m_{n}\frac{t_{2}}{n}}-e^{(m_{n}-\lfloor nu\rfloor)\frac{t_{2}}{n}}
	+e^{-\lfloor nu\rfloor\frac{t_{2}}{n}} \leq 1-e^{-m t_{2}}-e^{(m-u) t_{2}}+e^{-u t_{2}},
	\end{align*}
which holds, since
\begin{alignat*}{2} 
					&&	\frac{e^{m_{n}\frac{t_{2}}{n}}-1}{e^{m t_{2}}-1} (1-e^{(m_{n}-\lfloor nu\rfloor+mn)\frac{t_{2}}{n}})
						\leq &
						e^{m_{n}\frac{t_{2}}{n}-m t_{2}} (1-e^{(m_{n}-\lfloor nu\rfloor+mn)\frac{t_{2}}{n}})	\\
					&&	=& 
						e^{m_{n}\frac{t_{2}}{n}-m t_{2}} -e^{(2m_{n}-\lfloor nu\rfloor)\frac{t_{2}}{n}}\\
					&&	\leq &
						1-e^{(m_{n}-un+mn)\frac{t_{2}}{n}}	\\
\Rightarrow &&	1-e^{-m_{n}\frac{t_{2}}{n}}-e^{(m_{n}-\lfloor nu\rfloor)\frac{t_{2}}{n}}+e^{-\lfloor nu\rfloor\frac{t_{2}}{n}} 
		\leq & 1-e^{-m t_{2}}-e^{(m-u) t_{2}}+e^{-u t_{2}}	\\
\Leftrightarrow && (e^{m_{n}\frac{t_{2}}{n}}-1)(1-e^{(m_{n}-\lfloor nu\rfloor+mn)\frac{t_{2}}{n}}) 
		\leq & (e^{m t_{2}}-1)(1-e^{(m_{n}-un+mn)\frac{t_{2}}{n}}).
\end{alignat*} 
where the last inequality follows from $m_{n}\frac{t_{2}}{n}-m t_{2} \leq 0$ and $2m_{n}-\lfloor nu\rfloor \geq m_{n}-un+mn$.
The function $g$ is integrable, because the calculated limit above is finite.
	\end{proof}

	\begin{proof} [Proof of Lemma~\ref{Lemma_limE(zetazeta)=limE(SnSn)}]
	It holds
		\begin{align*}
			E(\zeta_{n}(r,t)\overline{\zeta_{n}(s,u)})
		=	&E(S_{\lfloor nt \rfloor}(r)\overline{S_{\lfloor nu \rfloor}(s)})+
			\{nu\}E(S_{\lfloor nt \rfloor}(r)\overline{X_{\lfloor nu \rfloor+1}(s)})+\\
			&\{nt\}E(S_{\lfloor nu \rfloor}(s)\overline{X_{\lfloor nt \rfloor+1}(r)})+
			\{nu\}\{nt\} E(X_{\lfloor nu \rfloor+1}(r)\overline{X_{\lfloor nt \rfloor+1}(s)}).
		\end{align*}
	We have to show that
		\begin{align*}
		\lim_{n \to \infty}n^{d(r,s)-3}
		&\Big(\{nu\}E(S_{\lfloor nt \rfloor}(r)\overline{X_{\lfloor nu \rfloor+1}(s)})+
		\{nt\}E(S_{\lfloor nu \rfloor}(s)\overline{X_{\lfloor nt \rfloor+1}(r)})+\\
		&\{nu\}\{nt\} E(X_{\lfloor nu \rfloor+1}(r)\overline{X_{\lfloor nt \rfloor+1}(s)})\Big)=0.
		\end{align*}
	The first summand could be estimated as
		\begin{align*}
		\Big| n^{d(r,s)-3}\{nu\}E(S_{\lfloor nt \rfloor}(r)\overline{X_{\lfloor nu \rfloor+1}(s)})\Big|
		\leq \frac{1}{n^{2-h(r,s)}} \gamma_{0}(r,s),
		\end{align*}
	the convergence to zero of the other summands follows analogously.
	\end{proof}
Combining the Lemmas \ref{Lemma_limE(zetazeta)=limE(SnSn)} and \ref{Konvergenz_Kovarianzen_Invarianzpr_Multi_komplex} yields
	\begin{align} \label{equality_Convergence_behaviour_piecewise_linear_process}
	\lim_{n \to \infty}n^{3-d(r,s)}E(\zeta_{n}(r,t)\overline{\zeta_{n}(s,u)}) 
	=V((r,t),(s,u)).
	\end{align}
Next we show the existence of a Gaussian process $\mathcal{G}$ with zero-mean and cross-covariance function $V$. Additionally we will prove that the process is operator self-similar.
\begin{lemma} \label{Lemma_hermitesch_positiv_definit}
The function $V:\mathbb{T} \times \mathbb{T}\rightarrow \C$ defined in (\ref{equality_function_V}) is hermitian and non-negative definit. 
\end{lemma}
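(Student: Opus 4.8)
The plan is to derive the two properties of $V$ from two different sources: the hermitian symmetry from a direct termwise computation on (\ref{equality_function_V}), and the non-negative definiteness from the fact, recorded in (\ref{equality_Convergence_behaviour_piecewise_linear_process}), that $V$ is the pointwise limit of genuine rescaled covariances of the piecewise linear process. Throughout I work under the standing assumption $h(s)\in(\frac12,1)$ for all $s$, which is exactly what makes the limit in (\ref{equality_Convergence_behaviour_piecewise_linear_process}) available.

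For the hermitian property I would verify $V((r,t),(s,u))=\overline{V((s,u),(r,t))}$ termwise. The two ingredients are the identities $\overline{d(s,r)}=d(r,s)$, immediate from $d(r,s)=d(r)+\overline{d(s)}$, and $\overline{\sigma(s,r)}=\sigma(r,s)$, which holds because $\sigma(r,s)=E(\varepsilon_0(r)\overline{\varepsilon_0(s)})$ is a covariance. Conjugating $V((s,u),(r,t))$ and inserting these identities, the prefactor $\sigma(r,s)/[(3-d(r,s))(2-d(r,s))]$ together with the first two bracketed terms $\overline{c(s,r)}t^{3-d(r,s)}$ and $c(r,s)u^{3-d(r,s)}$ reproduce exactly those of $V((r,t),(s,u))$. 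The only point requiring attention is the last term, where one must check $\overline{C(s,r;u-t)}=C(r,s;t-u)$; this is a short case analysis from the definition of $C$, distinguishing $t<u$ (both sides equal $c(r,s)$) from $t>u$ (both sides equal $\overline{c(s,r)}$).

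The crux is the non-negative definiteness. For fixed $n$ set
\[
V_n((r,t),(s,u)):=n^{3-d(r,s)}\,E\big(\zeta_n(r,t)\overline{\zeta_n(s,u)}\big),
\]
so that $V_n\to V$ pointwise by (\ref{equality_Convergence_behaviour_piecewise_linear_process}). The obstacle is that the normalization $n^{3-d(r,s)}$ involves both arguments and so cannot be pulled out of a quadratic form. The key observation that removes it is that, since $n$ is real and $d(r,s)=d(r)+\overline{d(s)}$,
\[
n^{3-d(r,s)}=n^{3}\,n^{-d(r)}\,\overline{n^{-d(s)}}.
\]
Consequently, for any points $(r_1,t_1),\dots,(r_m,t_m)$ and scalars $a_1,\dots,a_m\in\C$, writing $b_i:=a_i\,n^{-d(r_i)}$ gives
\[
\sum_{i,j}a_i\overline{a_j}\,V_n((r_i,t_i),(r_j,t_j))=n^{3}\sum_{i,j}b_i\overline{b_j}\,E\big(\zeta_n(r_i,t_i)\overline{\zeta_n(r_j,t_j)}\big)=n^{3}\,E\Big|\sum_{i=1}^{m}b_i\,\zeta_n(r_i,t_i)\Big|^{2}\ge 0 .
\]
Letting $n\to\infty$ in this finite sum yields $\sum_{i,j}a_i\overline{a_j}\,V((r_i,t_i),(r_j,t_j))\ge 0$, which is the asserted non-negative definiteness.

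With $V$ shown to be hermitian and non-negative definite, the existence of the zero-mean Gaussian process $\mathcal{G}$ on $\mathbb{T}$ with covariance $V$ follows from the Kolmogorov existence theorem. I expect the factorization $n^{3-d(r,s)}=n^{3}\,n^{-d(r)}\,\overline{n^{-d(s)}}$ to be the single decisive step; once it is in place, the reduction to $E|\sum_i b_i\zeta_n(r_i,t_i)|^{2}\ge 0$ and the passage to the limit are routine, as is the termwise hermitian check.
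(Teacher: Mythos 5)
Your proof follows essentially the same route as the paper's: hermitian symmetry by a termwise check (your case analysis for $\overline{C(s,r;u-t)}=C(r,s;t-u)$ is correct, and more careful than the paper's one-line ``clearly''), and non-negative definiteness by exhibiting $V$ as a pointwise limit of rescaled covariances of $\zeta_n$ and absorbing the two-argument normalization into the coefficients via $n^{3-d(r,s)}=n^{3}\,n^{-d(r)}\,\overline{n^{-d(s)}}$. That factorization is exactly the step the paper's computation relies on but leaves implicit, so making it explicit is an improvement rather than a deviation.

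One concrete point where the two arguments differ and where yours is slightly short: the convergence (\ref{equality_Convergence_behaviour_piecewise_linear_process}) is established, via Lemma \ref{Konvergenz_Kovarianzen_Invarianzpr_Multi_komplex}, only for time points in $[0,1]$, whereas the lemma asserts non-negative definiteness of $V$ on all of $\mathbb{T}\times\mathbb{T}$ with $\mathbb{T}=\Sg\times\R_{\geq 0}$. You invoke the pointwise convergence $V_n\to V$ for arbitrary $t_i\geq 0$ without comment. The paper closes this by first using the homogeneity $V((r,at),(s,au))=a^{\frac{3}{2}-d(r)}\,\overline{a^{\frac{3}{2}-d(s)}}\,V((r,t),(s,u))$ with $a=M:=\max_i t_i$, absorbing the factors $M^{\frac{3}{2}-d(s_i)}$ into the coefficients so that all time arguments lie in $[0,1]$, and only then passing to the limit of the quadratic forms. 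To complete your version you should either insert that rescaling step or remark explicitly that the proof of Lemma \ref{Konvergenz_Kovarianzen_Invarianzpr_Multi_komplex} nowhere uses $t,u\leq 1$ and hence extends to all of $\R_{\geq 0}$; as written, your appeal to (\ref{equality_Convergence_behaviour_piecewise_linear_process}) slightly outruns what has been proved. Everything else, including the concluding appeal to Kolmogorov's existence theorem for the Gaussian process $\mathcal{G}$, matches the paper.
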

	\begin{proof} Clearly $V$ is hermitian, since $\sigma(r,s)$ is hermitian.\\
	Let $N\in \N$, $\tau_{1},...,\tau_{N} \in  \mathbb{T}$, $w_{1},...,w_{N} \in \C$ and $M:=\max\{t_{1},...,t_{N}\}$ then
		\begin{align*}
			&\sum_{i=1}^{N}\sum_{j=1}^{N} w_{i}\overline{w_{j}}V(\tau_{i},\tau_{j}) \\
		=&	\sum_{i=1}^{N}\sum_{j=1}^{N} w_{i} M^{\frac{3}{2}-d(s_{i})} \overline{w_{j}} M^{\frac{3}{2}-\overline{d(s_{j})}} 
			V\left( \left( s_{i},\left( \frac{t_{i}}{M}\right)\right) ,\left( s_{j},\left( \frac{t_{j}}{M}\right)\right) \right).
	\intertext{We denote $\tilde{w}_{i}:=w_{i}M^{\frac{3}{2}-d(s_{i})}$ and use Lemma \ref{Konvergenz_Kovarianzen_Invarianzpr_Multi_komplex}}
		=&	\sum_{i=1}^{N}\sum_{j=1}^{N} \tilde{w}_{i} \overline{\tilde{w}_{j}} 
			\lim_{n\to \infty} \frac{1}{n^{3-d(s_{i},s_{j})}}
			E\left( \zeta_{n}\left( s_{i},\left( \frac{t_{i}}{M}\right)\right)
			\overline{\zeta_{n}\left( s_{j},\left( \frac{t_{j}}{M}\right)\right)} \right) 
		\geq 0.
		\end{align*}
	Since $\frac{t_{i}}{M}$ is in the unit interval $[0,1]$ and 
	$\frac{1}{n^{3-d(r,s)}}E(\zeta_{n}(r,s)\overline{\zeta_{n}(s,u)})$ is a covariance function for each $(r,t),(s,u) \in \Sg \times [0,1] $.
	\end{proof}
So there exists a Gaussian process $\mathcal{G}$ with covariance function $V$.
\begin{lemma} \label{Proposition 8}
If $h(s) \Intervall$ and the integrals
	\begin{align*}
	\int_{\Sg} \frac{\sigma^2(v)}{(1-h(v))^{2}}\mu(dv) \hspace{0.2cm} \text{ and } \hspace{0.2cm} 
	\int_{\Sg}\frac{\sigma^2(v)}{(1-h(v))(2h(v)-1)}\mu(dv)
	\end{align*}
are finite, then for each $t \in \R_{\geq0}$ the stochastic process $\{\mathcal{G}(s,t) | s \in \Sg \}$ has sample paths in $\mathcal{L}_{2}(\mu,\C)$. Furthermore $\mathcal{G}(\cdot,t)$ are Gaussian random elements with values in $L_{2}(\mu,\C)$ and the process $\{\mathcal{G}(\cdot,t) | t \in \R_{\geq 0} \}$ is Gaussian.
\end{lemma}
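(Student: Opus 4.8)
The plan is to establish the three assertions in order, handling the $L_2$-integrability of the sample paths first, since both Gaussianity claims rest on it. For fixed $t$ I would read off the second moment of $\mathcal{G}(s,t)$ directly from the covariance function $V$ in (\ref{equality_function_V}). Setting $r=s$ and $u=t$, the increment term $\vert t-u\vert^{3-d(r,s)}$ vanishes and, using $d(s,s)=2h(s)$, $\sigma(s,s)=\sigma^2(s)$ and $c(s,s)=c(s)$, one gets
\begin{align*}
E\vert\mathcal{G}(s,t)\vert^2 = V((s,t),(s,t)) = \frac{\sigma^2(s)\,(c(s)+\overline{c(s)})}{(3-2h(s))(2-2h(s))}\, t^{3-2h(s)}.
\end{align*}

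Next I would bound this using the estimates already in hand. By (\ref{inequality_realpart_c(s)}) and (\ref{inequality_c(s)_in_dependence_of_h}) we have $\tfrac12(c(s)+\overline{c(s)})\le c_h(s)\le \tfrac{1}{1-h(s)}+\tfrac{1}{2h(s)-1}$, while $3-2h(s)>1$ gives $\frac{1}{(3-2h(s))(2-2h(s))}\le\frac{1}{2(1-h(s))}$; combining these yields the pointwise bound
\begin{align} \label{inequality_bound_function_functional_central_limit}
E\vert\mathcal{G}(s,t)\vert^2 \le \left( \frac{\sigma^2(s)}{(1-h(s))^2}+\frac{\sigma^2(s)}{(1-h(s))(2h(s)-1)}\right) t^{3-2h(s)}.
\end{align}
Since $h(s)\in(\tfrac12,1)$ forces $3-2h(s)\in(1,2)$, we have $t^{3-2h(s)}\le\max\{t,t^2\}$, so by Tonelli's theorem $E\int_{\Sg}\vert\mathcal{G}(s,t)\vert^2\mu(ds)$ is bounded by $\max\{t,t^2\}$ times the sum of the two integrals assumed finite in the hypothesis. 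Finiteness of this expectation forces $\int_{\Sg}\vert\mathcal{G}(s,t)\vert^2\mu(ds)<\infty$ almost surely, i.e. the sample paths $\mathcal{G}(\cdot,t)$ lie in $\mathcal{L}_2(\mu,\C)$.

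For the second assertion I would verify that $\mathcal{G}(\cdot,t)$ is a Gaussian random element of $L_2(\mu,\C)$ in the sense recalled in Section \ref{section_proofs_of_the_main_results}, namely that $\langle\mathcal{G}(\cdot,t),f\rangle_2=\int_{\Sg}\mathcal{G}(s,t)\overline{f(s)}\mu(ds)$ is complex Gaussian for every $f\in L_2(\mu,\C)$. For a simple $f$ this pairing is a finite linear combination of integrals $\int_A\mathcal{G}(s,t)\mu(ds)$, each approximated in $L^2(P)$ by Riemann sums $\sum_j\mathcal{G}(s_j,t)\mu(B_j)$; these sums are Gaussian because the finite-dimensional distributions of $\mathcal{G}$ are Gaussian by construction (Lemma \ref{Lemma_hermitesch_positiv_definit}), and Gaussianity is preserved under $L^2(P)$-limits, so the pairing is Gaussian. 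A second approximation of a general $f\in L_2(\mu,\C)$ by simple functions, whose convergence is controlled by (\ref{inequality_bound_function_functional_central_limit}), then extends the conclusion to all $f$. The third assertion follows by the same scheme: any finite linear combination $\sum_i a_i\mathcal{G}(\cdot,t_i)$ has paths in $L_2(\mu,\C)$ by the first part, and its pairing with $f$ equals $\sum_i a_i\langle\mathcal{G}(\cdot,t_i),f\rangle_2$, which is again a limit of finite linear combinations of the jointly Gaussian family $\{\mathcal{G}(s,t_i)\}_{s,i}$, hence Gaussian.

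The main obstacle I anticipate is not the moment bound, which is routine given (\ref{inequality_realpart_c(s)})--(\ref{inequality_c(s)_in_dependence_of_h}), but the rigorous passage from the finite-dimensional Gaussianity of $\mathcal{G}$ to Gaussianity of the $L_2$-valued element $\mathcal{G}(\cdot,t)$. This requires a jointly measurable modification of $\mathcal{G}$ so that the integrals $\langle\mathcal{G}(\cdot,t),f\rangle_2$ are genuine random variables, together with a careful justification that the Riemann-sum and simple-function approximations converge in $L^2(P)$. Here I would lean on the integrability furnished by (\ref{inequality_bound_function_functional_central_limit}) and on the framework for $\mathcal{L}_p(\mu,\cdot)$-valued processes used by \citet{Cremers_Kadelka_Weak_Con} and \citet{Chara_Rack_Operator}.
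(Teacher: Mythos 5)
Your proposal is correct and follows essentially the same route as the paper: setting $r=s$, $u=t$ in $V$ to get $E|\mathcal{G}(s,t)|^2$, bounding it via (\ref{inequality_realpart_c(s)}) and (\ref{inequality_c(s)_in_dependence_of_h}) by $\max\{t,t^2\}$ times the two assumed-finite integrands, and concluding $L_2(\mu,\C)$-valued sample paths by Tonelli. The only difference is that you spell out the passage from finite-dimensional Gaussianity to Gaussianity of the $L_2$-valued element via simple-function and Riemann-sum approximation, a step the paper asserts in a single sentence; this is a welcome elaboration rather than a divergence.
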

	\begin{proof}
Using the inequalities (\ref{inequality_realpart_c(s)}) and (\ref{inequality_c(s)_in_dependence_of_h}), we get the approximation
		\begin{align} \label{inequality_bound_function_functional_central_limit}
				&E\left( \int_{\Sg} |\mathcal{G}(v,t)|^2 \mu(dv)\right)
		=	\int_{\Sg} \frac{\sigma^2(v)}{2(1-h(v))(3-2h(v))}(\overline{c(v)}+c(v))t^{3-2h(v)} \mu(dv) \\
		&\leq	\max\{t,t^2\} \left( \int_{\Sg} \frac{\sigma^2(v)}{(1-h(v))^2}\mu(dv) + 
				\int_{\Sg} \frac{\sigma^2(v)}{(1-h(v))(2h(v)-1)} \mu(dv)\right)
		<\infty. \nonumber
		\end{align}
So the paths of the stochastic process $\{\mathcal{G}(s,t) | s \in \Sg \}$ are $P$-almost surely in $L_{2}(\mu,\C)$ for each $t \in \R_{\geq 0}$ and so $\mathcal{G}(\cdot,t)$ is a Gaussian random element in $L_{2}(\mu,\C)$.
	\end{proof}
\begin{lemma} \label{lemma_operator_self_similar_multi}
The stochastic process $\{\mathcal{G}(\cdot , t) | t \in [0,\infty)\}$ is operator self-similar with scaling family $\{a^{H}|a>0\}$, where $a^{H}$ with $a>0$ is given by $a^{H}f=\{a^{\frac{3}{2}-d(s)}f(s)|s\in \Sg \}$ for $f \in L_{2}(\mu,\C)$.
\end{lemma}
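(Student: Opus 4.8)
The plan is to use that $\mathcal{G}$ is a centered Gaussian process with values in $L_{2}(\mu,\C)$ (Lemma~\ref{Proposition 8}), so that its finite-dimensional distributions are completely determined by the cross-covariance kernels $V$. Since the prescribed scaling operator $a^{H}$ is the bounded multiplication operator $f\mapsto\{a^{3/2-d(s)}f(s)\mid s\in\Sg\}$, the family $\{a^{H}\mathcal{G}(\cdot,t)\mid t\geq0\}$ is again centered Gaussian. Hence, to establish
\[
\{\mathcal{G}(\cdot,at)\mid t\geq0\}\overset{f.d.d.}{=}\{a^{H}\mathcal{G}(\cdot,t)\mid t\geq0\},
\]
it suffices to check that the two sides share the same cross-covariance kernel for every pair of times $t,u\geq0$ and every $r,s\in\Sg$.

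Concretely, I would first read off from the definition of $\mathcal{G}$ that the time-scaled process has kernel $E(\mathcal{G}(r,at)\overline{\mathcal{G}(s,au)})=V((r,at),(s,au))$. Next, using that $a^{H}$ acts by multiplication and that $\overline{a^{3/2-d(s)}}=a^{3/2-\overline{d(s)}}$, I would compute
\[
E\big((a^{H}\mathcal{G})(r,t)\,\overline{(a^{H}\mathcal{G})(s,u)}\big)
=a^{3/2-d(r)}\,a^{3/2-\overline{d(s)}}\,V((r,t),(s,u))
=a^{3-d(r,s)}\,V((r,t),(s,u)),
\]
where the last step uses $d(r,s)=d(r)+\overline{d(s)}$. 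It therefore remains to verify the homogeneity identity
\[
V((r,at),(s,au))=a^{3-d(r,s)}\,V((r,t),(s,u)).
\]
Reading off the explicit form (\ref{equality_function_V}), every summand in the bracket carries a power $(\cdot)^{3-d(r,s)}$ of a time variable; since $a>0$, we have $(at)^{3-d(r,s)}=a^{3-d(r,s)}t^{3-d(r,s)}$, the same for $u$, and $|at-au|^{3-d(r,s)}=a^{3-d(r,s)}|t-u|^{3-d(r,s)}$. The only term deserving attention is the factor $C(r,s;at-au)$: because $a>0$ preserves the sign of $t-u$, we have $C(r,s;a(t-u))=C(r,s;t-u)$, so it is left unchanged. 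Pulling the common factor $a^{3-d(r,s)}$ out of the bracket (and observing the prefactor $\sigma(r,s)/[(3-d(r,s))(2-d(r,s))]$ is time-independent) yields the identity, and the lemma follows.

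I expect no genuine obstacle here, as the argument is a homogeneity check; the two points needing care are minor. First, the invariance $C(r,s;a(t-u))=C(r,s;t-u)$ under positive dilations is precisely what makes the stationary-increment-type term scale with the correct exponent, so this is the one place where $a>0$ is essential. Second, one should justify that equality of the pointwise kernels $V$ really forces equality of the finite-dimensional laws of the $L_{2}(\mu,\C)$-valued process: for times $t_{1},\dots,t_{q}$ the vector $(\mathcal{G}(\cdot,t_{1}),\dots,\mathcal{G}(\cdot,t_{q}))$ is a centered Gaussian element of $L_{2}(\mu,\C)^{q}$ whose law is fixed by the covariance operators between coordinates, and these are exactly the integral operators with kernels $V((\cdot,t_{i}),(\cdot,t_{j}))$; matching all such kernels—which the scaling identity does—matches the laws, giving the equality of finite-dimensional distributions required by the definition of operator self-similarity.
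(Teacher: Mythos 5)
Your proposal is correct and follows essentially the same route as the paper: reduce the equality of finite-dimensional distributions of the centered Gaussian processes to equality of the cross-covariance kernels, and then verify the homogeneity identity $V((r,at),(s,au))=a^{3-d(r,s)}V((r,t),(s,u))$ using that $a^{H}$ acts by multiplication with $a^{3/2-d(\cdot)}$. Your explicit check of the term $C(r,s;a(t-u))=C(r,s;t-u)$ for $a>0$ is in fact slightly more careful than the paper, which asserts the corresponding covariance identity without spelling it out.
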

	\begin{proof}
	We have to prove
		\begin{align*}
		\langle E[\langle \mathcal{G}(\cdot,at),f \rangle \overline{\mathcal{G}(\cdot,au)}],g \rangle
		=\langle E[\langle a^{H} \mathcal{G}(\cdot,t),f \rangle \overline{a^{H} \mathcal{G}(\cdot,u)}],g \rangle
		\hspace*{0.2cm} \text{ for each } f,g \in L_{2}(\mu,\C).
		\end{align*}
	Since
		\begin{align*}
			E(\mathcal{G}(r,at)\overline{\mathcal{G}(s,au)})
		=	E(a^{\frac{3}{2}-d(r)}\mathcal{G}(r,t)a^{\frac{3}{2}-\overline{d(s)}}\overline{\mathcal{G}(s,u)}),
		\end{align*}
	we obtain
		\begin{align*}
			&\langle E[ \langle a^H \mathcal{G}(\cdot,t),f \rangle \overline{a^H\mathcal{G}(\cdot,u)}],g\rangle \\
		=	& \int_{\Sg} E[ \int_{\Sg} a^{\frac{3}{2}-d(r)}\mathcal{G}(r,t)\overline{f(r)}\mu(dr) 
			a^{\frac{3}{2}-\overline{d(s)}}\overline{\mathcal{G}(s,u)}]\overline{g(s)}\mu(ds) \\
		=	& \int_{\Sg} \int_{\Sg} E[ a^{\frac{3}{2}-d(r)}\mathcal{G}(r,t) 
			a^{\frac{3}{2}-\overline{d(s)}}\overline{\mathcal{G}(s,u)}]\overline{f(r)}\mu(dr)\overline{g(s)}\mu(ds)\\
		=	& \langle E[ \langle \mathcal{G}( \cdot ,at),f \rangle \overline{\mathcal{G}(s,au)}], g \rangle
		\end{align*}
	for all $f,g \in L_{2}(\mu,\C)$.
	\end{proof}
We are now able to define the limit process, which is a restriction to the unit interval of the process $\{ \mathcal{G}(s,t) | (s,t) \in \Sg \times \R_{\geq 0} \}$ given by
	\begin{align*}
	G=\{G(s,t) | (s,t) \in \Sg \times [0,1]\}.
	\end{align*}
The following lemma establishes sufficient conditions for the existence of a continuous version of this process.
\begin{lemma} \label{Proposition 10}
If the integrals
	\begin{align*}
	\int_{\Sg} \frac{\sigma^2(v)}{(1-h(v))^{2}}\mu(dv)
	\hspace{0.2cm} \text{ and } \hspace{0.2cm} 
	\int_{\Sg}\frac{\sigma^2(v)}{(1-h(v))(2h(v)-1)}\mu(dv)
	\end{align*}
are finite, there exists a continuous version of the $L_{2}(\mu,\C)$-valued stochastic process\\
$G=\{G(\cdot,t) | t \in [0,1]\}$.
\end{lemma}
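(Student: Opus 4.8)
The plan is to verify the hypotheses of the Kolmogorov--Chentsov continuity theorem for $L_{2}(\mu,\C)$-valued processes. Since Lemma~\ref{Proposition 8} guarantees that each $G(\cdot,t)$ is a Gaussian random element of $L_{2}(\mu,\C)$, the increment $G(\cdot,t)-G(\cdot,u)$ is again a centred Gaussian element of $L_{2}(\mu,\C)$, so by Fernique's theorem all of its moments are finite and mutually equivalent (see \citet{ledoux1991probability}). It therefore suffices to control the second moment $E\|G(\cdot,t)-G(\cdot,u)\|_{2}^{2}$ and then bootstrap to a moment of order $p$ large enough to beat the exponent $1$ demanded by the continuity criterion.

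First I would compute the increment variance fibrewise. Writing the covariance function (\ref{equality_function_V}) with $r=s$, where $d(s,s)=2h(s)$ and $c(s,s)=c(s)$, a direct expansion of
\[
E|G(s,t)-G(s,u)|^{2}=V((s,t),(s,t))+V((s,u),(s,u))-V((s,t),(s,u))-V((s,u),(s,t))
\]
makes the pure $t^{3-2h(s)}$ and $u^{3-2h(s)}$ contributions cancel, while the two relevant values of $C(s,s;\cdot)$ combine. This leaves the clean expression
\[
E|G(s,t)-G(s,u)|^{2}=\frac{\sigma^{2}(s)\,(c(s)+\overline{c(s)})}{(3-2h(s))(2-2h(s))}\,|t-u|^{3-2h(s)}.
\]

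Next I would bound the fibrewise prefactor and integrate. Using $3-2h(s)>1$ together with the estimates (\ref{inequality_realpart_c(s)}) and (\ref{inequality_c(s)_in_dependence_of_h}) for $\tfrac12(c(s)+\overline{c(s)})$ bounds the prefactor pointwise by $\tfrac{\sigma^{2}(s)}{(1-h(s))^{2}}+\tfrac{\sigma^{2}(s)}{(1-h(s))(2h(s)-1)}$, which are exactly the two integrands assumed to be finite. Since $t,u\in[0,1]$ we have $|t-u|\le 1$ and $3-2h(s)\ge 3-2\bar h$ with $\bar h=\esssup_{s\in\Sg}h(s)\le 1<\tfrac32$, so $|t-u|^{3-2h(s)}\le|t-u|^{3-2\bar h}$. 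Integrating over $\Sg$ then yields $E\|G(\cdot,t)-G(\cdot,u)\|_{2}^{2}\le C\,|t-u|^{3-2\bar h}$ with $C<\infty$.

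Finally, by the equivalence of Gaussian moments, $E\|G(\cdot,t)-G(\cdot,u)\|_{2}^{p}\le C_{p}\,|t-u|^{(3-2\bar h)p/2}$ for every $p>0$. Choosing $p>\tfrac{2}{3-2\bar h}$ makes the exponent strictly exceed $1$, and the Kolmogorov continuity theorem for Banach-space-valued processes produces a continuous modification of $\{G(\cdot,t)\mid t\in[0,1]\}$ in $C([0,1],L_{2}(\mu,\C))$. I expect the main obstacle to be the covariance bookkeeping in the first step: one must carefully track which branch of $C(r,s;\cdot)$ applies to each of the four terms so that the cancellations collapse everything to the single $|t-u|^{3-2h(s)}$ term. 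Once that clean increment formula is secured, the reduction to the already-assumed integrability conditions and the appeal to the Gaussian Kolmogorov criterion are routine.
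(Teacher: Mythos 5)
Your proposal is correct and follows essentially the same route as the paper: both compute $E\|G(\cdot,t)-G(\cdot,u)\|_{2}^{2}=\int_{\Sg}\frac{\sigma^{2}(v)(c(v)+\overline{c(v)})}{(3-2h(v))(2-2h(v))}|t-u|^{3-2h(v)}\mu(dv)$ from the covariance $V$, bound it by the two assumed integrals times a power of $|t-u|$ (the paper uses $|t-u|^{3-2h(v)}\le|t-u|$, you use $|t-u|^{3-2\bar h}$), upgrade to a higher moment via the equivalence of Gaussian moments in Hilbert space (the paper fixes $p=4$ with the constant $K_{4,2}$), and invoke Kolmogorov's continuity criterion. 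The differences are cosmetic, and your fibrewise cancellation of the $t^{3-2h(s)}$ and $u^{3-2h(s)}$ terms via the two branches of $C(s,s;\cdot)$ checks out.
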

	\begin{proof}
	We use Kolmogorov`s continuity theorem (see \citet{kallenberg2014foundations}) and an inequality for the moments of a Hilbert space-valued Gaussian random element (see \citet{ledoux1991probability}).
		\begin{align*}
						&E\| G(\cdot,t)-G(\cdot,u) \|^4
		\leq 		K_{4,2}^{4} (E\| G(\cdot,t)-G(\cdot,u) \|^2)^2	\\
		=		& K_{4,2}^{4} \left( \int_{\Sg} \frac{\sigma^2(v)}{(1-h(v))(3-2h(v))} \frac{1}{2}(c(v)+\overline{c(v)})
				|t-u|^{3-2h(v)} \mu(dv) \right) ^2\\
		\leq		& K_{4,2}^{4} \left( \int_{\Sg} \frac{\sigma^2(v)}{(1-h(v))^{2}}\mu(dv)+
				\int_{\Sg}\frac{\sigma^2(v)}{(1-h(v))(2h(v)-1)}\mu(dv) \right) ^2 |t-u|^2.
		\end{align*}
 We used the inequalities (\ref{inequality_realpart_c(s)}) and (\ref{inequality_c(s)_in_dependence_of_h}).
	\end{proof}

The proof is divided into the convergence of the finite-dimensional distributions and the tightness.

\subsubsection{Convergence of the finite-dimensional distributions}
We consider $\zeta_{n}$ and $G$ as random elements in $C([0,1], L_{2}(\mu,\C))$, i.e.
	\begin{align*}
	\zeta_{n},G:\Omega \rightarrow C([0,1], L_{2}(\mu,\C)).
	\end{align*}
We get as a necessary condition for the convergence of the finite-dimensional distributions
	\begin{align*}
	(n^{-H}\zeta_{n}(t_{1}),...,n^{-H}\zeta_{n}(t_{q}))
	\overset{D}{\rightarrow}
	(G(t_{1}),...,G(t_{q}))
	\end{align*}
in $L_{2}^{q}(\mu,\C)$ for each $q \in \N$ and all $t_{1},...,t_{q} \in [0,1]$. \\
The space $L_{2}^{q}(\mu,\C)$ is isomorph to the space of all $\mu$-integrable functions $f: \Sg \rightarrow \C^{q}$ denoted by $L_{2}(\mu,\C^{q})$ and endowed with the norm
	\begin{align*}
	\|f\|=\left( \int_{\Sg} \|f(s)\|^2_{\C^{q}} \mu(ds) \right) ^{\frac{1}{2}}
	\hspace{0.2cm} \text{ for } \hspace{0.2cm}	
	f \in L_{2}(\mu,\C^{q}),
	\end{align*}
where $\| \cdot \|_{\C^{q}}$ is the euclidean norm in $\C^{q}$. We define
	\begin{align*}
	\zeta_{n}^{(q)}(s)
	=(\zeta_{n}(s,t_{1}),...,\zeta_{n}(s,t_{q}))^{T}
	\hspace*{0.2cm} \text{ and } \hspace*{0.2cm}
	G^{(q)}(s)=(G(s,t_{1}),...,G(s,t_{q}))^{T}
	\end{align*}
for $s \in \Sg$ and fixed $t_{1},...,t_{q} \in [0,1]$ and the processes
$\zeta_{n}^{(q)}=\{\zeta_{n}^{(q)}(s) |s \in \Sg\}$ and $G^{(q)}=\{G^{(q)}(s) |s \in \Sg\}$. Using the isomorphy of $L_{2}^{q}(\mu,\C)$ and $L_{2}(\mu,\C^{q})$ we are able to prove equivalently 
	\begin{align} \label{equality_convergence_finite_dim_functional_central}
	n^{-H} \zeta_{n}^{(q)} \overset{D}{\rightarrow} G^{(q)}
	\end{align}
in $L_{2}(\mu,\C^{q})$.
We use theorem \ref{theorem_weak convergence_Cremers_Kadelka}. To prove part (I) we consider the sequence of random vectors
	\begin{align*}
		(n^{-H}\zeta_{n}^{(q)}(s_{1}),...,n^{-H}\zeta_{n}^{(q)}(s_{p}))
	=	\sum_{j=-\infty}^{\infty} A_{nj} \varepsilon_{j}
	\end{align*}
for $s_{1},...,s_{p} \in \Sg$ with $A_{nj}=(n^{-(\frac{3}{2}-d(s_{a}))}a_{nj}(s_{a},t_{b}))_{a=1...p, b=1...q}$, $a_{nj}(s,t)$ like in (\ref{anj}) and $\varepsilon_{j}=\operatorname{diag}(\varepsilon_{j}(s_{1}),...,\varepsilon_{j}(s_{p}))$.
\begin{lemma} \label{Lemma_komplex_Multi_Bedingungen_anAnj}
If $h(s) \Intervall$, the sequence of operators $A_{nj}$ fulfills the conditions (\ref{Bedingungen_Operator}).
\end{lemma}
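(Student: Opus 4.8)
The plan is to mirror the corresponding lemma in the proof of the central limit theorem, now accounting for the extra index $b$ coming from the times $t_{1},\dots,t_{q}$ and for the piecewise-linear correction term in $a_{nj}(s,t)$ from (\ref{anj}). Since $A_{nj}=(n^{-(\frac{3}{2}-d(s_{a}))}a_{nj}(s_{a},t_{b}))_{a,b}$ is a fixed $p\times q$ matrix, all matrix norms are equivalent; in particular one has the two bounds $\|A_{nj}\|_{op}\leq \sqrt{pq}\,\max_{a,b}|n^{-(\frac{3}{2}-d(s_{a}))}a_{nj}(s_{a},t_{b})|$ and $\|A_{nj}\|_{op}\leq\big(\sum_{a,b}|n^{-(\frac{3}{2}-d(s_{a}))}a_{nj}(s_{a},t_{b})|^{2}\big)^{1/2}$, and I would use the first for the first condition in (\ref{Bedingungen_Operator}) and the second for the second. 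Note also that $|n^{-(\frac{3}{2}-d(s_{a}))}|=n^{h(s_{a})-\frac{3}{2}}$, the imaginary part of $d(s_{a})$ contributing only a unimodular phase.

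For the first condition I would estimate $\sup_{j}|a_{nj}(s,t)|$. Writing $|v_{m}(s)|=(m+1)^{-h(s)}$ for $m\geq0$ and $0$ otherwise, the nonzero terms of $\sum_{k=1}^{\lfloor nt\rfloor}v_{k-j}(s)$ form a window of at most $\lfloor nt\rfloor$ values of the decreasing sequence $(m+1)^{-h(s)}$, so the sum is largest for the window starting at $m=0$; together with the correction term (bounded by $1$) this gives
\begin{align*}
\sup_{j\in\Z}|a_{nj}(s,t)|\leq 1+\sum_{l=1}^{\lfloor nt\rfloor}l^{-h(s)}\leq 2+\frac{(nt)^{1-h(s)}}{1-h(s)}.
\end{align*}
Multiplying by $n^{h(s)-\frac{3}{2}}$ produces a bound of order $n^{-1/2}$, uniformly over the finitely many pairs $(s_{a},t_{b})$, whence $\lim_{n\to\infty}\sup_{j}\|A_{nj}\|_{op}=0$.

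For the second condition I would use that the $\varepsilon_{j}$ are independent with $E|\varepsilon_{0}(s)|^{2}=\sigma^{2}(s)$, which yields the variance identity $E|\zeta_{n}(s,t)|^{2}=\sigma^{2}(s)\sum_{j}|a_{nj}(s,t)|^{2}$. Consequently
\begin{align*}
\sum_{j\in\Z}\|A_{nj}\|_{op}^{2}
\leq \sum_{a=1}^{p}\sum_{b=1}^{q} n^{2h(s_{a})-3}\sum_{j\in\Z}|a_{nj}(s_{a},t_{b})|^{2}
= \sum_{a=1}^{p}\sum_{b=1}^{q} \frac{n^{2h(s_{a})-3}}{\sigma^{2}(s_{a})}\,E|\zeta_{n}(s_{a},t_{b})|^{2}.
\end{align*}
By the covariance limit (\ref{equality_Convergence_behaviour_piecewise_linear_process}) taken with $r=s=s_{a}$ and $t=u=t_{b}$ (so that $d(r,s)=2h(s_{a})$ and the $|t-u|$-term vanishes), each summand converges to $V((s_{a},t_{b}),(s_{a},t_{b}))/\sigma^{2}(s_{a})$, which is finite since $h(s)\in(\frac{1}{2},1)$ keeps both $3-2h(s_{a})$ and $2-2h(s_{a})$ positive. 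Being a finite sum of convergent sequences, $\limsup_{n\to\infty}\sum_{j}\|A_{nj}\|_{op}^{2}<\infty$.

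The computations are routine given the earlier results; the only point requiring care is the window estimate for $\sup_{j}|a_{nj}(s,t)|$, where the piecewise-linear correction and the dependence on $t$ must be controlled uniformly in $j$ and over the finitely many sampled arguments. Since that correction term is bounded by $1$ and the leading behaviour is governed by the partial-sum part exactly as in the central limit case, this presents no genuine obstacle. The substantive input is the already-established limit (\ref{equality_Convergence_behaviour_piecewise_linear_process}), which is precisely what makes the second-moment sum bounded.
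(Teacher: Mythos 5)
Your proposal is correct and follows essentially the same route as the paper: the first condition via the bound $\sup_{j}|a_{nj}(s,t)|\lesssim (nt)^{1-h(s)}/(1-h(s))$ multiplied by $n^{h(s)-\frac{3}{2}}$, and the second via the variance identity $E|\zeta_{n}(s,t)|^{2}=\sigma^{2}(s)\sum_{j}|a_{nj}(s,t)|^{2}$ combined with the covariance limit (\ref{equality_Convergence_behaviour_piecewise_linear_process}) at $r=s$, $t=u$. The only differences are cosmetic (your explicit norm-equivalence bounds for the finite matrix versus the paper's chosen operator-norm formula, and your slightly more detailed window estimate), which do not change the argument.
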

	\begin{proof}
	The operator norm may be calculated as $ \|A_{nj}\|_{op}
		=\max_{1\leq i \leq q} \sum_{l=1}^{p}| n^{d(s_{l})-\frac{3}{2}}a_{nj}(s_{l},t_{i})|$.
	The first condition holds, since
		\begin{align*}
		 	\sup_{j \in \Z} | n^{d(s)-\frac{3}{2}}a_{nj}(s,t)|	
		\leq 	n^{h(s)-\frac{3}{2}} \sup_{j \in \Z} \left\lbrace \sum_{k=1}^{\lfloor nt \rfloor }|v_{k-j}(s)|+\{nt\}|v_{\lfloor nt \rfloor+1-j}(s) | \right\rbrace 
		\sim \frac{t^{1-h(s)}}{1-h(s)}n^{-\frac{1}{2}}.
		\end{align*}
	The second one may be proved by using the Jensen inequality and the relationship between the variance of the piecewise linear process and $a_{nj}(s,t)$ given by
		\begin{align*}
		  E|\zeta_{n}(s,t)|^2
		=\sigma^2(s)\sum_{j=-\infty}^{\lfloor nt \rfloor +1} |a_{nj}(s,t)|^2,
		\end{align*}
	then
		\begin{align*}
		\sum_{j\in \Z}| n^{d(s)-\frac{3}{2}}a_{nj}(s,t)|^2
		=n^{2h(s)-3}\sum_{j\in \Z}|a_{nj}(s,t)|^2
		=n^{2h(s)-3}\frac{1}{\sigma^2(s)}E|\zeta_{n}(s,t)|^2
		\end{align*}
	and with the help of (\ref{equality_Convergence_behaviour_piecewise_linear_process}) and the inequalities (\ref{inequality_realpart_c(s)}) and (\ref{inequality_c(s)_in_dependence_of_h})
		\begin{align*}
		\lim_{n \to \infty} \frac{1}{\sigma^2(s)}n^{2h(s)-3}E|\zeta_{n}(s,t)|^2
		=&\frac{\frac{1}{2}(c(s)+\overline{c(s)})}{(3-2h(s))(1-h(s))}t^{3-2h(s)}	\\
		\leq&  \max\{t,t^2\} \left( \frac{1}{(1-h(s))^2}+\frac{1}{(1-h(s))(2h(s)-1)} \right).
		\end{align*}
	\end{proof}
We construct the process
	\begin{align*}
	(n^{-H}\tilde{\zeta}_{n}^{(q)}(s_{1}),...,n^{-H}\tilde{\zeta}_{n}^{(q)}(s_{p}))
	=\sum_{j=-\infty}^{\infty} A_{nj} \tilde{\varepsilon}_{j}.
	\end{align*}
Since $\{\tilde{\varepsilon}_{j}\}$ are zero-mean Gaussian stochastic processes with the same covariance operator as $\{\varepsilon_{j}\}$ and
	\begin{align*}
	\lim_{n\to \infty}
	n^{-(\frac{3}{2}-d(r_{i}))}n^{-(\frac{3}{2}-d(s_{i}))}
	E(\tilde{\zeta}_{n}(r_{i},t_{j})\tilde{\zeta}_{n}(s_{i},u_{j}))
	=E(G(r_{i},t_{j})G(s_{i},u_{j})),
	\end{align*}
concerning to (\ref{equality_Convergence_behaviour_piecewise_linear_process}), the process converges in distribution to $G$.
Calculating
	\begin{align*}
	E\|n^{-H}\zeta_{n}^{(q)}(s)\|^2_{\C^{q}}
	=\sum_{i=1}^{q}E[|n^{-(\frac{3}{2}-d(s))}\zeta_{n}(s,t_{i})|^2]
	\hspace*{0.2cm}
\text{and}
\hspace*{0.2cm}
	E\|G^{q}(s)\|^2
	=\sum_{i=1}^{q}E[|G(s,t_{i})|^2],
	\end{align*}
the second point in theorem \ref{theorem_weak convergence_Cremers_Kadelka} follows by setting $r=s$ and $t=u$ in (\ref{equality_Convergence_behaviour_piecewise_linear_process}). We prove point (III) in the next lemma.
 
\begin{lemma} \label{Lemma_integrierbare_Funktion_Invarianzpr_Multiplop}
There exists a $\mu$-integrable function $g$ such that $|E|n^{-(\frac{3}{2}-d(s))}\zeta_{n}(s,t)|^{2}| \leq g(s)$ for each $s$ in $\Sg$ and all $n \in \N$, if the integrals
	\begin{align*}
	\int_{\Sg} \frac{\sigma^2(s)}{(1-h(s))^2}\mu(ds)
	\hspace*{0.2cm} \text{ and } \hspace*{0.2cm}
	\int_{\Sg} \frac{\sigma^2(s)}{(1-h(s))(2h(s)-1)}\mu(ds)
	\end{align*}	
are finite.
\end{lemma}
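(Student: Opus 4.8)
The plan is to follow closely the proof of Lemma~\ref{lemma_dominated convergence_integrable function_central_limt}, which treated the partial sums $S_n$, and to control the two extra contributions produced by the fractional increment $\{nt\}X_{\lfloor nt\rfloor+1}$. Since $|n^{-(\frac{3}{2}-d(s))}|^2=n^{2h(s)-3}$, it suffices to bound $n^{2h(s)-3}E|\zeta_n(s,t)|^2$ uniformly in $n$ by a $\mu$-integrable function of $s$. Expanding $\zeta_n(s,t)=S_{\lfloor nt\rfloor}(s)+\{nt\}X_{\lfloor nt\rfloor+1}(s)$ and taking expectations gives
\begin{align*}
E|\zeta_n(s,t)|^2 = E|S_{\lfloor nt\rfloor}(s)|^2 + 2\{nt\}\operatorname{Re}E\big(S_{\lfloor nt\rfloor}(s)\overline{X_{\lfloor nt\rfloor+1}(s)}\big) + \{nt\}^2\gamma_0(s),
\end{align*}
where the two cross terms are complex conjugate and $E|X_{\lfloor nt\rfloor+1}(s)|^2=\gamma_0(s)$ by stationarity. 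I would bound the three contributions separately, writing $g_0(s)=2\big(\tfrac{\sigma^2(s)}{(1-h(s))^2}+\tfrac{\sigma^2(s)}{(1-h(s))(2h(s)-1)}\big)$ for the dominating function of Lemma~\ref{lemma_dominated convergence_integrable function_central_limt}.

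For the partial-sum term I would apply Lemma~\ref{lemma_dominated convergence_integrable function_central_limt} at index $\lfloor nt\rfloor$: since $\lfloor nt\rfloor\leq n$ and $2h(s)-3<0$, one has $n^{2h(s)-3}\leq\lfloor nt\rfloor^{2h(s)-3}$, so $n^{2h(s)-3}E|S_{\lfloor nt\rfloor}(s)|^2\leq g_0(s)$ (the term vanishing when $\lfloor nt\rfloor=0$, where $S_0=0$). For the diagonal term I would use $n^{2h(s)-3}\leq1$ and $\{nt\}<1$ together with the estimate $\gamma_0(s)\leq\sigma^2(s)\big(1+\tfrac{1}{2h(s)-1}\big)$ already recorded in the proof of Lemma~\ref{lemma_dominated convergence_integrable function_central_limt}; since $(1-h(s))^2<1$, this is again dominated by $g_0(s)$.

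The cross term is the main point. By stationarity $E\big(S_{\lfloor nt\rfloor}(s)\overline{X_{\lfloor nt\rfloor+1}(s)}\big)=\sum_{l=1}^{\lfloor nt\rfloor}\gamma_l(s)$, and using the pointwise estimate $|\gamma_l(s)|\leq\sigma^2(s)c_h(s)\,l^{1-2h(s)}$ already exploited in Lemma~\ref{lemma_dominated convergence_integrable function_central_limt}, together with $\{nt\}<1$ and $1-2h(s)\in(-1,0)$, I obtain
\begin{align*}
n^{2h(s)-3}\cdot 2\{nt\}\Big|\sum_{l=1}^{\lfloor nt\rfloor}\gamma_l(s)\Big|
&\leq 2\sigma^2(s)c_h(s)\,n^{2h(s)-3}\frac{\lfloor nt\rfloor^{2-2h(s)}}{2-2h(s)}\\
&\leq \frac{\sigma^2(s)c_h(s)}{1-h(s)},
\end{align*}
where the last step uses $\lfloor nt\rfloor^{2-2h(s)}\leq n^{2-2h(s)}$ and $n^{-1}\leq1$. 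Invoking (\ref{inequality_c(s)_in_dependence_of_h}) to replace $c_h(s)$ by $\tfrac{1}{1-h(s)}+\tfrac{1}{2h(s)-1}$ turns this into exactly $\tfrac12 g_0(s)$, a fixed multiple of the two integrands in the hypothesis.

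Adding the three estimates yields $n^{2h(s)-3}E|\zeta_n(s,t)|^2\leq g(s)$ with $g$ a fixed multiple of $\tfrac{\sigma^2(s)}{(1-h(s))^2}+\tfrac{\sigma^2(s)}{(1-h(s))(2h(s)-1)}$, which is $\mu$-integrable by assumption, independent of $n$, and (since $t\in[0,1]$ forces $\lfloor nt\rfloor\leq n$ and every power of $t$ to be at most one) may be taken independent of $t$ as well. The main obstacle is precisely the cross term: one must play the uniform-in-$n$ decay of $n^{2h(s)-3}$ against the growth $\lfloor nt\rfloor^{2-2h(s)}$ of the truncated autocovariance sum while tracking the factor $c_h(s)$ carefully, so that the resulting dominating function matches the two integrands assumed finite; the partial-sum and diagonal terms are then routine consequences of Lemma~\ref{lemma_dominated convergence_integrable function_central_limt}.
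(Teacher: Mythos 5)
Your proposal is correct and follows essentially the same route as the paper: the paper's own proof bounds $|E|\zeta_n(s,t)|^2|$ by $2\sum_{k=1}^{\lfloor nt\rfloor}(\lfloor nt\rfloor-k)|\gamma_k(s)|+2\{nt\}\sum_{k=1}^{\lfloor nt\rfloor}|\gamma_k(s)|+(\lfloor nt\rfloor+\{nt\}^2)|\gamma_0(s)|$, which is exactly your three contributions (partial sum, cross term, diagonal) written out in autocovariances, and it uses the same estimates $|\gamma_k(s)|\leq\sigma^2(s)c_h(s)k^{1-2h(s)}$ and (\ref{inequality_c(s)_in_dependence_of_h}) with the same power counting in $n$. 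The only cosmetic difference is that you invoke Lemma~\ref{lemma_dominated convergence_integrable function_central_limt} as a black box for the $E|S_{\lfloor nt\rfloor}(s)|^2$ piece, where the paper re-expands it.
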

	\begin{proof}
	Using the triangular inequality
		\begin{align*}
			|E|\zeta_{n}(s,t)|^{2}|
		\leq& 2 \sum_{k=1}^{\lfloor nt \rfloor} (\lfloor nt \rfloor -k) |\gamma_{k}(s)|+
			2\{nt\}\sum_{k=1}^{\lfloor nt \rfloor} |\gamma_{k} (s)|+
			(\lfloor nt \rfloor +\{nt\}^2)|\gamma_{0}(s)|.
		\end{align*}
Each particular summand may be restricted as follows
		\begin{align*}
			\sum_{k=1}^{\lfloor nt \rfloor}(\lfloor nt \rfloor-k) |\gamma_{k} (s)|
		\leq& \lfloor nt \rfloor^{3-2h(s)} \frac{1}{2} \sigma^2(s) \left( \frac{1}{(1-h(s))^2}+\frac{1}{(1-h(s))(2h(s)-1)} \right),\\
		\sum_{k=1}^{\lfloor nt \rfloor} |\gamma_{k} (s)|
		\leq &\lfloor nt \rfloor^{2-2h(s)} \sigma^2(s)\frac{1}{2}\left( \frac{1}{(1-h(s))^2}+\frac{1}{(1-h(s))(2h(s)-1)} \right) 
	\intertext{and}
			|\gamma_{0}(s)|
		\leq &	\sigma^2(s)\sum_{j=0}^{\infty}(j+1)^{-2h(s)}	\\
		\leq & \sigma^2(s)\left( 1 + \int_{1}^{\infty} x^{-2h(s)}dx\right)	
		=	\sigma^2(s)\left( 1 + \frac{1}{2h(s)-1}\right) .
		\end{align*}
	\end{proof}
Altogether $E\|n^{-H}\zeta_{n}^{q}(s)\|^2_{\C^{q}} \leq q*g(s)=:f(s)$,
where $f$ is integrable.

\subsubsection{Tightness}
To investigate tightness of the process $\{(n^{-H}\zeta_{n}(t))_{ n \in \N} | t \in [0,1] \}$ we use proposition \ref{proposition_tightness}.
The first condition follows since $(n^{-H}S_{n})_{n \in \N }$ converges in distribution in $L_{2}(\mu,\C)$ referring to theorem \ref{theorem_complex_multiplication_operator_central_limit}. Then $(n^{-H}\zeta_{n}(t))_{ n \in \N}$ converges in distribution in $L_{2}(\mu,\C)$ for each $t\in [0,1]$
\vspace*{0.2cm}	\\
The next lemma proves the second condition in proposition \ref{proposition_tightness}.
	\begin{lemma} \label{lemma_tightness_complex}
	If $h(s) \Intervall$ and the integrals
		\begin{equation*}
		E\left[ \int_{\mathbb{S}}\frac{|\varepsilon_{0}(v)|^{2}}{[1-h(v)]^2}\mu(dv)\right]^{\frac{p}{2}}
		\text{ , for $p\geq 2$ and } \hspace*{0.2cm}
		\int_{\mathbb{S}}\frac{\sigma^{2}(v)}{[1-h(v)][2h(v)-1]}\mu(dv)
		\end{equation*}
	are finite, then there exists a constant $C$, such that
		\begin{align*}
		E\| n^{-H}[\zeta_{n}(t)-\zeta_{n}(u)]\|^p \leq C|t-u|^{\frac{(3-2\bar{h})p}{2}}
		\hspace*{0.2cm} \text{ , for } \hspace*{0.2cm}
		n\geq 1
		\end{align*}
	with $\bar{h}=\esssup_{s \in \Sg}h(s)$.
	\end{lemma}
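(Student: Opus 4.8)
The plan is to reduce the claim to a moment estimate for a sum of independent, centred $L_2(\mu,\C)$-valued summands, and then to treat $p=2$ by a variance computation and $p>2$ by a Rosenthal-type inequality. Assume without loss of generality $u<t$. Since $n^{-H}$ acts as multiplication by $n^{d(s)-\frac{3}{2}}$ and $\zeta_n(s,t)=\sum_j a_{nj}(s,t)\varepsilon_j(s)$ with $a_{nj}(s,t)$ as in (\ref{anj}), the increment is, as a random element of $L_2(\mu,\C)$,
\begin{align*}
n^{-H}[\zeta_n(t)-\zeta_n(u)]=\sum_{j=-\infty}^{\lfloor nt\rfloor+1}\eta_{nj},\qquad \eta_{nj}(s):=n^{d(s)-\frac{3}{2}}b_{nj}(s)\varepsilon_j(s),\quad b_{nj}(s):=a_{nj}(s,t)-a_{nj}(s,u).
\end{align*}
The $\eta_{nj}$ are independent in $j$, centred and square integrable, so $E\|\eta_{nj}\|^2=\int_\Sg n^{2h(s)-3}|b_{nj}(s)|^2\sigma^2(s)\,\mu(ds)$, and by orthogonality $\sigma^2(s)\sum_j|b_{nj}(s)|^2=E|\zeta_n(s,t)-\zeta_n(s,u)|^2$.

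For $p=2$ this already suffices, since $E\|n^{-H}[\zeta_n(t)-\zeta_n(u)]\|^2=\sum_j E\|\eta_{nj}\|^2=\int_\Sg n^{2h(s)-3}E|\zeta_n(s,t)-\zeta_n(s,u)|^2\,\mu(ds)$. I would control the integrand exactly as in Lemmas \ref{Lemma_complex_operator_limit_E(SnSn)}, \ref{Konvergenz_Kovarianzen_Invarianzpr_Multi_komplex} and \ref{Lemma_integrierbare_Funktion_Invarianzpr_Multiplop}: tracking the window length $m_n=\lfloor nt\rfloor-\lfloor nu\rfloor$ in the dominated-convergence argument and using (\ref{inequality_realpart_c(s)})--(\ref{inequality_c(s)_in_dependence_of_h}) yields a bound, uniform in $n$, of the form
\begin{align*}
n^{2h(s)-3}E|\zeta_n(s,t)-\zeta_n(s,u)|^2\le C\,\sigma^2(s)\Big(\tfrac{1}{(1-h(s))^2}+\tfrac{1}{(1-h(s))(2h(s)-1)}\Big)|t-u|^{3-2h(s)}.
\end{align*}
As $|t-u|\le 1$ and $3-2h(s)\ge 3-2\bar h$, one replaces $|t-u|^{3-2h(s)}$ by $|t-u|^{3-2\bar h}$; integrating and invoking the finiteness of the two hypothesised integrals gives $E\|n^{-H}[\zeta_n(t)-\zeta_n(u)]\|^2\le C|t-u|^{3-2\bar h}$, the assertion for $p=2$. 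This is where $\bar h<1$ enters: it makes the exponent $\frac{(3-2\bar h)p}{2}=3-2\bar h$ strictly larger than $1$, as required by the $\alpha>1$ hypothesis of Proposition \ref{proposition_tightness}.

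For $p>2$ I would apply the Rosenthal inequality for sums of independent centred Hilbert-space-valued random variables,
\begin{align*}
E\Big\|\sum_j\eta_{nj}\Big\|^p\le C_p\Big[\Big(\sum_j E\|\eta_{nj}\|^2\Big)^{p/2}+\sum_j E\|\eta_{nj}\|^p\Big].
\end{align*}
The first term is the previous estimate raised to the power $p/2$, hence $\le C|t-u|^{(3-2\bar h)p/2}$. For the second term I would factor out the very weight $(1-h(s))^{-2}$ occurring in the hypothesis. Writing $c_{nj}(s):=n^{2h(s)-3}|b_{nj}(s)|^2(1-h(s))^2$,
\begin{align*}
\|\eta_{nj}\|^2=\int_\Sg c_{nj}(s)\frac{|\varepsilon_j(s)|^2}{(1-h(s))^2}\,\mu(ds)\le\|c_{nj}\|_\infty\int_\Sg\frac{|\varepsilon_j(s)|^2}{(1-h(s))^2}\,\mu(ds),
\end{align*}
so that $E\|\eta_{nj}\|^p\le\|c_{nj}\|_\infty^{p/2}M_p$ with $M_p:=E\big[\int_\Sg\frac{|\varepsilon_0(s)|^2}{(1-h(s))^2}\mu(ds)\big]^{p/2}<\infty$ by assumption. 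It then remains to bound $\sum_j\|c_{nj}\|_\infty^{p/2}$, for which I would use the splitting $\sum_j\|c_{nj}\|_\infty^{p/2}\le(\sup_j\|c_{nj}\|_\infty)^{(p-2)/2}\sum_j\|c_{nj}\|_\infty$, together with $\sup_j\|c_{nj}\|_\infty\le C n^{-1}|t-u|^{2(1-\bar h)}$ from the uniform coefficient estimate of Lemma \ref{Lemma_komplex_Multi_Bedingungen_anAnj} and $\sum_j\|c_{nj}\|_\infty\le C|t-u|^{3-2\bar h}$ from the variance summation above.

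The hard part is precisely this last term. The naive splitting only produces the exponent $(1-\bar h)(p-2)+(3-2\bar h)=p+1-\bar h p$, which falls short of the target $\frac{(3-2\bar h)p}{2}$ by $\frac{p}{2}-1$; the deficit is exactly compensated by the factor $n^{-(p-2)/2}$ coming from $\sup_j\|c_{nj}\|_\infty$, since $(n|t-u|)^{-(p-2)/2}\le 1$ whenever $|t-u|\ge 1/n$. The argument must therefore be split according to whether $|t-u|\ge 1/n$ or $|t-u|<1/n$; in the latter regime $\zeta_n$ is affine on the relevant subinterval and the increment reduces to a single scaled term $n|t-u|\,X_{\lfloor\cdot\rfloor+1}$, estimated directly. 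Beyond this case split, the genuine technical obstacle is the control of $\sum_j\|c_{nj}\|_\infty$, i.e. a supremum over $s\in\Sg$ sitting inside a sum over $j$ rather than the more tractable $\sup_s\sum_j c_{nj}(s)$; resolving it requires the explicit window structure of $b_{nj}(s)$ from (\ref{anj}) together with the monotonicity estimates (\ref{inequality_realpart_c(s)})--(\ref{inequality_c(s)_in_dependence_of_h}), in the same spirit as the uniform domination in Lemma \ref{Lemma_integrierbare_Funktion_Invarianzpr_Multiplop}.
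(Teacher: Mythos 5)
Your overall strategy coincides with the one the paper intends: the paper gives no self-contained proof of this lemma, referring instead to \citet{Chara_Rack_Operator} and supplying only the coefficient estimate of Lemma \ref{Hilfslemma_komplex_Multi_Straffheit}, and the argument behind that reference is exactly your decomposition of $n^{-H}[\zeta_n(t)-\zeta_n(u)]$ into independent centred $L_2(\mu,\C)$-valued summands, orthogonality for $p=2$, and a Rosenthal-type inequality for $p>2$. Your $p=2$ case is essentially complete: the pointwise bound on $n^{2h(s)-3}E|\zeta_n(s,t)-\zeta_n(s,u)|^2$ is precisely what Lemma \ref{Hilfslemma_komplex_Multi_Straffheit} provides (up to the interpolation terms when $\{nt\},\{nu\}\neq0$, which the triangle inequality absorbs), the passage from $|t-u|^{3-2h(s)}$ to $|t-u|^{3-2\bar h}$ is legitimate, and your remark that $\bar h<1$ is what makes the exponent exceed $1$ is correct. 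The exponent bookkeeping in the $p>2$ case, including the necessary dichotomy $|t-u|\ge 1/n$ versus $|t-u|<1/n$ and the observation that $n^{-(p-2)/2}$ exactly repairs the deficit $\tfrac{p}{2}-1$, is also right.

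There is, however, a genuine gap in the $p>2$ case, and it sits exactly where you flagged it: the estimate $\sum_j\|c_{nj}\|_\infty\le C|t-u|^{3-2\bar h}$ is not a consequence of ``the variance summation above.'' The variance computation, i.e.\ Lemma \ref{Hilfslemma_komplex_Multi_Straffheit}, controls $\sup_{s}\sum_j c_{nj}(s)$ (pointwise in $s$, then a supremum), whereas Rosenthal's second term requires $\sum_j\sup_s c_{nj}(s)$, and the interchange of supremum and sum is not automatic. Worse, after multiplying the bound of Lemma \ref{Hilfslemma_komplex_Straffheit} by the weight $(1-h(s))^2$ you are left with a term $\tfrac{(1-h(s))^2}{2h(s)-1}$ whose supremum over $s$ need not be finite under the stated integral hypotheses (nothing prevents $\essinf_s h(s)=\tfrac12$), so the crude factorization $\|\eta_{nj}\|^2\le\|c_{nj}\|_\infty\int_\Sg|\varepsilon_j(s)|^2(1-h(s))^{-2}\mu(ds)$ may already lose too much. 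Closing the argument requires either a direct estimate of $\sum_j\sup_s c_{nj}(s)$ exploiting the explicit window structure of $b_{nj}(s)$ (treating $j\le\lfloor nu\rfloor$ and $\lfloor nu\rfloor<j\le\lfloor nt\rfloor+1$ separately, as in the proof of Lemma \ref{Hilfslemma_komplex_Multi_Straffheit}), or keeping the $s$-dependence inside the integral against $|\varepsilon_j(s)|^2(1-h(s))^{-2}$ instead of extracting an $L_\infty$ norm. Since this coefficient estimate is the entire analytic content of the lemma, the proposal, while strategically correct and honest about its own incompleteness, does not yet constitute a proof.
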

The following lemma is part of the proof of the previous one.
\begin{lemma} \label{Hilfslemma_komplex_Multi_Straffheit}
Let $0\leq u<t\leq 1$ and $\{nt\}=\{nu\}=0$. If $h(s) \Intervall$, then
	\begin{align*}
			n^{-(3-2h(s))}\sum_{j=-\infty}^{nt} \left| \sum_{k=nu+1}^{nt} v_{k-j}(s) \right|^2
	\leq	\left[ \frac{2}{(1-h(s))^2} +\frac{1}{2h(s)-1}\right] |t-u|^{3-2h(s)}.
	\end{align*}
\end{lemma}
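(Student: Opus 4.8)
The plan is to reduce the statement to a deterministic real-variable estimate and then to compare the resulting sums with elementary integrals, choosing the per-term bounds so that the stated constant appears without slack. Fix $s$ and abbreviate $h:=h(s)\Intervall$. Since $\{nt\}=\{nu\}=0$, the numbers $a:=nu$ and $b:=nt$ are integers with $b>a$; set $\ell:=b-a=n(t-u)\geq 1$. Shifting the indices by $a$ (that is, $j\mapsto a+j'$ and $k\mapsto a+k'$) turns the sum into $n^{-(3-2h)}\sum_{j'=-\infty}^{\ell}\bigl|\sum_{k'=1}^{\ell}v_{k'-j'}(s)\bigr|^2$. Because $|v_m(s)|=(m+1)^{-h}$ for $m\geq 0$ and $v_m(s)=0$ for $m<0$, the triangle inequality discards the complex phase, so it suffices to prove $\Sigma:=\sum_{j'=-\infty}^{\ell}\bigl(\sum_{k'=\max(1,j')}^{\ell}(k'-j'+1)^{-h}\bigr)^2\leq\bigl[\tfrac{2}{(1-h)^2}+\tfrac{1}{2h-1}\bigr]\ell^{3-2h}$; the normalisation then gives the claim since $\ell^{3-2h}=n^{3-2h}|t-u|^{3-2h}$.

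I would split the outer sum at $j'=1$. For $1\leq j'\leq\ell$ only the terms with $k'\geq j'$ survive, and with $q:=\ell-j'+1$ the inner sum equals $\sum_{m=1}^{q}m^{-h}\leq\int_0^q x^{-h}\,dx=\tfrac{q^{1-h}}{1-h}$. Squaring and using the deliberately crude bound $\sum_{q=1}^{\ell}q^{2-2h}\leq\ell\cdot\ell^{2-2h}=\ell^{3-2h}$ bounds this block by $\tfrac{1}{(1-h)^2}\ell^{3-2h}$. For $j'\leq 0$ every $k'$ contributes; writing $r:=1-j'\geq 1$ and comparing with $\int_0^{\ell}(x-j'+1)^{-h}\,dx$, the inner sum is at most $\tfrac{(\ell+r)^{1-h}-r^{1-h}}{1-h}$, so this block is at most $\tfrac{1}{(1-h)^2}\sum_{r=1}^{\infty}\bigl[(\ell+r)^{1-h}-r^{1-h}\bigr]^2$.

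The decisive point is estimating this last series with the exact constant, for which I would use two different per-term bounds. On the head $1\leq r\leq\ell$, subadditivity of $x\mapsto x^{1-h}$ on $[0,\infty)$ gives $(\ell+r)^{1-h}-r^{1-h}\leq\ell^{1-h}$, so these $\ell$ terms contribute at most $\ell\cdot\ell^{2-2h}=\ell^{3-2h}$. On the tail $r>\ell$, the mean value theorem gives $(\ell+r)^{1-h}-r^{1-h}\leq(1-h)\,\ell\,r^{-h}$, whence $\sum_{r>\ell}\bigl[(\ell+r)^{1-h}-r^{1-h}\bigr]^2\leq(1-h)^2\ell^2\int_{\ell}^{\infty}x^{-2h}\,dx=\tfrac{(1-h)^2}{2h-1}\ell^{3-2h}$, the integral being finite precisely because $2h>1$. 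Thus the series is at most $\bigl[1+\tfrac{(1-h)^2}{2h-1}\bigr]\ell^{3-2h}$; dividing by $(1-h)^2$ and adding the contribution $\tfrac{1}{(1-h)^2}\ell^{3-2h}$ from the block $1\leq j'\leq\ell$ yields exactly $\bigl[\tfrac{2}{(1-h)^2}+\tfrac{1}{2h-1}\bigr]\ell^{3-2h}$, as required.

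I expect the only real obstacle to be this constant bookkeeping. A single uniform integral comparison, or a uniform use of the mean value theorem, introduces spurious factors such as $(\ell+1)^{3-2h}$ or powers of $2$ and $3$, which overshoot the coefficient $\tfrac{2}{(1-h)^2}$. Matching the constant exactly forces the split of the series into a subadditivity regime on $r\leq\ell$ and a mean-value regime on $r>\ell$, together with the crude phase-free bound $\sum_{q=1}^{\ell}q^{2-2h}\leq\ell^{3-2h}$ on the triangular block; checking that these pieces tile the target constant without waste is the delicate step, while the individual integral estimates are routine.
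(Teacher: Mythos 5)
Your proof is correct. The opening reduction is exactly the extension step the paper itself prescribes: the triangle inequality replaces $|v_m(s)|$ by $(m+1)^{-h(s)}$, collapsing the complex-valued statement to a real-variable estimate. Beyond that, the paper gives no argument at all --- it defers the real case to \citet{Chara_Rack_Operator} --- so your contribution is to supply the deterministic estimate in full. Your split of the outer sum at $j'=1$, the integral comparison $\sum_{m=1}^{q}m^{-h}\leq \tfrac{q^{1-h}}{1-h}$ on the triangular block, and the two-regime treatment of $\sum_{r\geq 1}\bigl[(\ell+r)^{1-h}-r^{1-h}\bigr]^{2}$ (subadditivity of $x\mapsto x^{1-h}$ for $r\leq\ell$, the mean value theorem plus $\int_{\ell}^{\infty}x^{-2h}\,dx=\tfrac{\ell^{1-2h}}{2h-1}$ for $r>\ell$) all check out, and the pieces do assemble to exactly $\bigl[\tfrac{2}{(1-h)^{2}}+\tfrac{1}{2h-1}\bigr]\ell^{3-2h}$ with $\ell=n(t-u)$, which is the stated constant after dividing by $n^{3-2h}$. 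The only caveat is that since the paper's proof is a citation, one cannot certify that your decomposition coincides with the one in the reference; but as a self-contained verification of the lemma it is complete and the constant bookkeeping, which you rightly identify as the delicate point, is done without slack.
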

We refer to \citet{Chara_Rack_Operator}) for the proofs. They may be easily extended using the triangular inequality and the inequalities (\ref{inequality_realpart_c(s)}) and (\ref{inequality_c(s)_in_dependence_of_h}) as we did so far.
\end{proof}

  \bibliographystyle{agsm}
  \bibliography{bibpaper}
  
\end{document}